\newif\ifdviwin
\newtheorem{theorem}{Theorem}[section]
\newtheorem{lemma}[theorem]{Lemma}
\newtheorem{corollary}[theorem]{Corollary} 
\newtheorem{proposition}[theorem]{Proposition}
\newtheorem*{question}{Question}
\theoremstyle{definition}
\newtheorem{definition}[theorem]{{Definition}}
\newtheorem{example}[theorem]{Example}
\newtheorem{remark}[theorem]{Remark}
\newtheorem{notation}[theorem]{Notation}
\theoremstyle{remark}
\newtheorem*{claim}{Claim}
\newtheorem*{chunk*}{}
\numberwithin{equation}{theorem}
\newcommand{\N}{\mathcal{N}}
\newcommand{\Nrq}{\mathcal{N}^r_q}
\newcommand{\st}{\colon}
\newcommand{\pd}{\mathrm{pd}}
\newcommand{\Taylor}{\mathrm{Taylor}}
\newcommand{\m}{\mathbf{m}}
\newcommand{\bd}{\mathbf{d}}
\newcommand{\f}{\mathbf{f}}
\newcommand{\lcm}{\mathrm{lcm}}
\newcommand{\LCM}{\mathrm{LCM}}
\newcommand{\D}{\Delta}
\newcommand{\LL}{\mathbb{L}}
\newcommand{\Lrq}{\mathbb{L}^r_q}
\newcommand{\Lr}{\mathbb{L}^r}
\newcommand{\tuple}[1]{\langle #1 \rangle}
\newcommand{\Tor}{\mathrm{Tor}}
\newcommand{\E}{\mathcal{E}}
\newcommand{\Erq}{{\E_q}^r}
\newcommand{\NN}{\mathbb{N}}
\newcommand{\ZZ}{\mathbb{Z}}
\newcommand{\ba}{\mathbf{a}}
\newcommand{\bb}{\mathbf{b}}
\newcommand{\bc}{\mathbf{c}}
\newcommand{\be}{\mathbf{e}}
\newcommand{\bm}{\mathbf{m}}
\newcommand{\bma}{\mathbf{m}^\ba}
\newcommand{\bmb}{\mathbf{m}^\bb}
\newcommand{\qwhere}{\quad \mbox{ where } \quad }
\newcommand{\qand}{\quad \mbox{ and } \quad }
\newcommand{\qbut}{\quad \mbox{ but } \quad }
\newcommand{\qforsome}{\quad \mbox{ for some } \quad }
\newcommand{\qforall}{\quad \mbox{ for all } \quad }
\newcommand{\qforeach}{\quad \mbox{ for each } \quad }
\newcommand{\e}{\epsilon}
\newcommand{\pme}{{\pmb{\e}}}
\newcommand{\pmea}{\pme^{\ba}}
\newcommand{\pmeb}{\pme^{\bb}}
\newcommand{\pmec}{\pme^{\bc}}
\newcommand{\sfk}{\mathsf k}
\begin{document}
\bibliographystyle{amsplain}

\author[S.~M.~Cooper]{Susan M. Cooper}
\address{Department of Mathematics\\
University of Manitoba\\
520 Machray Hall\\
186 Dysart Road\\
Winnipeg, MB\\
Canada R3T 2N2}
\email{susan.cooper@umanitoba.ca}

\author[S.~El Khoury]{Sabine El Khoury}
\address{Department of Mathematics,
American University of Beirut,
Bliss Hall 315, P.O. Box 11-0236,  Beirut 1107-2020,
Lebanon}
\email{se24@aub.edu.lb}

\author[S.~Faridi]{Sara Faridi}
\address{Department of Mathematics \& Statistics\\
Dalhousie University\\
6316 Coburg Rd.\\
PO BOX 15000\\
Halifax, NS\\
Canada B3H 4R2}
\email{faridi@dal.ca}

\author[S~Mayes-Tang]{Sarah Mayes-Tang}
\address{Department of Mathematics\\
University of Toronto\\
40 St. George Street, Room 6290\\
Toronto, ON \\
Canada M5S 2E4}
\email{smt@math.toronto.edu}

\author[S.~Morey]{Susan Morey}
\address{Department of Mathematics\\
Texas State University\\
601 University Dr.\\
San Marcos, TX 78666\\E.S.A.}
\email{morey@txstate.edu}

\author[L.~M.~\c{S}ega]{Liana M.~\c{S}ega}
\address{Division of Computing, Analytics and Mathematics, 
University of Missouri-Kansas City, Kansas City, MO 64110 U.S.A.}
\email{segal@umkc.edu}

\author[S.~Spiroff]{Sandra Spiroff }
\address{Department of Mathematics,
University of Mississippi,
Hume Hall 335, P.O. Box 1848, University, MS 38677
USA}
\email{spiroff@olemiss.edu}

\keywords{powers of ideals; simplicial complex; Betti numbers; free
  resolutions; monomial ideals; extremal ideals}

\subjclass[2010]{13D02; 13F55}

\title{Simplicial resolutions of powers of square-free monomial ideals}

\begin{abstract}
 The Taylor resolution is almost never minimal for powers of monomial
 ideals, even in the square-free case.  In this paper we introduce a
 smaller resolution for each power of any square-free monomial ideal,
 which depends only on the number of generators of the ideal. More
 precisely, for every pair of fixed integers $r$ and $q$, we construct
 a simplicial complex that supports a free resolution of the $r^{th}$
 power of {\it any} square-free monomial ideal with $q$ generators. The
 resulting resolution is significantly smaller than the Taylor
 resolution, and is minimal for special cases.  Considering the
 relations on the generators of a fixed ideal allows us to further
 shrink these resolutions. We also introduce a
 class of ideals called ``extremal ideals,'' and show
 that the Betti numbers of powers of all square-free monomial ideals
 are bounded by Betti numbers of powers of extremal ideals.  Our
 results lead to upper bounds on Betti numbers of powers of any
 square-free monomial ideal that greatly improve the binomial bounds
 offered by the Taylor resolution.

\end{abstract} 

\maketitle

\section{Introduction}
\label{sec:introduction}
Important insight about the underlying structure of an ideal in a
polynomial ring is gained from a careful analysis of its minimal free
resolution.  As such, significant effort has gone into the development
of methods to compute resolutions.  The approach of leveraging
connections between commutative algebra and other fields, such as
combinatorics and topology, has proven to be quite fruitful.  Diana
Taylor's thesis~\cite{T} initiated the exploration of these
connections, followed by simplicial resolutions (Bayer, Peeva, and
Sturmfels \cite{BPS}), polytopal complexes (Nagel and Reiner
\cite{NR}), and cellular complexes (Bayer and Sturmfels \cite{BS}), to
name just a few. See \cite{MS,OW} for an overview of these developments.

The Taylor resolution is powerful: given \textit{any} ideal $I$
minimally generated by $q$ monomials, Taylor constructed a simplicial
complex $\Taylor(I)$ by labeling the vertices of a $(q-1)$-simplex with
the monomial generators of $I$. She showed that this complex {\bf
  supports} a free resolution of $I$, in the sense that its simplicial
chain complex can be transformed, via a process called {\it
  homogenization}, to a free resolution of $I$, called the {\bf Taylor
  resolution} of $I$.
Unfortunately, even though every monomial ideal has a Taylor
resolution, the Taylor resolution is often far from minimal.  In
particular, for powers of ideals it is almost never minimal due to certain syzygies that are automatically created when taking powers.

The central theme of this paper is to find an analogue for the Taylor
complex for powers of square-free monomial ideals.
We seek a construction that takes the automatically generated non-minimal syzygies 
into account and removes them from the Taylor resolution to produce a
much smaller free resolution of $I^r$ that works for any monomial ideal $I$. Our
ultimate goal is to find a uniform combinatorial structure that
depends only on the number of generators and the power of the ideal.
More precisely, the question at the heart of this paper is the
following:

\begin{question} Given positive integers $r$ and $q$, is it possible to
  find a simplicial complex (considerably) smaller than the simplex
  $\Taylor(I^r)$ that supports a free resolution of $I^r$, where $I$
  is any ideal generated by $q$ monomials in a polynomial ring?
\end{question}

When $r=1$, $\Taylor(I)$ is in fact the optimal answer to the question
above, as there are ideals $I$ for which $\Taylor(I)$ supports a
minimal resolution.  But when $r=2$, the resolution supported on
$\Taylor(I^2)$ is never minimal for any non-principal square-free
monomial ideal $I$~(\cite{L2}).  As expected, the resolution supported on $\Taylor(I^r)$
becomes further from minimal as $r$ grows.

Although Taylor's complex for the $r^{th}$ power of a monomial ideal
with $q$ generators can be quite large - a simplex of dimension
${q+r-1}\choose{r}$ - we can improve the situation considerably by
studying the general relations among the generators of $I^r$ that must
always exist for all monomial ideals regardless of the generating set
for $I$.  Similar to the idea used in Lyubeznik's
resolutions~\cite{L}, in the case of square-free monomial ideals, this
investigation involves detecting and trimming redundant faces of the
Taylor complex $\Taylor(I^r)$, bringing us closer to a minimal
resolution.

To illustrate our underlying process, let $r=2, q=3$ and consider {\it
  any} ideal $I = (m_1, m_2, m_3)$ in the polynomial ring $\sfk[x_1,
  \ldots, x_n]$ where $m_1, m_2$ and $m_3$ are minimal, square-free
monomial generators and $\sfk$ is a field.  Now $I^2 = ({m_1}^2,
{m_2}^2, {m_3}^2, m_1m_2, m_1m_3, m_2m_3)$ and $\Taylor(I^2)$ is a
$5$-dimensional simplex with $6$ vertices, where each vertex is
labeled by a generator of $I^2$ and each face is labeled with the
least common multiple of its vertices. A non-minimal syzygy occurs
when a face and a subface have the same label (see \cref{t:BPS}). When
considering $I^2$, no matter what the monomial generators of $I$ are,
when $i, j, k$ are distinct we always have the following: $$
\lcm({m_i}^2, {m_j}^2)=\lcm({{m_i}^2,m_im_j,{m_j}^2}) \qand
\lcm({m_i}^2, m_jm_k)=\lcm({m_i}^2, m_jm_k,m_im_k).$$ These equalities
lead to non-minimal syzygies in the Taylor resolution of $I^2$, and as
a result the edges
$$\{{m_1}^2, {m_2}^2\}, \{{m_1}^2, {m_3}^2\}, \{{m_2}^2, {m_3}^2\},
\{{m_1}^2, m_2m_3\}, \{{m_2}^2, m_1m_3\}, \{{m_3}^2, m_1m_2\},$$ and
all faces containing these edges, can be removed from
$\Taylor(I^2)$. The resulting $2$-dimensional subcomplex $\LL_3^2$ of
$\Taylor(I^2)$ supports a resolution of $I^2$~(\cite{L2}). This
simple observation has a considerable impact on bounding the Betti numbers of
$I^2$. For example, in this small case we can conclude that for every
ideal $I$ with three square-free monomial generators, the projective
dimension of $I^2$ is at most $2$ (the dimension of $\LL_3^2$), versus
the bound $5$ (the dimension of $\Taylor(I^2)$).

In this paper we show that a similar argument can be made more
generally.  If $I$ is minimally generated by square-free monomials
$m_1,\ldots,m_q$, then we can identify faces of the simplex
$\Taylor(I^r)$ which lead to non-minimal syzygies for $I^r$.
Eliminating these faces results in a much smaller subcomplex of
$\Taylor(I^r)$, which we call $\Lrq$ (\cref{d:Lrq}).

Our main result, \cref{t:main}, shows that $\Lrq$ supports a
resolution of $I^r$.  This echoes the power of Taylor's complex in
that we have a topological structure depending solely on $r$ and $q$
that supports a resolution of $I^r$ for \textit{any} square-free
monomial ideal $I$ with $q$ minimal generators.
By further deleting redundancies specific to the generators of $I$, we obtain a subcomplex $\Lr(I)$ of $\Lrq$, so that we have
$$\Lr(I) \subseteq \Lrq \subseteq \Taylor(I^r).$$ \cref{t:main} also shows
that $\Lr(I)$ supports a free resolution of $I^r$.  Our approach
to proving this involves showing that $\Lr(I)$ and $\Lrq$ are both
\textit{quasi-trees} (see \cref{d:defs}), meaning (roughly) that are
built from a special ordering on their facets. In
\cref{t:res-by-quasitree}, we show that for a quasi-tree to support a
free resolution one only needs to check that certain induced subcomplexes are
connected. This extends the main result of~\cite{F14}.

Our complex $\Lr(I)$ gives natural and useful bounds on homological
information of $I^r$.  Indeed, the Betti numbers of $I^r$ are bounded
in terms of the number and size of faces of $\Lr(I)$, yielding bounds that are significantly
smaller than those given by the Taylor resolution.  
In the later sections of this paper we examine just how much smaller these bounds on the Betti numbers are and when the resolutions obtained are minimal. 
We define a class of square-free monomial ideals, which we call \textit{extremal ideals}, whose Betti numbers of powers bound the Betti numbers of powers of any square-free monomial ideal with the same number of generators. As a result, we reduce the question of minimality of $\Lrq$ to the study of when  $\Lrq$ supports a minimal free resolution of the $r^{th}$ power of an extremal ideal. 

To put this work in context of the broader literature, studying powers of ideals and bounding their invariants has received much attention in recent years. Powers play an important role in Rees algebras and associated graded rings among other uses, making understanding their behavior desirable but difficult. In another direction, there has been considerable interest in describing minimal topological resolutions for all monomial ideals  using a variety of methods, such as using chain maps from multiple simplicial complexes (see \cite{EMO, Tch}). In this paper we combine the two interests and seek, for powers of monomial ideals,  resolutions that are supported on a single topological structure which is practical to determine based on the generators of the original ideal $I$.

The paper is organized as follows. \cref{sec:background} contains
basics of simplicial resolutions. In \cref{sec:res-by-qtrees} we use
simplicial collapsing and the Bayer--Peeva--Sturmfels criterion to
prove the above-mentioned criterion for quasi-trees
(\cref{t:res-by-quasitree}).  In \cref{sec:Lrq} we introduce the
definition of the simplicial complex $\Lrq$, and we prove in
\cref{p:Lr-quasitree} that $\Lrq$ is a quasi-tree.  In \cref{sec:LrI}
we define $\Lr(I)$, discuss some examples, and then prove the main
result, \cref{t:main}. \cref{s:bounds} investigates the bounds on the
Betti numbers of $I^r$ that follow from the main result.  Finally,
\cref{s:min} introduces extremal ideals, which have maximal Betti
numbers among powers of square-free monomial ideals. In particular,
\cref{p:last} provides a full characterization of the conditions on
$r$ and $q$ that guarantee $\Lrq$ supports a minimal free resolution
of $I^r$ for some ideal $I$.

This paper is an extension of the work in \cite{L2} where
the focus is on the second power of the ideal $I$.  The collaboration
was initiated at the 2019 workshop ``Women in Commutative Algebra''
hosted by the Banff International Research Station.

\section{Simplicial Resolutions}
\label{sec:background}

Fix $S=\sfk[x_1,\ldots,x_n]$ to be a polynomial ring over a
        field $\sfk$.  We begin by reviewing necessary background for
        simplicial complexes and simplicial resolutions and then
        demonstrate the potential relationship to resolutions of
        ideals.

A {\bf simplicial complex} $\D$ on a {\bf vertex set } $V$ is a set
of subsets of $V$ such that if $F \in \D$ and $G \subseteq F$ then $G
\in \D$.  We use the following terminology for simplicial complexes:

\begin{definition}  Let $\D$ be a simplicial complex.
\begin{enumerate}
\item An element of $\D$ is called a {\bf face}.
\item The {\bf facets} of $\D$ are the maximal faces under inclusion.
\item  The {\bf dimension} of a face $F \in \D$ is $\dim(F)=|F|-1$.
\item The {\bf dimension} of $\D$ is the maximum of the dimensions of its faces.
\item $\D$ is called a {\bf simplex} if it has one facet.
\item The {\bf $\f$-vector} $\f(\D)=(f_0,\ldots,f_d)$ of a
$d$-dimensional simplicial complex $\D$ has  $f_i=$
the number of $i$-dimensional faces of $\D$. 
\end{enumerate}
\end{definition}

Note that a simplicial complex can be uniquely determined by its facets.  One writes
$$\D = \tuple{F_1, \ldots, F_q}$$
to denote a simplicial complex $\D$ with facets $F_1,\ldots,F_q$.

In subsequent sections, we will use the tool of trimming
        simplicial complexes via certain rules.  Essentially, we will
        delete vertices in a specified fashion.  Vertex deletions
        naturally lead to the consideration of subcomplexes of
        simplicial complexes, which are defined below, together with
        additional structures and notions that will be used throughout
        the paper.

\begin{definition}
\label{d:defs} Let $\D$ be a simplicial complex on a vertex set $V$.
\begin{enumerate}
\item If $v$ is a vertex of $\D$, then the {\bf deletion} of $v$ from
  $\D$ is the simplicial complex
$$\D \setminus \{v\} = \{ \sigma \in \D \mid v \not \in \sigma\}.$$

\item A {\bf subcomplex} of $\D$ is a subset of $\D$ which is also a
  simplicial complex.

\item Given $W \subseteq V$, the {\bf induced subcomplex} of $\D$ on
  $W$ is the subcomplex
$$\D_W=\{ \sigma \in \D \mid \sigma \subseteq W\}.$$

\item A {\bf leaf}~\cite{F02} is a facet $F$ of $\D$ such that $F$ is
the only facet of $\D$, or there is a facet $G$ of $\D$ with $F\ne G$
such that
$$F\cap H \subseteq G$$ for all facets $H \neq F$. The facet $G$ is
called a {\bf joint} of $F$. (Note that the joint of a leaf need not
be unique (see \cite{F02}).

\item $\D$ is called a {\bf quasi-forest}~\cite{Z} if the facets of
$\D$ can be ordered as $F_1,\ldots, F_q$ such that for
$i=1,\ldots,q$, the facet $F_i$ is a leaf of the simplicial complex
$\tuple{F_1,\ldots,F_i}$.

\item $\D$ is a {\bf quasi-tree} if it is a connected quasi-forest.
\end{enumerate}
\end{definition}

\begin{example} \label{e:quasitrees1}
The simplicial complex below is a quasi-tree.  The leaf order is
$F_1, \ldots , F_5$, meaning that each $F_i$ is a leaf of $\langle
F_1, \dots, F_i \rangle$.  In this example the joint of $F_i$ is
$F_{i-1}$ for all $i \geq 1$. 

$$
\begin{tikzpicture}
\coordinate (A) at (0, 0);
\coordinate (B) at (1, 1);
\coordinate (C) at (2, 0);
\coordinate (D) at (3, 1);
\coordinate (E) at (4, 0);
\coordinate (F) at (5, 1);
\coordinate (G) at (6, 0);
\coordinate (H) at (7, 1);
\draw  [fill=gray!20] (B) -- (C) -- (A) -- cycle;
\draw  [fill=gray!20] (D) -- (C) -- (E) -- cycle;
\draw  [fill=gray!20] (D) -- (E) -- (F) -- cycle;
\draw  [fill=gray!20] (G) -- (E) -- (F) -- cycle;
\draw[-] (C) -- (D);
\draw[-] (D) -- (E);
\draw[-] (E) -- (F);
\draw[-] (C) -- (E);
\draw[-] (F) -- (D);
\draw[-] (F) -- (G);
\draw[-] (G) -- (E);
\draw[-] (G) -- (H);
\pgfputat{\pgfxy(.8,.4)}{\pgfbox[left,center]{$F_1$}}
\pgfputat{\pgfxy(2.8,.4)}{\pgfbox[left,center]{$F_2$}}
\pgfputat{\pgfxy(3.8,.6)}{\pgfbox[left,center]{$F_3$}}
\pgfputat{\pgfxy(4.8,.4)}{\pgfbox[left,center]{$F_4$}}
\pgfputat{\pgfxy(6.6,.4)}{\pgfbox[left,center]{$F_5$}}
\draw[black, fill=black] (A) circle(0.04);
\draw[black, fill=black] (B) circle(0.04);
\draw[black, fill=black] (C) circle(0.04);
\draw[black, fill=black] (D) circle(0.04);
\draw[black, fill=black] (E) circle(0.04);
\draw[black, fill=black] (F) circle(0.04);
\draw[black, fill=black] (G) circle(0.04);
\draw[black, fill=black] (H) circle(0.04);
\end{tikzpicture}
$$
\end{example}

\begin{example} \label{e:quasitrees2}
The star-shaped complex drawn below on the left is a quasi-tree,
with leaf order $F_0, F_1, F_2, F_3$.  In particular, the center
facet $F_0$ is the joint of $F_i$ for every $i \geq 1$.  This complex
is a standard example of a quasi-tree which is not a simplicial
tree in the sense of~\cite{F02}. This particular quasi-tree is
shown in \cite{L2} to support a free resolution of the second
power of any ideal with three square-free monomial generators.

If one removes $F_0$ from the center, the remaining complex is shown
in the picture on the right. This simplicial complex is not a
quasi-tree, since no facet is a leaf.

$$
\begin{tabular}{ccc}
\begin{tikzpicture}
\tikzstyle{point}=[inner sep=0pt]
\coordinate (a) at (0,1); 
\coordinate (b) at (-1,0);
\coordinate (c) at (1,0) ;
\coordinate (d) at (-1.5,1.5);
\coordinate (e) at (1.5,1.5) ;
\coordinate (f) at (0,-1) ;
\draw [fill=gray!20](a.center) -- (b.center) -- (c.center);
\draw [fill=gray!20](a.center) -- (b.center) -- (d.center);
\draw [fill=gray!20](a.center) -- (c.center) -- (e.center);
\draw [fill=gray!20](b.center) -- (c.center) -- (f.center);
\draw (a.center) -- (b.center);
\draw (a.center) -- (c.center);
\draw (a.center) -- (d.center);
\draw (a.center) -- (e.center);
\draw (b.center) -- (c.center);
\draw (b.center) -- (d.center);
\draw (b.center) -- (f.center);
\draw (c.center) -- (e.center);
\draw (c.center) -- (f.center);
\pgfputat{\pgfxy(-.9,.8)}{\pgfbox[left,center]{$F_1$}}
\pgfputat{\pgfxy(.7,.8)}{\pgfbox[left,center]{$F_2$}}
\pgfputat{\pgfxy(-.1,-.4)}{\pgfbox[left,center]{$F_3$}}
\pgfputat{\pgfxy(-.1,.4)}{\pgfbox[left,center]{$F_0$}}
\draw[black, fill=black] (a) circle(0.04);
\draw[black, fill=black] (b) circle(0.04);
\draw[black, fill=black] (c) circle(0.04);
\draw[black, fill=black] (d) circle(0.04);
\draw[black, fill=black] (e) circle(0.04);
\draw[black, fill=black] (f) circle(0.04);
\end{tikzpicture}& \quad \quad & 
\begin{tikzpicture}
\tikzstyle{point}=[inner sep=0pt]
\coordinate (a) at (0,1); 
\coordinate (b) at (-1,0);
\coordinate (c) at (1,0) ;
\coordinate (d) at (-1.5,1.5);
\coordinate (e) at (1.5,1.5) ;
\coordinate (f) at (0,-1) ;
\draw [fill=gray!20](a.center) -- (b.center) -- (d.center);
\draw [fill=gray!20](a.center) -- (c.center) -- (e.center);
\draw [fill=gray!20](b.center) -- (c.center) -- (f.center);
\draw (a.center) -- (b.center);
\draw (a.center) -- (c.center);
\draw (a.center) -- (d.center);
\draw (a.center) -- (e.center);
\draw (b.center) -- (c.center);
\draw (b.center) -- (d.center);
\draw (b.center) -- (f.center);
\draw (c.center) -- (e.center);
\draw (c.center) -- (f.center);
\pgfputat{\pgfxy(-.9,.8)}{\pgfbox[left,center]{$F_1$}}
\pgfputat{\pgfxy(.7,.8)}{\pgfbox[left,center]{$F_2$}}
\pgfputat{\pgfxy(-.1,-.4)}{\pgfbox[left,center]{$F_3$}}
\draw[black, fill=black] (a) circle(0.04);
\draw[black, fill=black] (b) circle(0.04);
\draw[black, fill=black] (c) circle(0.04);
\draw[black, fill=black] (d) circle(0.04);
\draw[black, fill=black] (e) circle(0.04);
\draw[black, fill=black] (f) circle(0.04);
\end{tikzpicture}\\
\mbox{quasi-tree} & & \mbox{not a quasi-tree}\\
\end{tabular}
$$
\end{example}

A {\bf free resolution} of $I$ 
is an exact sequence of the form 
$$0 \rightarrow S^{\beta_t} \rightarrow S^{\beta_{t-1}} \rightarrow
\cdots \rightarrow S^{\beta_1} \rightarrow S^{\beta_0} \rightarrow I
\rightarrow 0,$$ where $S^{\beta_j}$ is a free $S$-module of rank
$\beta_j$ and $t \in \mathbb N$. When $\beta_j$ is the smallest
possible rank of a free module in the $j^{th}$ spot of any free
resolution of $I$ for each $j$, the resolution is {\bf minimal}.  In
this case, the numbers $\beta_j$ are invariants of $I$ and are called
the {\bf Betti numbers} of $I$.  

In the 1960s, Diana Taylor demonstrated a striking connection between a
$(q-1)$-simplex and a resolution of a monomial ideal $I$ in $S$.  If $I$
is a monomial ideal in $S$ minimally generated by monomials
$m_1,\ldots,m_q$, then $\Taylor(I)$ denotes the simplex with $q$
vertices indexed by the set $[q]=\{1,\dots,q\}$, where each vertex $i$ is labeled with one of the
monomials $m_i$, and each face $\sigma$ is labeled with the monomial
$$M_\sigma=\lcm(m_i \st i \in \sigma).$$

In her Ph.D thesis~\cite{T}, Taylor proved that the simplicial chain
complex of $\Taylor(I)$ gives rise to a multigraded free resolution of
$I$. In particular, the $i^{th}$ Betti number of $I$ is bounded above
by the number of $i$-dimensional faces of $\Taylor(I)$, which is
${q}\choose{i+1}$.  This method has been generalized so that if
$\D$ is a simplicial or cellular complex whose vertices are
labeled with the monomial generators $m_1, \ldots , m_q$ of an ideal
$I$ and whose faces are labeled with the least common multiple of the
vertex labels as above, then we say that $\D$ {\bf supports a free
resolution} of $I$ if the homogenization of the  simplicial (or cellular) chain complex of $\D$ is a multi-graded free resolution of $I$, denoted by $\mathbb F_{\D}$, see \cite{BPS, BS}. The multi-graded complex $\mathbb F_{\D}$ is described as follows. For each $t\ge 0$, the free module $(\mathbb F_{\D})_t$ has basis elements denoted by $e_\sigma$, where $\sigma$ ranges over all faces of  $\D$ with $|\sigma|=t+1$, and $e_{\sigma}$ is considered to have multi-degree $M_{\sigma}$. The differential is described by
\begin{equation}
\label{e:Taylor-diff}
\partial(e_\sigma)=\sum_{j=0}^t(-1)^j\frac{M_{\sigma}}{M_{\sigma\smallsetminus\{v_{i_j}\}}} e_{\sigma\smallsetminus\{v_{i_j}\}}\,,
\end{equation}
where $\sigma=\{v_{i_0}, \dots, v_{i_t}\}$ with $i_0 < i_1 < \cdots < i_t$.

\begin{example}\label{e:homogenize} 

Let $I = (xy, yz, zu)$ in $R=\sfk[x,y,z,u]$. The labeled simplicial chain complex 
$$\begin{tikzpicture}
\tikzstyle{point}=[inner sep=0pt]
\node (a)[point,label=left:$xy$] at (-2,0) {\tiny{$\bullet$}};
\node (b)[point,label=above:$yz$] at (0,0) {\tiny{$\bullet$}};
\node (c)[point,label=right:$zu$] at (2,0) {\tiny{$\bullet$}};

\draw (a.center) -- (b.center);
\draw (b.center) -- (c.center);

\pgfputat{\pgfxy(-1.2,.15)}{\pgfbox[left,center]{$xyz$}}
\pgfputat{\pgfxy(.8,.15)}{\pgfbox[left,center]{$yzu$}}

\end{tikzpicture}
$$
supports a free resolution of $I$. The chain complex of $\D$ is 
$$ 0 \longrightarrow \sfk^2 \xrightarrow{\scriptsize \begin{bmatrix}1&0\\-1&1\\0&-1\end{bmatrix}}\sfk^3 \rightarrow \sfk$$
and homogenization results in the free resolution 
$$ 0 \longrightarrow R(xyz) \oplus R(yzu) \xrightarrow{\scriptsize \begin{bmatrix}z&0\\-x&u\\0&-y\end{bmatrix}}\ R(xy) \oplus R(yz) \oplus R(zu) \longrightarrow I  \longrightarrow  0\,,$$
where the notation $R(x^ay^bz^cu^d)$ refers to the $R$-free module with one generator in multi-degree $(a,b,c,d)$. 
\end{example}

\begin{remark} \label{r:larger-Taylor}
Simplicial complexes that support a
free resolution of a monomial ideal are usually constructed such that the
vertices correspond to and are labeled by a {\it minimal} set of
generators of the ideal. However, one can also work with non-minimal
generators, at the expense of producing a larger complex. In
particular, one can mimic the construction of $\Taylor(I)$, but use
instead any set of monomial generators of $I$. The same
considerations show that this complex supports a free resolution of
$I$.
\end{remark}

There are various combinatorial ways to build subcomplexes
        $\D$ of $\Taylor(I)$ that support a free resolution of
        $I$. One such well known complex is the {\it Lyubeznik
          complex}, which supports the {\it Lyubeznik resolution} of
        $I$ (\cite{L}). The Lyubeznik resolution is the main
        inspiration for the complexes $\Lrq$ and $\Lr(I)$ which appear
        later in this paper.  We will not define this resolution since
        it is not used in this paper, but refer the reader to \cite{L,M}
        for additional information.

\section{Resolutions supported on quasi-trees}
\label{sec:res-by-qtrees}
 Taylor's resolution is usually far from
        minimal. However, for a given monomial ideal $I$, a
        criterion of Bayer, Peeva and Sturmfels (see \cref{t:BPS})
        allows one to check if a subcomplex of $\Taylor(I)$ supports a
        free resolution of $I$. In this section, using the above
        criterion and by observing that quasi-trees are collapsible,
        we show that a quasi-tree $\D$ supports a free resolution
        of a given monomial ideal if and only if certain subcomplexes
        of $\D$ are connected.

        For a subcomplex $\D$ of $\Taylor(I)$ and a
        monomial $M$ in $S$, let $\D_M$ be the subcomplex of $\D$
        induced on the vertices of $\D$ whose labels divide $M$.

        The following is the criterion of Bayer, Peeva and Sturmfels
        \cite[Lemma 2.2]{BPS}; see also \cite[Theorem 2.2]{E} for the
        statement on minimality.

\begin{theorem}[{\bf Criterion for a simplicial complex to support a free  resolution}]\label{t:BPS}
          Let $\D$ be a simplicial complex whose vertices are labeled
          with a monomial generating set of a monomial ideal
          $I$  in a polynomial ring $S$ over a field.
          Then $\D$ supports a free resolution of $I$ over $S$ if and
          only if for every monomial $M$, the induced subcomplex
          $\D_M$ of $\D$ on the vertices whose labels divide $M$ is
          empty or acyclic.

Furthermore, a free resolution supported on $\D$ is
                minimal if and only if $M_{\sigma} \neq M_{\sigma'}$
                for every proper subface $\sigma'$ of a face $\sigma$
                of $\D$.
\end{theorem}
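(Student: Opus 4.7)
The plan is to exploit the $\ZZ^n$-grading on $S$ and on the complex $\mathbb F_\D$. By \eqref{e:Taylor-diff} the differential is multi-homogeneous, so for each $\ba \in \ZZ_{\geq 0}^n$ the multi-degree $\ba$ strand $(\mathbb F_\D)_\ba$ is a complex of $\sfk$-vector spaces, and $\mathbb F_\D$ resolves $I$ if and only if each such strand, augmented to the degree-$\ba$ component of $I$, is exact. Since every multi-degree has the form $\ba$ with $\mathbf x^\ba = M$ for some monomial $M$, it suffices to treat the collection of subcomplexes $\D_M$ indexed by monomials.

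The key step is a clean combinatorial identification of the strand $(\mathbb F_\D)_\ba$. The free module $S\cdot e_\sigma$ contributes a one-dimensional $\sfk$-summand in multi-degree $\ba$ precisely when $M_\sigma$ divides $\mathbf x^\ba$, that is, when $\sigma$ is a face of $\D_{\mathbf x^\ba}$. Restricting \eqref{e:Taylor-diff} to this strand, each coefficient $M_\sigma / M_{\sigma\setminus\{v_{i_j}\}}$ collapses to the constant $1$ (with sign $(-1)^j$ preserved), so $(\mathbb F_\D)_\ba$ is isomorphic, as a complex of $\sfk$-vector spaces, to the (appropriately shifted, reduced) simplicial chain complex of $\D_{\mathbf x^\ba}$. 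Setting up this isomorphism while bookkeeping the signs matches the paper's conventions is the main technical hurdle; once done, everything else follows from standard facts.

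Granted the identification, the augmented complex $\mathbb F_\D \to I \to 0$ in multi-degree $\ba$ becomes the augmented simplicial chain complex of $\D_{\mathbf x^\ba}$, with augmentation nonzero exactly when some generator $m_i$ divides $\mathbf x^\ba$, i.e.\ when $\mathbf x^\ba \in I$ and equivalently when $\D_{\mathbf x^\ba}\neq \emptyset$. Vanishing of its homology in positive degrees is precisely acyclicity of $\D_{\mathbf x^\ba}$; when $\mathbf x^\ba \notin I$ the strand is zero and exactness is automatic. Letting $M=\mathbf x^\ba$ and varying $\ba$ over $\ZZ_{\geq 0}^n$ gives the first assertion.

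For the minimality statement, I would invoke the standard fact that a $\ZZ^n$-graded free resolution over $S$ is minimal if and only if every entry of every differential matrix lies in the homogeneous maximal ideal $\mathfrak m = (x_1,\dots,x_n)$. From \eqref{e:Taylor-diff} the nonzero entries are the monomials $M_\sigma / M_{\sigma\setminus\{v\}}$, and such an entry is a unit precisely when $M_{\sigma\setminus\{v\}} = M_\sigma$. A short divisibility argument upgrades this to the stated criterion over all proper subfaces: if $M_{\sigma'} = M_\sigma$ for some $\sigma' \subsetneq \sigma$, then for any $v\in \sigma\setminus\sigma'$ the chain $M_{\sigma'}\mid M_{\sigma\setminus\{v\}}\mid M_\sigma$ forces $M_{\sigma\setminus\{v\}} = M_\sigma$, and the converse is immediate.
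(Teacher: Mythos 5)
Your proof is correct and follows the standard route: the paper does not prove \cref{t:BPS} itself but quotes it from Bayer--Peeva--Sturmfels and Eisenbud, and your identification of the multigraded strand $(\mathbb F_{\D})_{\ba}$ with the reduced simplicial chain complex of $\D_{\mathbf{x}^{\ba}}$ (with the augmentation to $I_{\ba}$ playing the role of the empty face) is exactly the argument given in those sources. The minimality half, including the upgrade from codimension-one subfaces to arbitrary proper subfaces via the chain $M_{\sigma'} \mid M_{\sigma\smallsetminus\{v\}} \mid M_{\sigma}$, is also handled correctly.
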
 

\begin{remark}
The results of \cref{t:BPS} are usually stated with the assumption that one uses the {\it minimal} monomial generating set of $I$ for the labels.  However, the proof of \cite[Lemma 2.2]{BPS} does not make use of this assumption, hence we formulated the result above to reflect this observation. 
\end{remark}

In particular, \cref{t:BPS}  implies that the $\f$-vector of a
complex $\D$ supporting a resolution of a monomial ideal $I$ is an
upper bound for the vector of Betti numbers of $I$. In other words,
for each $i \leq d=\dim(\D)$,
$$\beta_i(I)\leq f_{i} \qwhere \f(\D)=(f_0,\ldots,f_d).$$
In
particular, if $\D$ supports a minimal free resolution of $I$, then equality holds above. 

Using \cref{t:BPS} it is straightforward to see that to
determine whether $\D$ supports a free resolution of $I$, it suffices
to check that $\D_M$ is empty or acyclic only for monomials $M$ in the
lcm lattice of $I$; that is, for monomials $M$ that are least common
multiples of sets of vertex labels.

If the complex $\D$ under consideration in \cref{t:BPS} is a
simplicial tree, then it suffices to show that $\D_M$ is connected, instead of acyclic, see ~\cite{F14}.  More precisely, it is established in~\cite{F14} that
every induced subcomplex of a simplicial tree is  a simplicial
forest, and then it is shown that simplicial trees are acyclic, and hence an induced subcomplex of $\D$ is acyclic if and only if  it is empty or connected (see~\cite[Theorems 2.5, 2.9, 3.2]{F14}).

We now generalize the work in~\cite{F14} by showing that the criterion
in \cref{t:BPS} can be extended to the class of quasi-trees. To do so
we need to argue that quasi-trees, and their connected induced
subcomplexes, are acyclic. We do so using the following series of
results.

\begin{proposition}[{\bf Induced subcomplexes of  quasi-forests are quasi-forests}]\label{p:qtree-induced} If a simplicial complex  $\D$ is a 
quasi-forest, then every induced subcomplex of $\D$ is a quasi-forest.
\end{proposition}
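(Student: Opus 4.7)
The plan is to prove the statement by induction on the number $q$ of facets of $\Delta$. The base case $q = 1$ is immediate since any induced subcomplex of a simplex is a simplex, hence a quasi-forest. For the inductive step, fix a leaf order $F_1, \ldots, F_q$ of $\Delta$, let $F_k$ be the joint of the leaf $F_q$, and set $\Delta' = \langle F_1, \ldots, F_{q-1}\rangle$, which is a quasi-forest with $q - 1$ facets. By the inductive hypothesis, $(\Delta')_W$ is a quasi-forest with some leaf order $G_1, \ldots, G_r$.

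I would then split into two cases. If $F_q \cap W \subseteq F_j \cap W$ for some $j < q$, then $\Delta_W = (\Delta')_W$ and we are done. Otherwise $F_q \cap W$ is a facet of $\Delta_W$, and the remaining facets of $\Delta_W$ are precisely those $G_s$ not contained in $F_q \cap W$. The plan is to construct a leaf order for $\Delta_W$ by removing from the sequence $G_1, \ldots, G_r$ those $G_s$ that are absorbed into $F_q \cap W$ and appending $F_q \cap W$ at the end.

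To verify that this is a leaf order, the leaf property of $F_q$ in $\Delta$ yields the bound $(F_q \cap W) \cap (F_j \cap W) \subseteq F_k \cap W$ for every $j < q$, so every intersection of $F_q \cap W$ with another facet of $\Delta_W$ is controlled by $F_k \cap W$. Since $F_k \cap W$ is a face of $(\Delta')_W$, it lies in some facet of $(\Delta')_W$; provided that facet is not absorbed into $F_q \cap W$, it serves as the joint of $F_q \cap W$ in $\Delta_W$. The preserved facets $G_{s}$, in their inherited order, remain leaves of the truncated complexes they generate because their intersection patterns with earlier facets are unchanged.

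The hard part will be the subcase where every facet of $(\Delta')_W$ containing $F_k \cap W$ is itself contained in $F_q \cap W$; in that situation no facet of $\Delta_W$ distinct from $F_q \cap W$ contains the candidate joint $F_k \cap W$. I expect this to require either verifying that $F_q \cap W$ is in fact the unique facet of $\Delta_W$ (so the leaf property is automatic), or else re-ordering the facets of $\Delta_W$ so that a different facet is placed last, exploiting the fact that in this degenerate case the specific intersections $F_q \cap F_j \cap W$ are strictly smaller than $F_k \cap W$ and can be uniformly absorbed into another facet.
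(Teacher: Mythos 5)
Your overall strategy (direct induction on the number of facets) is genuinely different from the paper's, which simply invokes the characterization of \cite[Proposition 6]{FH}: a simplicial complex is a quasi-forest if and only if every induced subcomplex has a leaf. With that in hand the proposition is a one-line consequence, since for $U\subseteq W$ one has $(\Delta_W)_U=\Delta_U$. Your route could in principle work, but as written it has a genuine gap exactly at the place you flag, and neither of your proposed escapes suffices. Consider the star quasi-tree of \cref{e:quasitrees2}, relabeled so that the leaf order is $F_1=\{a,b,c\}$, $F_2=\{a,b,d\}$, $F_3=\{a,c,e\}$, $F_4=\{b,c,f\}$ (each $F_i$ with $i\ge 2$ has joint $F_1$), and take $W=\{b,c,d,e,f\}$. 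Then $(\Delta')_W=\langle \{b,c\},\{b,d\},\{c,e\}\rangle$ has leaf order $G_1=\{b,c\}$, $G_2=\{b,d\}$, $G_3=\{c,e\}$; the facet $G_1=F_1\cap W$ is absorbed into $F_4\cap W=\{b,c,f\}$, and we are in your ``hard'' subcase. Here $F_4\cap W$ is not the unique facet of $\Delta_W$, and it is not a leaf of $\Delta_W$ at all: its intersections $\{b\}$ and $\{c\}$ with the other two facets lie in no common facet other than $F_4\cap W$ itself, so they cannot be ``uniformly absorbed into another facet'' as you hope. The complex $\Delta_W$ is still a quasi-tree, but only via an order that puts $\{b,c,f\}$ \emph{first}, as the joint of the other two; the needed reordering is therefore not a small perturbation of your scheme, and no argument for its existence is given.

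There is a second, quieter gap: even outside the hard subcase, your claim that the surviving $G_s$, in their inherited order, ``remain leaves of the truncated complexes they generate because their intersection patterns with earlier facets are unchanged'' is not justified. The joint of $G_s$ in the leaf order for $(\Delta')_W$ may itself be one of the absorbed facets, and deleting facets from a leaf order does not in general preserve the leaf property of the survivors --- this is precisely the phenomenon exhibited in \cref{e:quasitrees2}, where removing the central facet of the star destroys quasi-forestness. I would recommend abandoning the facet-by-facet bookkeeping and instead proving or citing the characterization of \cite{FH}, as the paper does.
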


\begin{proof} By~\cite[Proposition 6]{FH},
a simplicial complex $\D$ with vertex set $V$ is a quasi-forest if
and only if for every subset $W \subseteq V$, the induced subcomplex
$\D_W$ has a leaf.  If $W \subseteq V$, consider the induced
subcomplex $\D_W$ of $\D$. If $U \subseteq W$, then $\D_U=(\D_W)_U$
has a leaf, and hence, $\D_W$ is a quasi-forest.
\end{proof}

A face $\sigma$ of a simplicial complex $\D$ is called a {\bf free
face} if it is properly contained in a unique facet $F$ of
$\D$. A {\bf collapse} of $\D$ along the free face $\sigma$ is the simplicial complex obtained by removing the faces $\tau$ such that $\sigma\subseteq \tau
\subseteq F$ from $\D$.  If additionally $\dim(\sigma)=\dim(F)-1$,
then the collapse is called an {\bf elementary collapse}. The
simplicial complex $\D$ is called {\bf collapsible} if it can be
reduced to a point via a series of (elementary) collapses.

\begin{example}\label{e:L2-picture-collapsing} 
Consider the quasi-tree in \cref{e:quasitrees2}. To illustrate that
this complex is collapsible to a point, we label in
\cref{f:collapsing} the vertices and then demonstrate one collapsing
sequence. At each step, the two faces that play the roles of $\sigma$
and $F$ in the exposition above are indicated. Note that the first two
steps are elementary collapses for demonstration purposes. Alternate
collapsing sequences exist.

\begin{figure} 
$$
\begin{array}{ccc}
\begin{tikzpicture}
\tikzstyle{point}=[inner sep=0pt]
\node (a)[point,label=above:$v_1$] at (0,1) {};
\node (b)[point,label=left:$v_2$] at (-1,0) {};
\node (c)[point,label=right:$v_3$] at (1,0) {};
\node (d)[point,label=left:$v_4$] at (-1.5,1.5) {};
\node (e)[point,label=right:$v_5$] at (1.5,1.5) {};
\node (f)[point,label=below:$v_6$] at (0,-1) {};
\node (g)[label=above: $\xrightarrow{\tiny\begin{array}{c}
\sigma_1 = \{v_1, v_4\} \\F_1 =\{v_1, v_2, v_4\}
\end{array}}$] at (3,-.5) {};
\draw [fill=gray!20](a.center) -- (b.center) -- (c.center);
\draw [fill=gray!20](a.center) -- (b.center) -- (d.center);
\draw [fill=gray!20](a.center) -- (c.center) -- (e.center);
\draw [fill=gray!20](b.center) -- (c.center) -- (f.center);
\draw (a.center) -- (b.center);
\draw (a.center) -- (c.center);
\draw (a.center) -- (d.center);
\draw (a.center) -- (e.center);
\draw (b.center) -- (c.center);
\draw (b.center) -- (d.center);
\draw (b.center) -- (f.center);
\draw (c.center) -- (e.center);
\draw (c.center) -- (f.center);
 \filldraw [black] (a.center) circle (1pt);
 \filldraw [black] (b.center) circle (1pt);
 \filldraw [black] (c.center) circle (1pt);
 \filldraw [black] (d.center) circle (1pt);
 \filldraw [black] (e.center) circle (1pt);
 \filldraw [black] (f.center) circle (1pt);
\end{tikzpicture}

&
\begin{tikzpicture}
\tikzstyle{point}=[inner sep=0pt]
\node (a)[point,label=above:$v_1$] at (0,1) {};
\node (b)[point,label=left:$v_2$] at (-1,0) {};
\node (c)[point,label=right:$v_3$] at (1,0) {};
\node (d)[point,label=left:$v_4$] at (-1.5,1.5) {};
\node (e)[point,label=right:$v_5$] at (1.5,1.5) {};
\node (f)[point,label=below:$v_6$] at (0,-1) {};
\node (g)[label=above:
$\xrightarrow{\tiny
\begin{array}{c}
\sigma_2 = \{v_4\} \\
F_2 = \{v_2 , v_4 \}
\end{array}}$] at (3,-.5) {};
\draw [fill=gray!20](a.center) -- (b.center) -- (c.center);
\draw [fill=gray!20](a.center) -- (c.center) -- (e.center);
\draw [fill=gray!20](b.center) -- (c.center) -- (f.center);
\draw (a.center) -- (b.center);
\draw (a.center) -- (c.center);
\draw (a.center) -- (e.center);
\draw (b.center) -- (c.center);
\draw (b.center) -- (d.center);
\draw (b.center) -- (f.center);
\draw (c.center) -- (e.center);
\draw (c.center) -- (f.center);
 \filldraw [black] (a.center) circle (1pt);
 \filldraw [black] (b.center) circle (1pt);
 \filldraw [black] (c.center) circle (1pt);
 \filldraw [black] (d.center) circle (1pt);
 \filldraw [black] (e.center) circle (1pt);
 \filldraw [black] (f.center) circle (1pt);
\end{tikzpicture} 
&
\\

\begin{tikzpicture}
\tikzstyle{point}=[inner sep=0pt]
\node (a)[point,label=above:$v_1$] at (0,1) {};
\node (b)[point,label=left:$v_2$] at (-1,0) {};
\node (c)[point,label=right:$v_3$] at (1,0) {};
\node (e)[point,label=right:$v_5$] at (1.5,1.5) {};
\node (f)[point,label=below:$v_6$] at (0,-1) {};
\node (g)[label=above:
$\xrightarrow{\tiny\begin{array}{c}
\sigma_3= \{ v_5\}\\ F_3=\{v_1, v_3, v_5 \}
\end{array}}$] at (3,-.5) {};
\draw [fill=gray!20](a.center) -- (b.center) -- (c.center);
\draw [fill=gray!20](a.center) -- (c.center) -- (e.center);
\draw [fill=gray!20](b.center) -- (c.center) -- (f.center);
\draw (a.center) -- (b.center);
\draw (a.center) -- (c.center);
\draw (a.center) -- (e.center);
\draw (b.center) -- (c.center);
\draw (b.center) -- (f.center);
\draw (c.center) -- (f.center);
 \filldraw [black] (a.center) circle (1pt);
 \filldraw [black] (b.center) circle (1pt);
 \filldraw [black] (c.center) circle (1pt);
 \filldraw [black] (e.center) circle (1pt);
 \filldraw [black] (f.center) circle (1pt);
\end{tikzpicture} 

&
\begin{tikzpicture}
\tikzstyle{point}=[inner sep=0pt]
\node (a)[point,label=above:$v_1$] at (0,1) {};
\node (b)[point,label=left:$v_2$] at (-1,0) {};
\node (c)[point,label=right:$v_3$] at (1,0) {};
\node (f)[point,label=below:$v_6$] at (0,-1) {};
\node (g)[label=above:
$\xrightarrow{\tiny\begin{array}{c}
\sigma_4= \{ v_6\}\\ F_4=\{v_2, v_3, v_6 \}
\end{array}}$] at (3,-.5) {};
\draw [fill=gray!20](a.center) -- (b.center) -- (c.center);
\draw [fill=gray!20](b.center) -- (c.center) -- (f.center);
\draw (a.center) -- (b.center);
\draw (a.center) -- (c.center);
\draw (b.center) -- (f.center);
\draw (c.center) -- (f.center);
 \filldraw [black] (a.center) circle (1pt);
 \filldraw [black] (b.center) circle (1pt);
 \filldraw [black] (c.center) circle (1pt);
 \filldraw [black] (f.center) circle (1pt);
\end{tikzpicture} 

&
\\
\begin{tikzpicture}
\tikzstyle{point}=[inner sep=0pt]
\node (a)[point,label=above:$v_1$] at (0,1) {};
\node (b)[point,label=left:$v_2$] at (-1,0) {};
\node (c)[point,label=right:$v_3$] at (1,0) {};
\node (g)[label=above: $\xrightarrow{\tiny\begin{array}{c}
\sigma_5 = \{v_1\} \\ F_5 = \{v_1, v_2, v_3 \}
\end{array}}$] at (3,-.5) {};
\draw [fill=gray!20](a.center) -- (b.center) -- (c.center);
\draw (a.center) -- (c.center);

 \filldraw [black] (a.center) circle (1pt);
 \filldraw [black] (b.center) circle (1pt);
 \filldraw [black] (c.center) circle (1pt);
\end{tikzpicture} 

& \begin{tikzpicture}
\tikzstyle{point}=[inner sep=0pt]
\node (b)[point,label=left:$v_2$] at (-1,0) {};
\node (c)[point,label=right:$v_3$] at (1,0) {};
\node (g)[label=above: $\xrightarrow{\tiny\begin{array}{c}
\sigma_6 = \{v_2\} \\ F_6 = \{v_2, v_3 \}
\end{array}}$] at (3,-.5) {};
\draw (b.center) -- (c.center) ;
 \filldraw [black] (b.center) circle (1pt);
 \filldraw [black] (c.center) circle (1pt);
\end{tikzpicture} &

\begin{tikzpicture}
\tikzstyle{point}=[inner sep=0pt]
\node (b)[point,label=right:$v_3$] at (-1,0) {};
\node (c) at (1,0) {};
\node (g) at (3,-.5) {};
 \filldraw [black] (b.center) circle (1pt); 
\end{tikzpicture}
\end{array} 
$$
\caption{An example of collapsing}
\label{f:collapsing}
\end{figure}

\end{example} 

In general, by starting with a leaf and the face that contains it, any
quasi-tree can be collapsed to a point.

\begin{proposition}\label{p:qtree-collapsible}
Every quasi-tree is collapsible. 
\end{proposition}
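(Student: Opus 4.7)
The plan is to argue by induction on the number of facets $q$ of the quasi-tree $\Delta$. The base case $q=1$ reduces to the standard fact that a simplex collapses to a point, which can be seen directly by iterated elementary collapses along codimension-one faces.

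For the inductive step assume $q \ge 2$, and let $F_1,\dots,F_q$ be a leaf order for $\Delta$ with a joint $G \in \{F_1,\dots,F_{q-1}\}$ of the leaf $F_q$. Because $F_q$ is a facet distinct from $G$, it cannot be contained in $G$, so there exists a vertex $v \in F_q \setminus G$. The key observation is that such a $v$ lies in no other facet of $\Delta$: if $v \in F_q \cap H$ for some facet $H \ne F_q$, then $v \in G$ by the defining property of a joint, contradicting the choice of $v$. In particular $\{v\}$ is a free face whose only containing facet is $F_q$, so one may collapse $\Delta$ along $\{v\}$, which deletes every face of $F_q$ containing $v$ and leaves a simplicial complex on the vertex set $V \setminus \{v\}$ whose facets are $F_1,\dots,F_{q-1}$ together with $F_q \setminus \{v\}$ (the latter discarded if it already lies inside some $F_i$ with $i<q$).

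I would then iterate the step: each remaining vertex of $F_q \setminus G$ still fails to appear in any facet other than the current remnant of $F_q$, so the same argument applies again. Once all vertices of $F_q \setminus G$ have been peeled away, the surviving portion of $F_q$ equals $F_q \cap G \subseteq G$, so it is absorbed into $G$ and the resulting complex is precisely $\langle F_1,\dots,F_{q-1}\rangle$.

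To apply the inductive hypothesis I still need $\langle F_1,\dots,F_{q-1}\rangle$ to be a quasi-tree, which is the main obstacle of the argument. The quasi-forest property is immediate from the leaf order $F_1,\dots,F_{q-1}$, so everything hinges on connectedness. The idea is a rerouting argument: the vertices of $F_q \setminus G$ belong to no facet other than $F_q$, so deleting them does not disconnect anything that was previously joined without using $F_q$; and any chain of facets in $\Delta$ that previously passed through $F_q$ may be rerouted through $G$, because the relation $F_q \cap H \subseteq G$ for every facet $H \ne F_q$ guarantees that $G$ shares a vertex with every facet that shared a vertex with $F_q$. This establishes connectedness of $\langle F_1,\dots,F_{q-1}\rangle$, and the inductive hypothesis then finishes the proof.
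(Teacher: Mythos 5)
Your proof is correct and follows essentially the same strategy as the paper's: induct on the number of facets and collapse the leaf $F_q$ down to $F_q \cap G$, thereby reducing to the quasi-tree $\langle F_1,\dots,F_{q-1}\rangle$. The only differences are in the details: where the paper cites \cite{F14} for the fact that a simplex collapses onto any of its faces, you peel off the free vertices of $F_q\setminus G$ one at a time, and you also spell out the connectedness of the smaller complex, which the paper leaves implicit.
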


\begin{proof} 
Proceed by induction on $q$, the number of facets of $\D$. When $q=1$,
$\D$ is a simplex, and known to be collapsible (e.g. \cite[Proposition~2.7]{F14}). Suppose the statement holds for all quasi-trees with
fewer than $q$ facets.

Let $\D$ be a quasi-tree with facet ordering $F_1,\ldots,F_q$ so
that,  for $i=1,\ldots,q$, each $F_i$ is a leaf of $\tuple{F_1,\ldots,F_i}$.  Since $F_q$ is a leaf of $\D$, it intersects
$\D'=\tuple{F_1,\ldots,F_{q-1}}$ in a face $F'$ of $\D'$. By~\cite[Proposition~2.7]{F14}, $F_q$ collapses down to the face
$F'$ by removing faces outside $\D'$. As a result, this series of
collapses brings $\D$ to the quasi-tree $\D'$, which, by the induction
hypothesis, is collapsible. 
\end{proof}

We now use \cref{p:qtree-collapsible} and \cref{p:qtree-induced} to
show that the Bayer-Peeva-Sturmfels criterion in \cref{t:BPS} can be extended to the class
of quasi-trees.

\begin{theorem}[{\bf Criterion for a quasi-tree to support a free  resolution}]\label{t:res-by-quasitree} Let $\D$ be a quasi-tree whose vertices are labeled
with a monomial generating set of a monomial ideal $I$ in a
polynomial ring $S$ over a field.  Then $\D$ supports a free
resolution of $I$ over $S$ if and only if for every monomial $M$, the
induced subcomplex $\D_M$ of $\D$ on the vertices whose labels divide
$M$ is empty or connected.
\end{theorem}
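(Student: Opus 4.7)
The plan is to reduce the statement to the Bayer--Peeva--Sturmfels criterion (\cref{t:BPS}) and then show that, \emph{for induced subcomplexes of a quasi-tree}, the two conditions ``empty or acyclic'' and ``empty or connected'' coincide. Since \cref{t:BPS} already gives a biconditional characterization of simplicial complexes supporting a free resolution in terms of acyclicity of the induced subcomplexes $\D_M$, it suffices to prove that for every monomial $M$,
\[
\D_M \text{ is empty or acyclic} \iff \D_M \text{ is empty or connected}.
\]

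The forward direction is automatic: any nonempty acyclic simplicial complex is connected, since connectedness is equivalent to vanishing of $\tilde H_0$.

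For the backward direction, suppose $\D_M$ is nonempty and connected; I must show $\D_M$ is acyclic. First, since $\D$ is a quasi-tree, it is in particular a quasi-forest, and \cref{p:qtree-induced} says that the induced subcomplex $\D_M$ is itself a quasi-forest. A connected quasi-forest is, by \cref{d:defs}(7), a quasi-tree, so $\D_M$ is a quasi-tree. Now \cref{p:qtree-collapsible} tells us that every quasi-tree is collapsible, i.e.\ reducible to a point by a sequence of elementary collapses. Since elementary collapses are simple-homotopy equivalences, $\D_M$ is contractible, and in particular all of its (reduced) homology groups vanish. Hence $\D_M$ is acyclic.

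The two directions together combine with \cref{t:BPS} to give the stated biconditional. The step I expect to require the most care is the bookkeeping for what ``acyclic'' means in \cref{t:BPS}: the BPS criterion is usually stated in terms of vanishing of reduced simplicial homology of $\D_M$, and I want to be sure that the chain ``collapsible $\Rightarrow$ contractible $\Rightarrow$ trivial reduced homology'' is the right bridge. Apart from that, the argument is a clean assembly of \cref{t:BPS}, \cref{p:qtree-induced}, and \cref{p:qtree-collapsible}, with no new combinatorial input required.
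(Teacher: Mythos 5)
Your proposal is correct and follows essentially the same route as the paper: invoke \cref{t:BPS}, use \cref{p:qtree-induced} to see that $\D_M$ is a quasi-forest, and use \cref{p:qtree-collapsible} to conclude that a connected such subcomplex is contractible and hence acyclic, so that for these subcomplexes ``empty or acyclic'' coincides with ``empty or connected.'' The only cosmetic difference is that the paper phrases the collapsibility step in terms of each connected component of $\D_M$ being contractible, whereas you apply it directly to $\D_M$ in the connected case; the content is identical.
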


\begin{proof} By \cref{t:BPS}, $\D$ supports a
resolution of $I$ if and only if $\D_M$ is empty or acyclic for
every monomial $M \in S$. If $M \in S$ is a monomial, then by
\cref{p:qtree-induced} above, $\D_M$ is a quasi-forest.  Moreover,
by \cref{p:qtree-collapsible}, every connected component of $\D_M$
is contractible, i.e., homotopy equivalent to a point, and hence
acyclic. The only possible homology that $\D_M$ could have would be that which comes
from it being disconnected. As a result, $\D$ supports a
resolution of $I$ if and only if $\D_M$ is empty or connected for
every monomial $M \in S$.
\end{proof}

\section{The quasi-tree $\Lrq$}
\label{sec:Lrq}

Recall that our goal is to find a simplicial complex smaller than
Taylor's complex that supports a free resolution of $I^r$ when $I$
has $q$ square-free monomial generators. In this section we construct
a subcomplex $\Lrq$ of the Taylor simplex, which depends only on the
integers $r$ and $q$, and contains a subcomplex supporting a free
resolution of $I^r$.

The base case for our construction is the case $r=1$. In this case
$\LL^1_q$ is the well-known Taylor complex~\cite{T}: a simplex with $q$ vertices
that supports a free resolution of any monomial ideal with
$q$ generators.  The case $r=2$ was investigated in the earlier work
of the authors~\cite{L2}. For example, it was shown in~\cite{L2}
that $\LL^2_3$ is the quasi-tree in \cref{e:quasitrees2}.

We now collect and formalize the notation needed for the extension to
the case $r \geq 3$.

\begin{notation} \label{n:rqs}
Let $r$ and $q$ be two positive integers. 

\begin{itemize}
\setlength\itemsep{1em}

\item Let $\be_1, \ldots, \be_q$ denote the standard basis vectors for
$\mathbb R^q$; i.e., for each $i\in [q]$, $\be_i$ is the $q$-tuple with 1 in the $i^{th}$
coordinate and 0 elsewhere.

\item Define $\Nrq$ to be the set of points in $ \ZZ_{\ge 0}^q$ whose
coordinates add up to $r$:
\begin{align*}
\Nrq = &\{(a_1,\ldots, a_q) \in \ZZ_{\ge 0}^q \st a_1 + \cdots +
a_q =r\}\\ =&\{a_1\be_1 + \cdots +a_q\be_q \st a_i \in \ZZ_{\ge 0}
{\text{ and }} a_1 + \cdots + a_q =r\}.
\end{align*}

\item Set $s=\left\lceil{\frac{r}{2}}\right\rceil$; that is, when $r$ is odd,
$r=2s-1$, and when $r$ is even, $r=2s$. 
\end{itemize}
\end{notation}

\begin{definition}[{\bf The simplicial complex $\Lrq$ - see also \cref{p:Lrq}}]\label{d:Lrq}
Let $r, q\ge 1$ be two integers. Following \cref{n:rqs} we define
$\Lrq$ to be the simplicial complex with vertex set $\Nrq$ whose faces
are all subsets of the (not necessarily distinct) sets $F^r_1, \dots,
F^r_{q}, G^r_1, \ldots , G^r_q$ defined as
\begin{align*}
F^r_i &=\{(a_1, \ldots, a_q) \in \Nrq \colon a_i \le \max\{r-1, s\} \mbox{ and }
a_j \leq s \mbox{ for } j \neq i\}\\ {}&\\ G^r_i&=\{(a_1, \ldots, a_q)
\in \Nrq \colon a_i \ge r-1\} = \{(r-1) \be_i + \be_j \colon j \in
    [q]\}
\end{align*}
for each $i\in [q]$.  We refer to the set $\{F^r_1, \dots, F^r_q\}$ as
the {\bf first layer} of $\Lrq$ and the set $\{G^r_1, \dots, G^r_q\}$
as the {\bf second layer} of $\Lrq$. We define the {\bf base} of
$\Lrq$ to be the face $$B^r=\left\{(a_1, \ldots, a_q) \in \Nrq \colon
a_i \le s \mbox{ for all } i\in [q] \right\},$$ so that
$$F_i^r=B^r \cup \{(a_1, \ldots, a_q) \in \Nrq \colon s+1 \le a_i \le r-1\}.$$
\end{definition}

In general, $F^r_1, \ldots F^r_q, G^r_1, \ldots , G^r_q$ are the
facets of $\Lrq$ (\cref{p:Lrq}); however, for small values of $r$
and $q$, these sets need not be distinct. We summarize these facts
in \cref{p:Lrq} to give a more precise description of the facets of
$\Lrq$.

\begin{proposition}[{\bf Equivalent definition of $\Lrq$}] 
\label{p:Lrq}
The simplicial complex $\Lrq$ in \cref{d:Lrq} can be described in
terms of its distinct facets:
$$
\Lrq=\begin{cases}
\langle F^r_1, F^r_2, \dots, F^r_q, G^r_1,\dots, G^r_q\rangle &
\text{if $r>3$ and $q\ge 2$}\\
\langle B^r, G^r_1,\dots, G^r_q\rangle &
\text{if $r = 3$ and $q\geq 2$ or $r = 2$ and $q > 2$ }\\
\langle G^2_1, G^2_2\rangle &
\text{if $r = 2$ and $q=2$}\\
\langle \N^r_q \rangle &
\text{if $r=1$ or $q=1$.}\\
\end{cases}
$$  
\end{proposition}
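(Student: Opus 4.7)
The plan is to prove \cref{p:Lrq} by a direct case analysis, in each case explicitly computing $F_i^r$, $G_i^r$, and $B^r$ from \cref{d:Lrq} and verifying which inclusions among them hold in order to extract the distinct facets. The guiding observation is the behavior of $\max\{r-1,s\}$ where $s=\left\lceil r/2 \right\rceil$: when $r\in\{1,2,3\}$ one has $s\ge r-1$, so $\max\{r-1,s\}=s$ and the condition $a_i\le \max\{r-1,s\}$ defining $F_i^r$ collapses to the base condition $a_j\le s$, forcing $F_i^r=B^r$ for every $i$; when $r\ge 4$ one has $s<r-1$ strictly, so $F_i^r\supsetneq B^r$.

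First I would dispose of the trivial cases $r=1$ and $q=1$: here $\Nrq$ is either $\{\be_1,\ldots,\be_q\}$ or the single point $r\be_1$, and in both situations the defining inequalities for each $F_i^r$ and each $G_i^r$ are satisfied by every element of $\Nrq$, so $\Lrq$ is the full simplex on $\Nrq$. Next I would handle $r=2$, $q=2$: direct enumeration of $\N^2_2=\{2\be_1,\be_1+\be_2,2\be_2\}$ shows $F_1^2=F_2^2=B^2=\{\be_1+\be_2\}\subsetneq G_1^2=\{2\be_1,\be_1+\be_2\}$ and analogously for $G_2^2$, leaving exactly $G_1^2$ and $G_2^2$ as the maximal faces.

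For the intermediate regimes ($r=2$, $q>2$ or $r=3$, $q\ge 2$) the dichotomy yields $F_i^r=B^r$ for every $i$, so the only candidate facets are $B^r$ and $G_1^r,\ldots,G_q^r$. Pairwise incomparability would be shown via the following: the point $r\be_i\in G_i^r$ has $a_i=r>s$ and $a_j=0<r-1$ for $j\ne i$, simultaneously ruling out $G_i^r\subseteq B^r$ and $G_i^r\subseteq G_j^r$; to rule out $B^r\subseteq G_i^r$, when $q\ge 3$ one constructs a point of $B^r$ supported on $[q]\setminus\{i\}$ (possible because $r\le 2s$ can be split into two parts each $\le s$ and there are at least two indices available), while for the outlier $r=3$, $q=2$ one checks directly that $B^3=\{2\be_1+\be_2,\,\be_1+2\be_2\}$ is not contained in either $G_1^3$ or $G_2^3$.

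For the main case $r>3$, $q\ge 2$: the $F_i^r$ are pairwise distinct because $(r-1)\be_i+\be_k\in F_i^r$ (for any $k\ne i$) has $a_i=r-1>s$ and so lies in no other $F_j^r$. I would then rule out the remaining inclusions using the same style of witnesses: $r\be_i\in G_i^r$ is excluded from every $F_j^r$ since $r>r-1$; the base point $s\be_i+(r-s)\be_k\in F_i^r$ with $k\ne i$ is excluded from every $G_j^r$ since both $s$ and $r-s$ are strictly less than $r-1$ once $r\ge 4$; and $G_i^r\not\subseteq G_j^r$ for $j\ne i$ by the same argument as in the intermediate case. The main obstacle is purely bookkeeping across the subcases: in particular keeping track of the small regimes where a facet from one layer accidentally sits inside a facet from the other (as with $B^2\subset G_1^2$ when $q=2$, the reason $B^r$ disappears from the facet list there), and handling the outlier $r=3$, $q=2$ where $B^r$ survives as a facet even though $q<3$.
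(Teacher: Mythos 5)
Your proposal is correct and takes essentially the same approach as the paper's proof: a case split on $(r,q)$ governed by the observation that $F_i^r=B^r$ precisely when $r\le 3$, together with explicit witness points for each required non-inclusion (the paper's witnesses differ only cosmetically, e.g.\ it uses $(s+1)\be_j+(r-s-1)\be_i$ to separate $F^r_j$ from $F^r_i$, and the fact that $B^r\cap G^r_j=\emptyset$ when $s<r-1$ where you use a specific point of $B^r$). The only slip is in the case $q=1$, $r\ge 2$, where the lone point $(r)$ actually \emph{fails} the $F_1^r$ inequality (so $F_1^r=\emptyset$, not all of $\N^r_1$); the conclusion $\Lrq=\langle \N^r_1\rangle$ still holds because $G_1^r=\N^r_1$.
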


\begin{proof}

When $r=1$, then $s = 1$ and $\N^1_q = \{\be_1, \ldots, \be_q\}$.
It follows that $$B^1 = F^1_i=G^1_i=\N^1_q \qforall i\in [q].$$

In this case $\LL^1_q$ is a simplex with $q$ vertices (the
Taylor simplex).

If $q=1$ and $r>1$, then $$B^r=F^r_1=\emptyset \qand
G^r_1=\N^r_1 = \{(r)\}.$$

If $q = r = 2$, then
$$G^2_1 = \{(2,0),(1,1)\} \qand G^2_2 = \{(0,2),(1,1)\}$$ and since
$s=1=r-1$,
$$F^2_1=F^2_2=B^2=\{(1,1)\} \subseteq G^2_1\cap G^2_2.$$

When $q\ge2$ and $r=3$, or $q >2$ and $r = 2$, then the $G_i^r$
are distinct as each $G_i^r$ is the unique facet containing the vertex
$r\be_i$.  Furthermore, since $s+1=r$,
$$F^r_1=\dots=F^r_q=B^r$$ contains all the vertices $(r-1)\be_i+\be_j$
where $i\neq j$, and so $B^r$ is not embedded in any of the $G_i^r$.

Finally, when $q\ge2$ and $r>3$, then $s <
                r-1$. Therefore, the $G_i^r$ are distinct, since each
                $G_i^r$ is the unique facet containing the vertex
                $r\be_i$.

For $i, j \in [q]$, $i\neq j$, $F^r_j$ is not
                contained in $F^r_i$ since
$$(s+1) \be_j + (r - s-1) \be_i \in F^r_j \setminus F^r_i.$$

Moreover, $r \be_j \notin F^r_i$, showing that $$G^r_j \nsubseteq F^r_i.$$

Lastly, no $F^r_i$ can be contained in $G^r_j$ since $B^r \subseteq F^r_i$,
but $B^r \cap G^r_j = \emptyset$ when $s < r-1$.
\end{proof}

\begin{example}\label{e:q=2}
The geometric realization of the simplicial complex $\LL^3_2$
is a path of length~3, as can be seen in the figure below.

\begin{center}
\begin{tikzpicture}
\tikzstyle{point}=[inner sep=0pt]
\node (a)[point,label=above:$3\be_1$] at (-3,0) {\tiny{$\bullet$}};
\node (b)[point,label=above:$2\be_1+\be_2$] at (-1,0) {\tiny{$\bullet$}};
\node (c)[point,label=above:$\be_1+2\be_2$] at (1,0) {\tiny{$\bullet$}};
\node (d)[point,label=above:$3\be_2$] at (3,0) {\tiny{$\bullet$}};
\draw (a.center) -- (d.center);
\draw (b.center) -- (c.center);
\pgfputat{\pgfxy(-2.2, -.25)}{\pgfbox[left,center]{$G^3_1$}}
\pgfputat{\pgfxy(-.2,-.25)}{\pgfbox[left,center]{$B^3$}}
\pgfputat{\pgfxy(1.8,-.25)}{\pgfbox[left,center]{$G^3_2$}}
\end{tikzpicture}
\end{center}
Since $$\N^3_2 =\{(3,0),(2,1),(1,2),(0,3)\} = \{3\be_1, 2\be_1+\be_2, \be_1+2\be_2, 3\be_2\},$$ according to \cref{p:Lrq}, the facets of
$\LL^3_2$ are
\begin{align*}
B^3 &= \{(2,1),(1,2)\} = \{2\be_1 + \be_2, \be_1+ 2\be_2\}\\
G^3_1 &= \{(3,0),(2,1)\} = \{3\be_1, 2\be_1 + \be_2\}\\
G^3_2 &= \{(0,3),(1,2)\} = \{3\be_2, \be_1 + 2\be_2\}.
\end{align*}

By contrast, if $q=2$ and $r \geq 4$ is even, then
$B^r$ is a single vertex
$$B^r = \{(r/2)\be_1 + (r/2)\be_2 \} \subsetneq F^r_i \qforall i \in
[q].$$ For instance, if $r=6$, then $\LL^6_2$ is pictured below, and
$B^6$ is the single point at the middle of the `bow-tie' and only the
$F^r_i$s and $G^r_j$s are facets.

\begin{center}
\begin{tikzpicture}
\tikzstyle{point}=[inner sep=0pt]
\node (a)[point,label=left:$6\be_1$] at (-3.5,0)  {};
\node (b)[point,label=above:$5\be_1+\be_2$] at (-2,.75)  {};
\node (c)[point,label=below:$4\be_1+2\be_2$] at (-2,-.75)   {};
\node (d)[point] at (0,0)   {};
\node (e)[point,label=above:$2\be_1+4\be_2$] at (2,.75)  {};
\node (f)[point,label=below:$\be_1+5\be_2$] at (2,-.75)   {};
\node (g)[point,label=right:$6\be_2$] at (3.5,0)   {};
\draw [fill=gray!20](d.center) -- (e.center) -- (f.center);
\draw [fill=gray!20](b.center) -- (c.center) -- (d.center);
\draw (a.center) -- (b.center);
\draw (f.center) -- (g.center);
\draw (b.center) -- (f.center);
\draw (b.center) -- (c.center);
\draw (e.center) -- (f.center);
\draw (f.center) -- (g.center);
\pgfputat{\pgfxy(-3.35,.6)}{\pgfbox[left,center]{$G^6_1$}}
\pgfputat{\pgfxy(-1.5, 0)}{\pgfbox[left,center]{$F^6_1$}}
\pgfputat{\pgfxy(1,0)}{\pgfbox[left,center]{$F^6_2$}}
\pgfputat{\pgfxy(3,-.5)}{\pgfbox[left,center]{$G^6_2$}}
\pgfputat{\pgfxy(-.7,.5)}{\pgfbox[left,center]{$3\be_1+3\be_2$}}
\draw[black, fill=black] (a) circle(0.04);
\draw[black, fill=black] (b) circle(0.04);
\draw[black, fill=black] (c) circle(0.04);
\draw[black, fill=black] (d) circle(0.04);
\draw[black, fill=black] (e) circle(0.04);
\draw[black, fill=black] (f) circle(0.04);
\draw[black, fill=black] (g) circle(0.04);
\end{tikzpicture}
\end{center} 

\end{example}

Note that the points in $\N_q^r$ can be viewed as lattice points in $\mathbb{R}^q$. Indeed, they are precisely the integer lattice points in a hyperplane section of the first octant, cut out by the hyperplane whose equation is 
	$$x_1+\cdots +x_q=r.$$
While for small values of $q$ this gives a natural way to depict the points, it does not illustrate the simplicial structure well. For instance, in \cref{e:q=2}, the $6$ points would be co-linear, while our depiction emphasizes the existence of the two triangles. Using the combinatorial approach rather than the embedding in $\mathbb{R}^q$ also allows for a generalized depiction, seen in the following example, that can easily be extended to higher $q$ and $r$.


\begin{example}\label{e:q=3} We examine the case $q=3$ in the same manner as above. The simplicial complexes
$\LL^1_3$, $\LL^2_3$ and $\LL^3_3$ appear in \cref{f:33}, and
  $\LL^6_3$ appears in \cref{f:L63}.

\def\firstcircle{(0,0) circle (2cm)}
        \def\secondcircle{(-2,0.5) circle (1cm)}
        \def\thirdcircle{(0, -2) circle (1cm)}
\def\fourthcircle{(1.3, 1.5) circle (1cm)}

\begin{figure}
\begin{tabular}{ccc}
\begin{tikzpicture}
 \tikzstyle{point}=[inner sep=0pt]
 \node (a)[] at (0,1) {\tiny{$\bullet$}};
 \node (b)[] at (-1,0) {\tiny{$\bullet$}};
 \node (c)[] at (1,0) {\tiny{$\bullet$}};
 \node (d)[] at (-1.5,1.5) { };
 \node (e)[] at (1.5,1.5) { };
 \node (f)[] at (0,-1) {};
 \draw [fill=gray!20](a.center) -- (b.center) -- (c.center);
 \draw (a.center) -- (b.center);
 \draw (a.center) -- (c.center);
 \draw (b.center) -- (c.center);
 \filldraw [black] (a.center) circle (1pt);
 \filldraw [black] (b.center) circle (1pt);
 \filldraw [black] (c.center) circle (1pt);
 \pgfputat{\pgfxy(-.1,.4)}{\pgfbox[left,center]{$B^1$}}
\end{tikzpicture}&
\begin{tikzpicture}
\tikzstyle{point}=[inner sep=0pt]
 \node (a)[] at (0,1) {\tiny{$\bullet$}};
 \node (b)[] at (-1,0) {\tiny{$\bullet$}};
 \node (c)[] at (1,0) {\tiny{$\bullet$}};
 \node (d)[] at (-1.5,1.5) {\tiny{$\bullet$}};
 \node (e)[] at (1.5,1.5) {\tiny{$\bullet$}};
 \node (f)[] at (0,-1) {\tiny{$\bullet$}};
 \draw [fill=gray!20](a.center) -- (b.center) -- (c.center);
 \draw [fill=gray!20](a.center) -- (b.center) -- (d.center);
 \draw [fill=gray!20](a.center) -- (c.center) -- (e.center);
 \draw [fill=gray!20](b.center) -- (c.center) -- (f.center);
 \draw (a.center) -- (b.center);
 \draw (a.center) -- (c.center);
 \draw (a.center) -- (d.center);
 \draw (a.center) -- (e.center);
 \draw (b.center) -- (c.center);
 \draw (b.center) -- (d.center);
 \draw (b.center) -- (f.center);
 \draw (c.center) -- (e.center);
 \draw (c.center) -- (f.center);
 \filldraw [black] (a.center) circle (1pt);
 \filldraw [black] (b.center) circle (1pt);
 \filldraw [black] (c.center) circle (1pt);
 \filldraw [black] (d.center) circle (1pt);
 \filldraw [black] (e.center) circle (1pt);
 \filldraw [black] (f.center) circle (1pt);
\pgfputat{\pgfxy(-.9,.8)}{\pgfbox[left,center]{$G^2_1$}}
\pgfputat{\pgfxy(.7,.8)}{\pgfbox[left,center]{$G^2_2$}}
\pgfputat{\pgfxy(-.1,-.4)}{\pgfbox[left,center]{$G^2_3$}}
\pgfputat{\pgfxy(-.1,.4)}{\pgfbox[left,center]{$B^2$}}
\end{tikzpicture}&  
\begin{tikzpicture}
 \draw \firstcircle node[below ,yshift=-2mm, xshift=-3mm  ] {\tiny{$ \begin{array}{cc} \bullet\\ \be_1+\be_2+\be_3 \end{array}$}};
 \draw \firstcircle node[above, xshift=3mm  ] {\small{$B^3$}};
 \draw \secondcircle node [below left, xshift=-3mm  ] {\small{$G^3_2 $}};
 \draw \secondcircle node [above left, xshift=-1mm ] {\tiny{$\begin{array}{cc} 3\be_2\\ \bullet \end{array}$}};
 \draw \secondcircle node  [right , xshift=-2mm  ] {\tiny{$\begin{array}{cc} \bullet \hspace{1mm} \bullet \\ 2\be_2+\be_i \\i \neq 2 \end{array}$}};
 \draw \thirdcircle node [below, yshift=-3mm, xshift=-1mm ] {\small {$G^3_3$}};
 \draw \thirdcircle node [below right] {\tiny $\begin{array}{cc} 3\be_3\\ \bullet\end{array}$};
 \draw \thirdcircle node [above] {\tiny $\begin{array}{cc} \bullet  \hspace{1mm} \bullet \\ 2\be_3+\be_i\\ i \neq 3\end{array}$};
 \draw \fourthcircle node [above,yshift=3mm  ] {\small {$G^3_1$}};
 \draw \fourthcircle node [above right] {\tiny $\begin{array}{cc} 3\be_1\\ \bullet \end{array}$};
 \draw \fourthcircle node [below, xshift=-2mm] {\tiny $\begin{array}{cc} \bullet  \hspace{1mm} \bullet \\ 2\be_1+\be_i\\ i \neq 1\end{array}$};
\end{tikzpicture}\\
&&\\
$\LL^1_3$ & $\LL^2_3$& $\LL^3_3$\\
\end{tabular}
\caption{A picture of $\LL_3^1$, $\LL_3^2$ and $\LL_3^3$}
\label{f:33}
\end{figure}

\def\firstcircl{(1.25,3.25) circle (1.05cm)}
\def\secondcircl{(-2.1,0) circle (1.05cm)}
\def\thirdcircl{(4.6, 0) circle (1.05cm)}

\begin{figure}
\begin{tikzpicture}
\draw  [rotate=90] (0, 0) ellipse (1cm and 2.cm)  node[xshift=-2.5mm, yshift= -2.2mm] {\tiny{$ \begin{array}{cc} \bullet  \hspace {1mm} \bullet  \hspace {1mm} \bullet \\ 4\be_3+\be_i+\be_j \\ i, j \neq 3 \end{array}$}} node[above, xshift=-2mm, yshift= 3mm]{{\small $F^6_3$}} ;  ;
\draw  [rotate=90, yshift= -2.5cm] (0, 0) ellipse (1cm and 2 cm) node[xshift=3mm, yshift= -2mm] {\tiny{$ \begin{array}{cc} \bullet  \hspace {1mm} \bullet  \hspace {1mm} \bullet \\ 4\be_2+\be_i+\be_j \\ i, j \neq 2 \end{array}$}} node[above, xshift=2mm, yshift= 3mm]{{\small $F^6_2$}} ;
\draw [ xshift= 1.25cm,yshift= 1.25cm](0, 0) ellipse (1cm and 2cm) node[xshift=2mm, yshift= 1mm] {\tiny{$ \begin{array}{cc} \bullet  \hspace {1mm} \bullet  \hspace {1mm} \bullet \\ 4\be_1+\be_i+\be_j \\ i, j \neq 1 \end{array}$}} node[above , xshift=-2mm, yshift= 4.5mm]{{\small $F^6_1$}}  node[below, yshift=-1cm]{$B^6$}  ;

\node [style={circle,fill=black!7, minimum size=3.5 em}] (B) at (1.25, 0){$B^6$};
\node (A) at (1.25, -1.5){{\tiny $B^6=F^6_1 \cap F^6_2 = F^6_1 \cap F^6_3 = F^6_2 \cap F^6_3 $}};
\draw[->, dashed] (B)--(A);

\draw \firstcircl node[above , xshift= 1mm] {\tiny{$ \begin{array}{cc} \bullet \\ 6\be_1 \end{array}$}};
\draw \firstcircl node[above , yshift= 2mm, xshift= -4mm] {\small{$ G^6_1$}};
\draw \firstcircl node[below, yshift= -1.4mm] {\tiny{$ \begin{array}{cc} \bullet  \hspace {0.5mm} \bullet  \\ 5\be_1+\be_i\\i \neq 1 \end{array}$}};
\draw \secondcircl node[below ,yshift=-0.5mm, xshift=-2.mm  ] {\tiny{$ \begin{array}{cc} \bullet \\ 6\be_3 \end{array}$}};
\draw \secondcircl node[above left] {\small{$G^6_3$}};
\draw \secondcircl node(E)[right,  xshift= 1.5mm] {\tiny{$ \begin{array}{cc} \bullet  \hspace {1.5mm} \bullet \end{array}$}};
\draw \thirdcircl node[below right ,yshift=-2mm, xshift=-3mm  ] {\tiny{$ \begin{array}{cc} \bullet \\ 6 \be_2\end{array}$}};
\draw \thirdcircl node[above right] {\small{$G^6_2$}};
\draw \thirdcircl node(C)[left, xshift= -1mm] {\tiny{$ \begin{array}{cc} \bullet  \hspace {1.5mm} \bullet  \end{array}$}};

\node (D) at (3.85, 1.5) {$ {\tiny \begin{array}{cc}  5\be_2+\be_i\\i \neq 2 \end{array}}$};
\draw[->, dashed] (C)--(D);

\node(F) at (-1.38, 1.5) {\tiny{$ \begin{array}{cc} 5\be_3+\be_i\\i \neq 3 \end{array}$}};
\draw[->, dashed] (E)--(F);
\end{tikzpicture}\caption{A picture of $\LL^6_3$}\label{f:L63}
\end{figure}
\end{example}


The following statement generalizes~\cite[Proposition 3.3]{L2},
which deals with the special case $r=2$.

\begin{proposition}[{\bf $\Lrq$ is a quasi-tree}]\label{p:Lr-quasitree}
The simplicial complex $\Lrq$ is a quasi-tree.
\end{proposition}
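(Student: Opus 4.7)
The plan is to exhibit an explicit leaf ordering of the facets of $\Lrq$ and then verify connectedness, using the case breakdown in \cref{p:Lrq} to organize the analysis. The degenerate cases — $r=1$, $q=1$, and $r=q=2$ — will be dispatched directly, since $\Lrq$ is either a simplex or the two facets $G^2_1, G^2_2$ meeting in the vertex $\be_1 + \be_2$; each is immediately a quasi-tree.

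For the main case $r > 3$ and $q \ge 2$, I will use the ordering
$$F^r_1, \dots, F^r_q,\ G^r_1, \dots, G^r_q,$$
and for the intermediate case $r=3$ or ($r=2$ with $q>2$) I will use $B^r, G^r_1, \dots, G^r_q$ in place of it, since there the first layer collapses to the single facet $B^r$. The bulk of the work is a short list of pairwise intersection computations drawn directly from the coordinate inequalities defining the facets in \cref{d:Lrq}: (i) $F^r_i \cap F^r_j = B^r$ for $i \neq j$, because the two defining constraints together force every coordinate to be at most $s$; (ii) $G^r_i \cap F^r_i = G^r_i \setminus \{r\be_i\}$, since each vertex $(r-1)\be_i + \be_j$ with $j \neq i$ satisfies $a_i = r-1 \leq \max\{r-1,s\}$ and $a_j = 1 \leq s$; (iii) $G^r_i \cap F^r_j = \emptyset$ for $i \neq j$ and $r > 3$, because any vertex of $G^r_i$ has $a_i \geq r-1 > s$, violating the constraint $a_i \leq s$ imposed by $F^r_j$; and (iv) $G^r_i \cap G^r_j = \emptyset$ for $i \neq j$ and $r \geq 3$, by a direct comparison of $i$-th coordinates. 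The analogous $G^r_i \cap B^r = G^r_i \setminus \{r\be_i\}$ is computed the same way as (ii).

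Once these identities are in place, the leaf structure is immediate. Each $F^r_i$ with $i \geq 2$ meets all earlier facets $F^r_j$ in $B^r \subseteq F^r_1$, so $F^r_1$ serves as a joint and $F^r_i$ is a leaf of $\langle F^r_1, \dots, F^r_i\rangle$. Each $G^r_i$ meets every previously placed facet inside $F^r_i$ (or inside $B^r$ in the collapsed case), so $F^r_i$ (respectively $B^r$) is a joint. Connectedness, and hence the quasi-tree property, follows because $B^r$ is non-empty whenever $q \geq 2$ and $r \geq 2$ — for instance $s\be_1 + (r-s)\be_2 \in B^r$, as $r - s \leq s$ — and lies in every $F^r_i$, while each $G^r_i$ shares a vertex with $F^r_i$ (or $B^r$) by (ii).

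The main obstacle I anticipate is not mathematical depth but the bookkeeping required to line up the case breakdown of \cref{p:Lrq} with the appropriate choice of joint in each layer; none of the individual intersection calculations is difficult, but they must be run separately for the small values $r \leq 3$ before the uniform ordering takes over.
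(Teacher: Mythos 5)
Your proposal is correct and follows essentially the same route as the paper's proof: the identical case split from \cref{p:Lrq}, the same leaf orderings ($B^r, G^r_1,\dots,G^r_q$ for $r=2,3$ and $F^r_1,\dots,F^r_q,G^r_1,\dots,G^r_q$ for $r>3$), and the same pairwise intersection computations identifying $F^r_1$ (resp.\ $B^r$) and $F^r_i$ (resp.\ $B^r$) as the joints. The only difference is cosmetic: you make the connectedness check and the nonemptiness of $B^r$ explicit, which the paper leaves implicit.
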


\begin{proof}    Following \cref{p:Lrq} for a description of the
facets of $\Lrq$, we consider each case separately.

When $r=1$ or $q=1$, $\LL^1_1$ is a simplex,
and hence by definition a quasi-tree.

When $r = q = 2$,  $\LL^2_2$ has only two
facets, and it is trivially a quasi-tree.

When $r=2$ and $q > 2$, then  order the
facets of $\LL^2_q$ by $$B^2, G^2_1, \ldots, G^2_q.$$ In this case,
if $i \neq j$ with $i,j \in [q]$, then
$$G^2_i \cap B^2 \subseteq B^2\qand G^2_i \cap G^2_j= \{\be_i + \be_j\}
\subseteq B^2.$$ Thus each $G^2_i$ is a leaf of $\tuple{B^2, G^2_1,
\ldots, G^2_i}$ with joint $B^2$.

When $r = 3$ and $q \geq 2$, then as in the previous case,
$$B^3, G^3_1, \ldots, G^3_q$$ is
a leaf order.  Since $G^3_i \cap G^3_j= \emptyset$ when $i
\neq j$, for each $i \in [q]$ the facet $G^3_i$ is a leaf of $$\langle B^3,
G^3_1, \ldots, G^3_i \rangle$$ with joint $B^3$.

When $r>3$ and $q \geq 2$, we claim
\begin{equation}\label{e:leaf-order}
F^r_1, F^r_2, \dots, F^r_q, G^r_1,\dots, G^r_q
\end{equation} is a leaf order for $\Lrq$, making it a quasi-tree.
To see this note that if $i\neq j$, $i,j\in [q]$, then
\begin{align*}
F^r_i\cap G^r_j& =\emptyset, \quad  F^r_i\cap F^r_j=B^r, \quad
G^r_i\cap G^r_j= \emptyset,  \\ 
F^r_i \cap G^r_i & = \{(r-1)\be_i + \be_h \colon h \neq i \}.
\end{align*}

These observations show that for each $j\in [q]$, $G^r_j$ is a leaf
of $$\langle F^r_1, \dots, F^r_q, G^r_1, \ldots, G^r_{j}\rangle$$
with joint $F^r_{j}$, and for each $j\in \{2,\ldots, q\}$ $F^r_j$
is a leaf of $$\langle F^r_1, \ldots, F^r_j\rangle$$ with
joint, say, $F^r_{1}$. Hence \eqref{e:leaf-order} is a leaf order
and $\Lrq$ is a quasi-tree.
\end{proof}

\section{The quasi-tree $\Lr(I)$ supporting a free resolution of $I^r$}
\label{sec:LrI}

Given an ideal $I$ with $q$ square-free monomial generators, we now
define an induced subcomplex of $\Lrq$, denoted $\Lr(I)$, which is
obtained by deleting vertices representing redundant generators of
$I^r$.  We show in \cref{t:main} that both $\Lrq$ and $\Lr(I)$ support
a free resolution of $I^r$.


\begin{definition}[{\bf The simplicial complex $\Lr(I)$}]\label{d:LrI}
  Let $I$ be an ideal minimally generated by monomials
  $m_1,\ldots,m_q$ in the polynomial ring $S$. For $\ba = (a_1,
  \ldots, a_q) \in \Nrq$ we set
  $$\bma={m_1}^{a_1} \cdots {m_q}^{a_q}.$$ Define a partition of $\Nrq$
  into equivalence classes $V_1,\ldots,V_t$ by
  $$\ba \sim \bb \iff \bma=\bmb.$$
 We use these equivalence classes to build an induced subcomplex $\Lr(I)$, of $\Lrq$ using the following steps:
  \begin{enumerate}

  \item[Step 1.] From each equivalence class $V_i$ pick a unique
    representative $\bc_i$ in the following way: if $V_i \cap B^r \neq \emptyset$,
    then $\bc_i \in B^r$. Otherwise, choose any $\bc_i \in V_i$.
    
  \item[Step 2.]  From the set $\{\bc_1,\ldots,\bc_t\}$, eliminate all
    $\bc_i$ for which ${\m}^{\bc_j} \mid {\m}^{\bc_i}$ for some $j \neq
    i$. We call the remaining set, without loss of generality, $\{
    \bc_1,\ldots,\bc_u\}.$
  
  \item[Step 3.] Set $V=\{\bc_1,\ldots,\bc_u\}$.

  \end{enumerate}

 Define $\Lr(I)$ to be the induced subcomplex of $\Lrq$ on the
  vertex set $V$.
  \end{definition}

  The complex $\Lr(I)$ is a subcomplex of $\Taylor(I^r)$.  While
  $\Lr(I)$ is dependent on the choices made when building its vertex
  set $V$, we will abuse the notation and ignore these choices, as
  they do not have an impact on our results.  Also note that the set
  of monomials $\{ \m^{\bc _1},\ldots,\m^{\bc _u}\}$ forms a minimal
  monomial generating set for the ideal $I^r$.  There are known
  classes of ideals, for instance, square-free monomial ideals of
  projective dimension one \cite[Proposition 4.1]{morse}, for which
  the generating set $\{ \bma \mid \ba \in \Nrq\}$ does not contain
  redundancies, in which case $\Lrq = \Lr(I)$. However, in general,
  $\Lr(I)$ will be a proper subcomplex of $\Lrq$. Information about
  known redundancies in $\{ \bma \mid \ba \in \Nrq\}$ can be used to
  produce $\Lr(I)$. As an example, consider edge ideals of graphs. For
  such ideals, the redundancies in $\{ \bma \mid \ba \in \Nrq\}$ are
  encoded in the ideal of equations of the fiber cone of the ideal,
  whose generators correspond to primitive even closed walks in the
  graph (see \cite{villa} for relevant definitions and details).

\begin{example}  \label{e:square}  
  Let $S=\sfk[x,y,z,w]$ and $I = (xy,yz, zw, xw)=(m_1,m_2,m_3,m_4)$.
By \cref{p:Lrq}, the facets of $\LL_4^2$ are a $5$-simplex $B^2$ 
and four tetrahedra $G_i^2$ for $1 \leq i \leq 4$, depicted on the left in the figure below.

By \cref{d:LrI}, since $m_1m_3=m_2m_4$ is the only equation determining an equivalence class with more than one element,
we select the vertex $\be_1+\be_3$ to represent this equivalence class and form $\LL^2(I)$.
Then $\LL^2(I)$ consists of a $4$-simplex on the vertices 
$$\be_1+\be_2,\be_1+\be_3,\be_1+\be_4,\be_2+\be_3,\be_3+\be_4$$
together with two triangles and two tetrahedra depicted on the right
in the figure below. Note that vertex $\be_2+\be_4$ has been removed.
The edges depicted by dotted lines
in the figure of $\LL_4^2$ do not appear in $\LL^2(I)$ and higher
dimensional faces of $\LL_4^2$ containing $\be_2+\be_4$ have also been
deleted.
$$
\begin{tabular}{ccc}
\begin{tikzpicture}[x=0.35cm, y=0.35cm]
\tikzstyle{point}=[inner sep=0pt]
\node (a)[point,label=left:{\tiny $\be_1{+}\be_3$}] at (5,10)  {};
\node (b)[point,label=above:{\tiny $\be_1{+}\be_4$}] at (9,10)  {};
\node (c)[point,label=right:{\tiny $\be_2{+}\be_3$}] at (11,7)   {};
\node (d)[point,label=right:{\tiny $\be_2{+}\be_4$}] at (9,3)   {};
\node (e)[point,label=below:{\tiny $\be_3{+}\be_4$}] at (5,3)   {};
\node (f)[point,label=left:{\tiny $\be_1{+}\be_2$}] at (3,7)   {};
\node (g)[point,label=above:{\tiny $2\be_1$}] at (2,13)   {};
\node (h)[point,label=above:{\tiny $2\be_3$}] at (13,13)   {};
\node (i)[point,label=below:{\tiny $2\be_2$}] at (12,0)   {};
\node (j)[point,label=below:{\tiny $2\be_4$}] at (1,1)   {};;

\draw [dashed][fill=gray!10](a.center) -- (b.center) -- (c.center) -- (d.center) -- (e.center) -- (f.center);

\draw (a.center) -- (b.center);
\draw  (a.center) -- (c.center);
\draw [dashed](a.center) -- (d.center);
\draw (a.center) -- (e.center);
\draw (a.center) -- (f.center);
\draw (b.center) -- (c.center);
\draw [dashed](b.center) -- (d.center);
\draw (b.center) -- (e.center);
\draw (b.center) -- (f.center);
\draw (c.center) -- (e.center);
\draw (c.center) -- (f.center);
\draw [dashed](d.center) -- (e.center);
\draw [dashed](d.center) -- (f.center);
\draw (e.center) -- (f.center);
\draw [line width=1pt ] (g.center) -- (a.center);
\draw [line width=1pt  ] (g.center) -- (b.center);
\draw[line width=1pt  ]  (g.center) -- (f.center);
\draw [line width=1pt  ] (h.center) -- (a.center);
\draw [line width=1pt ] (h.center) -- (c.center);
\draw [line width=1pt  ] (h.center) -- (e.center);
\draw [line width=1pt ] (i.center) -- (c.center);
\draw [dashed] (i.center) -- (d.center);
\draw [line width=1pt  ] (i.center) -- (f.center);
\draw [line width=1pt  ] (j.center) -- (b.center);
\draw [dashed  ] (j.center) -- (d.center);
\draw [line width=1pt  ] (j.center) -- (e.center);

\draw[black, fill=black] (a) circle(0.04);
\draw[black, fill=black] (b) circle(0.04);
\draw[black, fill=black] (c) circle(0.04);
\draw[black, fill=black] (d) circle(0.04);
\draw[black, fill=black] (e) circle(0.04);
\draw[black, fill=black] (f) circle(0.04);
\draw[black, fill=black] (g) circle(0.04);
\draw[black, fill=black] (h) circle(0.04);
\draw[black, fill=black] (i) circle(0.04);
\draw[black, fill=black] (j) circle(0.04);
\end{tikzpicture} & \quad \quad & 
\begin{tikzpicture}[x=0.35cm, y=0.35cm]
\tikzstyle{point}=[inner sep=0pt]
\node (a)[point,label=left:{\tiny $\be_1{+}\be_3$}] at (5,10)  {};
\node (b)[point,label=above:{\tiny $\be_1{+}\be_4$}] at (9,10)  {};
\node (c)[point,label=right:{\tiny $\be_2{+}\be_3$}] at (11,7)   {};
\node (e)[point,label=below:{\tiny $\be_3{+}\be_4$}] at (5,3)   {};
\node (f)[point,label=left:{\tiny $\be_1{+}\be_2$}] at (3,7)   {};
\node (g)[point,label=above:{\tiny $2\be_1$}] at (2,13)   {};
\node (h)[point,label=above:{\tiny $2\be_3$}] at (13,13)   {};
\node (i)[point,label=below:{\tiny $2\be_2$}] at (12,0)   {};
\node (j)[point,label=below:{\tiny $2\be_4$}] at (1,1)   {};;

\draw [fill=gray!10](a.center) -- (b.center) -- (c.center) -- (e.center) -- (f.center);

\draw (a.center) -- (b.center);
\draw (a.center) -- (c.center);
\draw (a.center) -- (e.center);
\draw (a.center) -- (f.center);
\draw (b.center) -- (c.center);
\draw (b.center) -- (e.center);
\draw (b.center) -- (f.center);
\draw (c.center) -- (e.center);
\draw (c.center) -- (f.center);
\draw (e.center) -- (f.center);
\draw [line width=1pt  ] (g.center) -- (a.center);
\draw [line width=1pt  ] (g.center) -- (b.center);
\draw[line width=1pt  ]  (g.center) -- (f.center);
\draw [line width=1pt  ] (h.center) -- (a.center);
\draw [line width=1pt  ] (h.center) -- (c.center);
\draw [line width=1pt  ] (h.center) -- (e.center);
\draw [line width=1pt  ] (i.center) -- (c.center);
\draw [line width=1pt  ] (i.center) -- (f.center);
\draw [line width=1pt  ] (j.center) -- (b.center);
\draw [line width=1pt  ] (j.center) -- (e.center);

\draw[black, fill=black] (a) circle(0.04);
\draw[black, fill=black] (b) circle(0.04);
\draw[black, fill=black] (c) circle(0.04);;
\draw[black, fill=black] (e) circle(0.04);
\draw[black, fill=black] (f) circle(0.04);
\draw[black, fill=black] (g) circle(0.04);
\draw[black, fill=black] (h) circle(0.04);
\draw[black, fill=black] (i) circle(0.04);
\draw[black, fill=black] (j) circle(0.04);
\end{tikzpicture}\\
$\LL_4^2$ & \quad \quad &  $\LL^2(I)$ \\
\end{tabular}
$$

Next, consider $I^3$. The equation $m_1m_3=m_2m_4$ produces four
non-trivial equivalence classes from \cref{d:LrI}, namely the sets
$$ \{ 2\be_1+\be_3, \be_1+\be_2+\be_4\} \quad \{ \be_1+\be_2+\be_3, 2\be_2+\be_4\}, \quad \{\be_1+2\be_3, \be_2+\be_3+\be_4\}, 
\quad \{\be_1+\be_3+\be_4, \be_2+2\be_4\}.$$ 

One can check that the above relations are the only ones. Since $r=3$, we have $s=2$ and so
all the above duplicated vertices are in $B^3$. In particular,
$\LL^3(I)$ will be the induced subcomplex of $\LL_4^3$ on a vertex set
$V$ with $16$ vertices. The sets below are two different possible sets
of vertices $V$ for $\LL^3(I)$:
$$V=\left\{ 3\be_1, 2\be_1+\be_2, 2\be_1+\be_4, 3\be_2, 2\be_2+\be_1, 2\be_2+\be_3, 3\be_3, 2\be_3+\be_2, 2\be_3+\be_4, 3\be_4, 2\be_4+\be_1, 2\be_4+\be_3, 
\right.$$
$$\left. \be_1+\be_2+\be_3, \be_1+\be_2+\be_4, \be_1+\be_3+\be_4, \be_2+\be_3+\be_4 \right\}$$
or
$$V=\left\{ 3\be_1, 2\be_1+\be_2, 2\be_1 + \be_3, 2\be_1+\be_4, 3\be_2, 2\be_2+\be_1, 2\be_2+\be_3, 2\be_2+\be_4, 3\be_3, 2\be_3+\be_1, 2\be_3+\be_2, 2\be_3+\be_4, 
\right.$$
$$\left. 3\be_4, 2\be_4+\be_1, 2\be_4+\be_2, 2\be_4+\be_3 \right\}$$
 \end{example}

\begin{example}\label{eg:9vars}
Let $S=\sfk[a,b,c,d,e,f,x,y,z]$ and $$I=(xyz, abc, def, xza, xzb, xyc, xyd, yze, yzf) = (m_1,m_2,
\ldots, m_9).$$ We have  the following relation: 
$${m_1}^4m_2m_3 =
m_4m_5m_6m_7m_8m_9\,.$$
For $r=6$ we have
$s=3$, and so $$4\be_1+\be_2+\be_3 \in F_1^6 \setminus B^6 \qbut \be_4+\be_5+\be_6+\be_7+\be_8+\be_9\in B^6.$$
Therefore, the induced subcomplex $\mathbb L^6(I)$ would not contain the vertex
$4\be_1+\be_2+\be_3$ corresponding to ${m_1}^4m_2m_3$.
\end{example}

While the examples above show that, in general, $\Lr(I)$ is smaller than $\Lrq$, there are also cases when the two complexes are the same.  In \cref{s:min} we identify an ideal $\E_q$ with $\Lr(\E_q)=\Lrq$ for each $q$. 
The two complexes are also equal for all $I$ with $q\le 3$, as shown below.

\begin{proposition} \label{p:Lr=L(I)}
  Let $I$ be an ideal minimally generated by
  square-free monomials $m_1,\ldots,m_q$ in the polynomial ring
  $S$. If $q \leq 3$ then $\Lr(I)=\Lrq$ for all $r \geq 1$.
\end{proposition}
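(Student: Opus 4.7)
My approach is to show that for $q \leq 3$ the vertex set $V$ constructed in \cref{d:LrI} coincides with $\Nrq$, which immediately yields $\Lr(I) = \Lrq$. For this, it suffices to establish the following divisibility rigidity: for any $\ba, \bb \in \Nrq$, if $\bma$ divides $\bmb$, then $\ba = \bb$. Indeed, this claim forces every equivalence class in Step 1 of \cref{d:LrI} to be a singleton (since $\bma = \bmb$ entails mutual divisibility), and guarantees that no representative is discarded in Step 2. Thus the vertex set is the entire $\Nrq$.

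To prove the rigidity, I set $\bd = \bb - \ba \in \ZZ^q$, so that $d_1 + \cdots + d_q = 0$. For each variable $x$ of $S$, let $S_x = \{i \in [q] \colon x \mid m_i\}$. Since the $m_i$ are square-free, the exponent of $x$ in $\bma$ equals $\sum_{i \in S_x} a_i$, so the hypothesis $\bma \mid \bmb$ is equivalent to the family of inequalities $\sum_{i \in S_x} d_i \geq 0$ as $x$ ranges over variables of $S$. Minimality of the generating set translates into the combinatorial fact: for every ordered pair $(i,j) \in [q]^2$ with $i \neq j$, there exists a variable $x$ with $i \in S_x$ and $j \notin S_x$.

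For $q = 1$ there is nothing to prove. For $q = 2$, the minimality conditions yield $d_1 \geq 0$ and $d_2 \geq 0$, which combined with $d_1 + d_2 = 0$ forces $\bd = 0$. For $q = 3$, any $S \subseteq [3]$ containing $i$ but not $j$ equals either $\{i\}$ or $\{i,k\}$ where $\{i,j,k\} = [3]$; the associated inequality becomes either $d_i \geq 0$ or, using $d_1 + d_2 + d_3 = 0$, the equivalent form $d_j \leq 0$. Thus for every ordered pair $(i,j)$ with $i \neq j$ at least one of $d_i \geq 0$ or $d_j \leq 0$ must hold. If some $d_{i_0} < 0$, then for every $j \neq i_0$ the pair $(i_0, j)$ forces $d_j \leq 0$, yielding $\sum_l d_l < 0$, a contradiction. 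Hence all $d_i \geq 0$, and another appeal to $\sum_l d_l = 0$ gives $\bd = 0$.

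The argument is primarily combinatorial bookkeeping; the only subtlety is translating minimality into the existence of separating $S_x$'s and extracting appropriate coordinate-wise inequalities. I anticipate no serious obstacle beyond this small case analysis, and the restriction $q \leq 3$ is essential: for $q \geq 4$ a minimality-separating $S_x$ may contain three or more indices, so the resulting inequality no longer constrains just two coordinates, and genuine divisibilities appear, as witnessed by $\LL^2(I) \subsetneq \LL^2_4$ in \cref{e:square}.
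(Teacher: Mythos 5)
Your proposal is correct. The overall reduction is the same as the paper's: both arguments observe that it suffices to prove the divisibility rigidity $\bma \mid \bmb \implies \ba = \bb$ for $\ba,\bb \in \Nrq$, which forces the vertex set $V$ of \cref{d:LrI} to be all of $\Nrq$. Where you diverge is in how that rigidity is proved. The paper argues by cases on the signs of $a_i - b_i$, cancelling common factors to get divisibilities such as ${m_1}^{a_1-b_1} \mid {m_2}^{b_2-a_2}{m_3}^{b_3-a_3}$ and then, in the hardest $q=3$ case, running a degree count on a variable $x \mid m_1$ to conclude $m_1 \mid m_2$ and $m_1 \mid m_3$. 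You instead linearize: encoding $\bma \mid \bmb$ as the system $\sum_{i \in S_x} d_i \ge 0$ over the supports $S_x$, and encoding minimality as the existence of separating supports, after which a uniform sign argument on $\bd$ (one negative coordinate forces all others nonpositive, contradicting $\sum_l d_l = 0$) finishes all cases at once. Your route avoids the paper's three-case analysis and makes the role of the hypothesis $q \le 3$ transparent --- a separating support avoiding $j$ has size at most $2$, so each inequality constrains a single coordinate via the zero-sum relation --- whereas the paper's argument is more elementary and stays entirely at the level of monomial divisibility. Both are complete; yours is arguably the cleaner explanation of why the statement fails at $q = 4$.
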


\begin{proof} If $q=1$, then $I^r=({m_1}^r)$ for all $r$, and both
  $\Lr(I)$ and $\Lrq$ consist of a single point, so the equality holds.

 Assume $1< q \le 3$. By \cref{d:LrI} it suffices to show that
 $\Lr(I)$ and $\Lrq$ have the same vertex set, or in other
 words
   $$\bma \mid \bmb \implies  \ba=\bb \qforall \ba,\bb\in \Nrq.$$
 
  Suppose $q=2$ and $\bma \mid \bmb$ with
 $$\ba = (a_1,a_2), \quad \bb=(b_1,b_2), \qand a_1+a_2=b_1+b_2=r.$$ If
  $\ba\ne \bb$, then we may assume $a_1>b_1$ and $a_2<b_2$.  Since the
  monomials $m_i$ are square-free, we have
 $${m_1}^{a_1}{m_2}^{a_2} \mid {m_1}^{b_1}{m_2}^{b_2}\implies {m_1}^{a_1-b_1}
 \mid {m_2}^{b_2-a_2}\implies m_1\mid m_2\,,$$
which  contradicts the
 minimality of the generators of $I$.
 
Suppose $q=3$ and $\bma \mid \bmb$ with $$\ba
=(a_1,a_2,a_3),\quad \bb = (b_1,b_2,b_3) \qand
a_1+a_2+a_3=b_1+b_2+b_3=r.$$ 

Assume $\ba\ne \bb$.  If $a_i=b_i$ for some $i$ then we can reduce to
the case $q=2$, so we may assume $a_i\ne b_i$ for all $i$. Without
loss of generality, assume $a_1>b_1$.

We have three cases:

  \begin{enumerate}
 \item Suppose $a_2 >b_2$. In this case, we must also have
   $a_3<b_3$. Then $$ {m_1}^{a_1}{m_2}^{a_2}{m_3}^{a_3} \mid
   {m_1}^{b_1}{m_2}^{b_2}{m_3}^{b_3}\implies {m_1}^{a_1-b_1}{m_2}^{a_2-b_2} \mid
   {m_3}^{b_3-a_3}\implies {m_1}^{a_1-b_1} \mid {m_3}^{b_3-a_3}.$$ Since
   $m_1$ and $m_3$ are square-free, this implies that $m_1\mid m_3$
   which gives a
   contradiction. 

\item Suppose $a_2 < b_2$ and
  $a_3 < b_3$.  We have  
   $${m_1}^{a_1}{m_2}^{a_2}{m_3}^{a_3} \mid
  {m_1}^{b_1}{m_2}^{b_2}{m_3}^{b_3}\implies {m_1}^{a_1-b_1} \mid
  {m_2}^{b_2-a_2}{m_3}^{b_3-a_3}.$$ Let $x$ be a variable such that $x
  \mid m_1$. It follows that $x\mid m_2$ or $x\mid m_3$. Assume
  $x\nmid m_3$.  We then have $x^{a_1-b_1}\mid {m_2}^{b_2-a_2}$. Since
  $m_2$ is square-free, it follows that $b_2-a_2\ge
  a_1-b_1$. This is a contradiction, since
$$a_1-b_1=b_2-a_2+b_3-a_3>b_2-a_2.$$

 A similar contradiction is obtained when $x\nmid m_2$. We conclude
 that for every $x \mid m_1$, we must have $x\mid m_2$ and $x\mid
 m_3$. This implies $m_1 \mid m_2$ and $m_1\mid m_3$,
 which is a contradiction.
 
\item Suppose $a_2<b_2$ and $a_3>b_3$. After relabeling, this case reduces
  to Case (1).
\end{enumerate}
\end{proof}

\begin{remark} Note that the square-free assumption is necessary
  in \cref{p:Lr=L(I)}, for if $I = (x^2y, yz^2, xyz)$, then $m_1m_2
  \mid {m_3}^2$ in $I^2$, but $(1, 1, 0) \neq (0,0,2)$ in
  $\N_3^2$.
\end{remark}

The next proposition shows that the vertices labeled by $r\be_i$ belong
to the induced subcomplex $\Lr(I)$ for all $i\in [q]$, regardless of
the choices made in \cref{d:LrI}. Moreover, if $q\geq 2$, then for
each $i\in [q]$ there exists some $j\in[q]\setminus \{i\}$ such that
the vertex labeled by $(r-1)\be_i + \be_j$ belongs to $\Lr(I)$.

In what follows we use the standard notation $\LCM(I^r)$ to denote the
{\bf lcm lattice} of $I^r$, which is the set of all least common
multiples of subsets of the minimal monomial generating set of $I^r$,
partially ordered by division.

\begin{proposition}\label{p:irredundant-gens-Ir}
Let $r \geq 1$, $I$ an ideal in $S$ minimally generated by square-free
monomials $m_1,\ldots,m_q$, and $i\in[q]$.
\begin{enumerate}
\item[(i)] If $\bma \mid {m_i}^r$, then $\ba=r\be_i$, for any $\ba \in \Nrq$.
\item[(ii)] If $q \geq 2$ and ${m_i}^{r-1} \mid M$ for some monomial $M \neq {m_i}^r$ with $M \in \LCM(I^r)$, then there exists $j\in [q]\setminus \{i\}$ such that $m_j
  \mid M$ and, for every $\ba \in \Nrq$,
  $$\bma \mid {m_i}^{r-1}m_j \iff
    \bma =    {m_i}^{r-1}m_j    \iff
    \ba=(r-1)\be_i+\be_j.
  $$
\end{enumerate}

In particular, for all $i$, and all $j$ as in~$(ii)$,
$$r\be_i \qand (r-1)\be_i+\be_j$$ are vertices of $\Lr(I)$, and
$${m_i}^r \qand {m_i}^{r-1}m_j$$ are minimal monomial
generators of $I^r$.
\end{proposition}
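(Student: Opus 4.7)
I would prove (i) first by a direct support argument, then use it to handle (ii), and finally derive the ``In particular'' clause from both.

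For (i), assume $\bma\mid{m_i}^r$ with $\ba\in\Nrq$. If $\ba\neq r\be_i$, pick $\ell\neq i$ with $a_\ell\geq 1$. For every variable $x$ dividing $m_\ell$, $\deg_x(\bma)\geq a_\ell\geq 1$, so $\deg_x({m_i}^r)\geq 1$, forcing $x\mid m_i$. Thus every variable of $m_\ell$ also divides $m_i$, and square-freeness of $m_\ell$ yields $m_\ell\mid m_i$, contradicting the minimality of the generators of $I$.

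For (ii), the plan is first to produce $j$ and then to verify the divisibility iff. Since $M\in\LCM(I^r)$ and $M\neq{m_i}^r$, $M$ is an lcm of minimal generators of $I^r$ at least one of which, say $\bm^\bb$, differs from ${m_i}^r$; by (i) this forces $\bb\neq r\be_i$, so some $b_j\geq 1$ with $j\neq i$, and $m_j\mid\bm^\bb\mid M$. For the divisibility iff, the key step is an inductive argument on $r$: given $\bma\mid{m_i}^{r-1}m_j$, if $a_j\geq 1$ then $\bm^{\ba-\be_j}\mid{m_i}^{r-1}$ with $\ba-\be_j\in\N^{r-1}_q$, and applying (i) at exponent $r-1$ gives $\ba-\be_j=(r-1)\be_i$, hence $\ba=(r-1)\be_i+\be_j$. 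The main obstacle is excluding the case $a_j=0$: such an $\ba$ would either strictly divide ${m_i}^{r-1}m_j$ (so ${m_i}^{r-1}m_j$ is not a minimal generator of $I^r$) or give a distinct $\Nrq$-preimage of ${m_i}^{r-1}m_j$ (a binomial-relation-induced redundancy). The plan is to exclude this failure by extracting $j$ not from an arbitrary minimal generator but from one $\bm^\bb$ chosen with an extremal property---for instance, minimizing $b_i$ among all $\bb\in\Nrq$ representing minimal generators of $I^r$ that divide $M$ and differ from ${m_i}^r$---so that an $a_j=0$ alternative representation would contradict this extremality.

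The ``In particular'' clause then follows readily from \cref{d:LrI}: by (i), $r\be_i$ is the unique element of its $\sim$-equivalence class in $\Nrq$ and no $\bm^\bc$ with $\bc\neq r\be_i$ divides ${m_i}^r$, so $r\be_i$ survives Steps~1--2 of the construction of $V$ and is a vertex of $\Lr(I)$, while ${m_i}^r$ is a minimal generator of $I^r$. The two iff's in (ii) analogously guarantee that $(r-1)\be_i+\be_j$ is the unique representative of its equivalence class and that ${m_i}^{r-1}m_j$ is a minimal generator of $I^r$, placing $(r-1)\be_i+\be_j$ in $V$ as well.
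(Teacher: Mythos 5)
Parts (i) and the ``in particular'' clause of your proposal are fine, and your reduction of the case $a_j\ge 1$ to (i) by cancelling a copy of $m_j$ is exactly right (it even forces $a_j=1$, since $(r-1)\be_i$ has $j$-th coordinate zero). You have also correctly isolated the real difficulty: ruling out $\ba$ with $a_j=0$. But your proposed fix --- choosing $\bb$ to minimize $b_i$ among minimal generators of $I^r$ dividing $M$ and then taking $j$ to be any index with $b_j\ge 1$ --- does not work. Take $I=(xy,yz,zw,xw)=(m_1,m_2,m_3,m_4)$, $r=2$, $i=1$, and $M=\lcm({m_1}^2,{m_3}^2)=x^2y^2z^2w^2$. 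Then ${m_3}^2=z^2w^2$ is a minimal generator of $I^2$ dividing $M$, and $\bb=2\be_3$ achieves the minimal value $b_1=0$, forcing $j=3$. But $m_2m_4=xyzw=m_1m_3$, so $\ba=\be_2+\be_4$ satisfies $\bma\mid m_1m_3$ with $a_3=0$ and $\ba\ne\be_1+\be_3$: the asserted equivalence fails for $j=3$. The same example shows that the choice of $j$ cannot be read off from the exponent vectors of generators dividing $M$ at all; it must be made using the variable supports of the $m_k$ themselves.

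The paper's proof handles this by setting $A=\{k\in[q]:m_k\mid M\}$ and choosing $j\in A\smallsetminus\{i\}$ so that $m_j$ has the \emph{fewest variables not dividing $m_i$} (in the example above this selects $j\in\{2,4\}$, never $j=3$). Then, assuming $a_j=0$ and $a_i\le r-2$, one finds $c,d\notin\{i,j\}$ with $a_c,a_d>0$ and $m_cm_d\mid {m_i}^{r-1}m_j$; writing $m_k=\gcd(m_k,m_i)\cdot m_k'$ one deduces $m_c'm_d'\mid m_j'$, whence $m_c,m_d\mid M$, so $c,d\in A$, and the minimality of $\deg m_j'$ then gives $\deg m_c'+\deg m_d'\le \deg m_j'\le\min\{\deg m_c',\deg m_d'\}$, a contradiction. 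This gcd-and-degree argument is the essential missing ingredient in your proposal; without it (or a genuine substitute) the second bullet of (ii) is unproved, and the counterexample above shows the gap is not merely expository.
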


  \begin{proof} We first observe that, for $g,h>0$ and $u,v\in [q]$, we have:
    \begin{equation}\label{e:fact}
                {m_u}^{h} \mid {m_v}^{g}\quad   \Longrightarrow  \quad u=v 
    \end{equation}
Indeed, since $m_v$ is square-free, if $m_u^{h}$ divides $m_v^g$, then  $m_u\mid m_v$, and we conclude $u=v$ because $m_u, m_v$ are minimal generators. 
    
  Let $\ba=(a_1,\ldots,a_q) \in \Nrq$, and suppose $\bma \mid
  {m_i}^r$. If, for some $j \in [q]$, $a_j \neq 0$ then ${m_j}^{a_j}
  \mid {m_i}^r$, which by \eqref{e:fact} implies that $j=i$, which
  results in $\ba=r\be_i$. Thus $(i)$ holds.

  We now prove~$(ii)$. If $r=1$ the statement is trivial, so assume $r, q \geq 2$ and $M \in \LCM(I^r)$
  satisfies ${m_i}^{r-1}\mid M$ but $M \neq {m_i}^r$. Then there
  exists $k\in [q]$ with $k \neq i$ and $m_k \mid M$. Let $$A=\{k\in
  [q]\colon m_k\mid M\}.$$ Choose $j \in A \smallsetminus\{i\}$ such that $m_j$
  has the fewest number of variables not dividing $m_i$. 

  Now suppose for some $\ba=(a_1,\ldots,a_q) \in \Nrq$,
  \begin{equation}\label{e:bma}
    \bma={m_1}^{a_1}\cdots {m_q}^{a_q} \mid {m_i}^{r-1}m_j
  \end{equation}

  If $a_j \geq 2$, by canceling a copy of $m_j$ in \eqref{e:bma} we
  will have $m_j^{a_j-1} \mid {m_i}^{r-1}$ which by \eqref{e:fact}
  implies that $i=j$, a contradiction.  Similarly if $a_i = r$ then
  by canceling a copy of ${m_i}^{r-1}$ in \eqref{e:bma} we obtain
  $m_i \mid m_j$, again a contradiction since $m_i$ and $m_j$ are minimal generators.
  So we must have $a_j \leq 1$ and $a_i \leq r-1$.

  Now we claim that we must have $a_j=1$ or $a_i=r-1$. Otherwise, if
  $a_j=0$ and $a_i \leq r-2$, since $a_1+\cdots + a_q=r$, there must exist $c,d\in
  [q]\smallsetminus\{i,j\}$ with $a_c,a_d>0$ ($c$ and $d$ could be equal). In particular by \eqref{e:bma}
\begin{equation}\label{e:cd}
m_cm_d \mid {m_i}^{r-1}m_j.
\end{equation}
Let 
\begin{equation}\label{e:cj}
  m_c=\gcd(m_c,m_i) \cdot m_c', \quad
  m_d=\gcd(m_d,m_i) \cdot m_d' \qand
  m_j=\gcd(m_j,m_i) \cdot m_j'.
\end{equation}

From \eqref{e:cj} one can see that $m'_c$ and $m'_d$ do not share any variables with $m_i$, and so by \eqref{e:cd}
\begin{equation}\label{e:primes-div}
  m_c'm_d' \mid m_j' 
\end{equation} and also
\begin{equation}\label{e:primes}
  m_c' \mid m_j' \mid m_j \mid M \qand
  m_d' \mid m_j' \mid m_j \mid M.
\end{equation}
On the other hand $m_i \mid M$, so \eqref{e:cj} and \eqref{e:primes},
together with the fact that $m_c$ and $m_d$ are square-free, imply
that $$m_c \mid M \qand m_d \mid M,$$ which in turn implies that $c,d
\in A$. Assume without loss of generality that $\deg m_c' \leq \deg
m_d'$. The fact that $j \in A$ was picked so that $m_j$ has the fewest
number of variables outside $m_i$ and \eqref{e:primes-div} together
imply
$$  \deg m_c'+\deg m_d ' \leq
\deg m_j' \leq \deg m_c' \leq \deg m_d'; $$
a contradiction since $m'_c$, $m'_d$ and $m'_j$ all have positive degrees.

So the only possibilities are $a_j=1$ or $a_i=r-1$.  If $a_j=1$
then by~$(i)$ we have $\bma ={m_i}^{r-1}m_j$, and we are done.  If
$a_i=r-1$ we have $\bma ={m_i}^{r-1}m_k \mid {m_i}^{r-1}m_j$ for some
$k \in [q]$, hence by cancellation $k=j$ and 
$\bma={m_i}^{r-1}{m_j}$.

The remaining statements now follow immediately from $(i)$, $(ii)$ and
\cref{d:LrI}.
\end{proof}

One additional lemma is necessary before the
statement of our main result.

\begin{lemma}\label{l:non-div}
Using \cref{n:rqs}, let $i,j \in [q]$ with $i
\not = j$, and let $\ba=(a_1, \ldots, a_q)$ and $\bb=(b_1, \ldots,
b_q)$ be in $\Nrq$. If $\alpha$ is a non-negative integer such that
$$a_i \ge \alpha\qand b_j> r-\alpha
$$
Then
\begin{enumerate}
\item[(i)] $\bmb \nmid \bma$;
  \smallskip

\item[(ii)] 
${m_i}^{\alpha}{m_j}^{a_i-\alpha}\frac{\bma}{{m_i}^{a_i}}\mid \lcm(\bma, \bmb)$;

\item[(iii)] $\deg({m_i}^{\alpha}{m_j}^{a_i-\alpha}\frac{\bma}{{m_i}^{a_i}}) \leq
  \deg(\bma) \iff \deg(m_i) \geq \deg (m_j)$ or $a_i=\alpha$.

\end{enumerate}
\end{lemma}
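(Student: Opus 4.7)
The plan is to translate every divisibility statement to the variable-exponent level, exploiting that each $m_k$ is square-free, so the exponent of a variable $x$ in $\bma$ equals $\sum_{k\,:\,x\mid m_k}a_k$, and analogously for $\bmb$. A key standing fact is that $\sum_k a_k=\sum_k b_k=r$, which gives the slack bounds $\sum_{k\ne i}a_k\le r-\alpha$ (since $a_i\ge\alpha$) and $b_j\ge r-\alpha+1$ (since $b_j>r-\alpha$ with integer entries). I will also use that $m_i,m_j$ are distinct minimal generators, so $m_j\nmid m_i$.

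For~(i), since $m_j\nmid m_i$ I pick a variable $x$ with $x\mid m_j$ and $x\nmid m_i$. Then the $x$-exponent of $\bmb$ is at least $b_j\ge r-\alpha+1$, while the $x$-exponent of $\bma$ is $\sum_{k\,:\,x\mid m_k}a_k\le\sum_{k\ne i}a_k\le r-\alpha$, ruling out $\bmb\mid\bma$.

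For~(ii), set $N={m_i}^{\alpha}{m_j}^{a_i-\alpha}\bma/{m_i}^{a_i}$, which is well defined because $a_i\ge\alpha$. It suffices to verify variable-by-variable that the $x$-exponent of $N$ is bounded by $\max$ of the $x$-exponents of $\bma$ and $\bmb$. I split into the four cases by whether $x$ divides $m_i$ and/or $m_j$. Three of the four cases reduce immediately to the $\bma$ side: if $x\mid m_i$ and $x\mid m_j$, or if $x\nmid m_i$ and $x\nmid m_j$, the $x$-exponents of $N$ and $\bma$ coincide; if $x\mid m_i$ and $x\nmid m_j$, using $\alpha\le a_i$ one gets exponent of $N$ at most that of $\bma$. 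The interesting case is $x\mid m_j$, $x\nmid m_i$, where the $x$-exponent of $N$ is $(a_i-\alpha)+\sum_{k\ne i,\,x\mid m_k}a_k\le(a_i-\alpha)+(r-a_i)=r-\alpha$, which is strictly less than $b_j$, hence at most the $x$-exponent of $\bmb$.

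For~(iii), the computation is direct: $\deg(N)-\deg(\bma)=\alpha\deg(m_i)+(a_i-\alpha)\deg(m_j)-a_i\deg(m_i)=(a_i-\alpha)\bigl(\deg(m_j)-\deg(m_i)\bigr)$, and since $a_i-\alpha\ge 0$, this is nonpositive iff $a_i=\alpha$ or $\deg(m_i)\ge\deg(m_j)$. The only real obstacle is executing the case split in~(ii) carefully; once the variable-wise description of $\bma$, $\bmb$, and $N$ is in hand, the three parts follow from elementary bookkeeping with the constraints $\sum a_k=\sum b_k=r$, $a_i\ge\alpha$, and $b_j>r-\alpha$.
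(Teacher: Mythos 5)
Your proposal is correct and follows essentially the same route as the paper: part (i) via a variable of $m_j$ not dividing $m_i$ whose exponent in $\bmb$ exceeds the bound $r-a_i\le r-\alpha$ forced by square-freeness, part (ii) via the same variable-by-variable case split on whether $x$ divides $m_i$ and/or $m_j$ (with the decisive case $x\mid m_j$, $x\nmid m_i$ handled identically), and part (iii) by the same degree computation. The only cosmetic difference is that the paper phrases (i) as a contradiction ending in $m_j\mid m_i$, whereas you exhibit the witnessing variable directly.
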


\begin{proof} To prove $(i)$, 
assume $\bmb\mid \bma$. Set
$$
\ba'=\ba-a_i\be_i\in \N^{r-a_i}_q
$$
so that $$\bma=\m^{\ba'}{m_i}^{a_i}\,. $$

Let $x$ be a variable such that  $x\mid m_j$. Then $x^{b_j} \mid \bmb$ and hence $x^{b_j}\mid \bma$.  Suppose $x$ does not divide $m_i$. Then $x^{b_j}\mid\m^{\ba'}$. Since $\m^{\ba'}$ is a product of $r-a_i$ square-free monomials, we have then 
$$
b_j\le r-a_i\,.
$$
The hypothesis on $a_i$ and $b_j$ gives 
$$r-a_i\le r-\alpha< b_j\,.$$
The  contradiction that arises shows $x\mid m_i$. 
 We conclude $m_j\mid m_i$, hence $i=j$, a contradiction. 

 Now we prove~$(ii)$. 
Set 
$$ \ba'=\ba-a_i\be_i\in \N^{r-a_i}_q \qand \bb'=\bb-b_j\be_j\in
\N^{r-b_j}_q
$$
so that $$\bma=\m^{\ba'}{m_i}^{a_i}\qand \bmb=\m^{\bb'}{m_j}^{b_j}\,.$$
  It
  will be shown that 
$$\deg_x({m_i}^{\alpha}{m_j}^{a_i-\alpha}\m^{\ba'}) \leq \max\{ \deg_x(\bma), \,
  \deg_x(\bmb)\}$$ for all variables $x$, where for a monomial $M$, $\deg_x(M) = \max\{t
  \st x^t \mid M\}.$
    
Let $u = \deg_x(\m^{\ba'})$ and  $v=\deg_x(\m^{\bb'})$.  Since  $\m^{\ba'}$ is a product of $r-a_i$ square-free monomials, and $\m^{\bb'}$ is a product of $r-b_j$ square-free monomials, we have 
\begin{equation}\label{e:uv-ab}
  0 \le u \le r-a_i\qand 0 \le v \le r-b_j.
\end{equation}
Also note that
  $$\deg_x({m_i}^{\alpha}{m_j}^{a_i-\alpha}\m^{\ba'}) = \alpha \cdot \deg_x(m_i)+ (a_i-\alpha)\cdot \deg_x(m_j)+ u.$$

\begin{itemize}
  \item If $x \mid m_i$, then  
    \begin{align*}
      \deg_x({m_i}^{\alpha}{m_j}^{a_i-\alpha}\m^{\ba'}) = & \, \alpha+  (a_i-\alpha)\deg_x(m_j) + u\\
      \leq & \, \alpha+  (a_i-\alpha) + u
      = a_i+u\\
      = & \, \deg_x(\bma) \\
      \leq & \, \max \{ \deg_x(\bma), \deg_x(\bmb) \}.
    \end{align*}

   \item If $x \nmid m_i$, but $x \mid m_j$, then 
     \begin{align*}
     \deg_x({m_i}^{\alpha}{m_j}^{a_i-\alpha}\m^{\ba'}) = & \, (a_i-\alpha) + u\\
      \leq  & \, (a_i-\alpha) + (r-a_i)  \quad \mbox{by \eqref{e:uv-ab}}\\
      = & \, r-\alpha<  b_j \le  b_j+v \\
      = & \, \deg_x(\bmb)\\
      \leq & \, \max \{ \deg_x(\bma), \deg_x(\bmb) \}.
    \end{align*}

   \item If $x\nmid m_i$ and $x\nmid m_j$, then
     $$\deg_x({m_i}^\alpha{m_j}^{a_i-\alpha}\m^{\ba'}) = \deg_x(\m^{\ba'}) = \deg_x(\bma) \leq \max \{
     \deg_x(\bma), \deg_x(\bmb) \}.$$
\end{itemize}

   This finishes the proof of~$(ii)$.

   Finally,~$(iii)$ follows directly from the fact that
\begin{align*}
\deg({m_i}^{\alpha}{m_j}^{a_i-\alpha}\m^{\ba'}) &= \alpha \cdot \deg m_i+(a_i-\alpha) \cdot \deg m_j + \deg (\m^{\ba'}) \\
&= \alpha \cdot \deg m_i+(a_i-\alpha) \cdot \deg m_j + \deg (\bma) - a_i \cdot \deg m_i \\
&= \deg(\bma)-(a_i-\alpha)(\deg(m_i)-\deg(m_j)). 
\end{align*}
\end{proof}

\begin{remark}
\label{r:connection}
Recall that $s=\left\lceil{\frac{r}{2}}\right\rceil$. When $\alpha=s$, the condition on $b_j$ in \cref{l:non-div} can be translated as follows: 
$$
b_j>r-s\iff \begin{cases} \text{$b_j\ge s$ \quad when $r$ is odd}\\
\text{$b_j>s$ \quad when $r$ is even.}
\end{cases} 
$$
\end{remark}

Recall that, by \cref{t:res-by-quasitree}, if $\D$ is a quasi-tree
with vertices labeled with the monomial generating set of a monomial
ideal $I \subseteq \sfk[x_1, \ldots, x_n]$, then $\D$ supports a
resolution of $I$ if and only if $\D_M$ is connected for every
monomial $M$. It is easy to check that the connectivity need only be
verified for $M$ in $\LCM(I)$.  We now use this criterion to prove the
main result of the paper.

\begin{theorem}[{\bf Main Result}]\label{t:main}
  Let $q\ge 1$ and let $I$ be a monomial ideal minimally generated by
  square-free monomials $m_1, \dots, m_q$.  By labeling each vertex
  $\ba$ of the simplicial complexes below with the monomial $\bma$, the
  following hold for any $r\ge 1$:
\begin{enumerate} 
\item $\Lrq$ supports a free resolution of $I^r$;
\item $\Lr(I)$ supports a free resolution of $I^r$.
\end{enumerate}
\end{theorem}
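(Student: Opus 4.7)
The plan is to reduce both statements to a connectivity check via the quasi-tree criterion \cref{t:res-by-quasitree}. For part~(1), \cref{p:Lr-quasitree} tells us that $\Lrq$ is a quasi-tree, and \cref{r:larger-Taylor} allows us to label its vertices with the (possibly non-minimal) generating set $\{\bma : \ba \in \Nrq\}$ of $I^r$. It therefore suffices to show that for every $M \in \LCM(I^r)$, the induced subcomplex $(\Lrq)_M$ on the vertex set $V(M) := \{\ba \in \Nrq : \bma \mid M\}$ is empty or connected.

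To prove connectivity of $(\Lrq)_M$, I would show that every $\ba \in V(M)$ is joined in $(\Lrq)_M$ to a vertex of $B^r \cap V(M)$ whenever the latter is non-empty, exploiting that any two vertices of $B^r \cap V(M)$ lie in the common facet $F^r_1$ and hence in a common face. The argument splits by the facet type of $\ba$. If $\ba \in F^r_i \cap V(M)$ with $a_i \geq s+1$, then I would invoke \cref{l:non-div} with $\alpha = s$ and an appropriate witness $\bb \in V(M)$ to produce a new vertex $\bc = s\be_i + (a_i-s)\be_j + (\ba - a_i \be_i) \in V(M)$ with smaller $i$-th coordinate; iterating until all coordinates fall in $[0,s]$ lands in $B^r$, with $\ba$ and the resulting vertex sharing the facet $F^r_i$. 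If $\ba = r\be_i$, then either $V(M) = \{\ba\}$ (trivially connected) or \cref{p:irredundant-gens-Ir}(ii) yields some $(r-1)\be_i + \be_k \in V(M)$, which shares $G^r_i$ with $r\be_i$ and also sits in $F^r_i$, so reducing to the previous case.

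The main obstacle will be guaranteeing the existence of a witness $\bb \in V(M)$ with $b_j > r-s$ for some $j \neq i$ to drive the \cref{l:non-div} reduction; when no such witness exists, a case analysis should show that $V(M)$ is already trivially connected (for instance, it sits inside a single facet). Part~(iii) of \cref{l:non-div} provides the degree control needed for the iterated reduction to remain in $V(M)$, and the small cases $r \in \{1,2,3\}$ or $q \in \{1,2\}$ will be handled separately using the simpler facet descriptions in \cref{p:Lrq}.

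For part~(2), $\Lr(I)$ is an induced subcomplex of $\Lrq$ on vertices whose labels form a minimal monomial generating set of $I^r$. By \cref{p:qtree-induced} it is a quasi-forest, and since by \cref{p:qtree-collapsible} every connected component of a quasi-forest is contractible, the argument of \cref{t:res-by-quasitree} extends to quasi-forests; thus it suffices to check each $(\Lr(I))_M$ is empty or connected. The key observation is that Step~1 of \cref{d:LrI} preferentially selects representatives from $B^r$, and that by \cref{p:irredundant-gens-Ir} the vertices $r\be_i$ and $(r-1)\be_i + \be_k$ form singleton equivalence classes and therefore always lie in $\Lr(I)$. A path in $(\Lrq)_M$ constructed in part~(1) can then be transported to a path in $(\Lr(I))_M$ by replacing each discarded intermediate vertex with its chosen representative, which carries the same monomial label and, being in $B^r$ whenever possible, remains adjacent in the subcomplex.
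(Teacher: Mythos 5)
Your part~(1) skeleton matches the paper's: reduce to connectivity of the induced subcomplexes $(\Lrq)_M$ via \cref{t:res-by-quasitree}, use \cref{p:irredundant-gens-Ir} to handle the vertices $r\be_i$, and use \cref{l:non-div}(ii) with $\alpha=s$ to manufacture a vertex of $B^r\cap V(M)$. Two remarks there. First, no iteration is needed: the single vertex $\bc=s\be_i+(a_i-s)\be_j+(\ba-a_i\be_i)$ already lies in $B^r$, since $c_j=a_i-s+a_j\le r-s\le s$ and $c_k=a_k\le r-a_i<s$ for $k\ne i,j$. Second, the ``main obstacle'' you flag --- producing a witness $\bb$ with $b_j>r-s$ --- is resolved in the paper not by a case analysis on $V(M)$ but by organizing the argument around facets: if two distinct indices $i\ne j$ both have $(F^r_i\cup G^r_i)\cap V_M\ne\emptyset$ and $(F^r_j\cup G^r_j)\cap V_M\ne\emptyset$, then (assuming the base is empty, for contradiction) each populated layer supplies the witness for the other, since $b_j>s\ge r-s$; if only one index is populated, $V_M$ sits in $F^r_i\cup G^r_i$ and connectivity follows from $G^r_i\cap F^r_i\cap V_M\ne\emptyset$. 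You would need to spell this out, but the idea is within reach of your setup.

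The genuine gap is in part~(2). Your plan is to transport a path in $(\Lrq)_M$ to $(\Lr(I))_M$ by ``replacing each discarded intermediate vertex with its chosen representative, which carries the same monomial label.'' This is false for vertices discarded in Step~2 of \cref{d:LrI}: such a vertex $\bc$ is eliminated because some $\bc'\in V$ satisfies $\m^{\bc'}\mid\m^{\bc}$ with $\m^{\bc'}\ne\m^{\bc}$, so the surviving vertex carries a strictly smaller label and, more importantly, its coordinate vector is unconstrained --- it need not lie in $B^r$, nor in any facet containing the neighbors of $\bc$ on your path, so adjacency is not preserved. This is exactly where the paper does real work: it chooses $\ba$ of \emph{minimal degree} in $(F^r_i\cup G^r_i)\cap V_M$, and when the constructed base vertex $\bc$ is not in $V$ it takes $\bc'\in V$ with $\m^{\bc'}\mid\m^{\bc}$, uses \cref{l:non-div}(i) to force $c'_i>s$ (so $\bc'\in(F^r_i\cup G^r_i)\cap V_M$), and uses \cref{l:non-div}(iii) together with the hypothesis $\deg(m_i)\ge\deg(m_j)$ to get $\deg(\m^{\bc'})<\deg(\bma)$, contradicting minimality. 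Note that this extremal choice, and parts (i) and (iii) of \cref{l:non-div}, are needed precisely for $\Lr(I)$ and play no role in part~(1) (where $V=\Nrq$, so $\bc\in V(M)$ automatically); your proposal invokes (iii) for ``degree control'' in the part~(1) reduction, where it is not needed, and omits it where it is. Without some replacement for this argument, part~(2) does not follow from part~(1).
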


\newcommand{\Ll}{\mathbb{L}}

\begin{proof} 
In this proof, we let $\Ll$  denote either $\Lrq$ or  $\Lr(I)$.  Fix $\Ll$ as one of these two choices. Let $V$ denote the set of vertices of $\Ll$ and let
$\{m_1,\ldots,m_q\}$ be the minimal monomial generating set of $I$.
Following \cref{n:rqs}, for $\ba = (a_1, \ldots, a_q) \in
\Nrq$ we let $\bma = {m_1}^{a_1} \cdots {m_q}^{a_q}$ and $\be_1, \ldots,
\be_q$ denote the standard basis vectors for $\mathbb R^q$.

We willl show that for every $M$ in $\LCM(I^r)$, ${\Ll}_M$ is empty or connected, where ${\Ll}_M$ is the induced subcomplex of the complex $\Ll$ on
  the set 
$$V_M=\{\ba \in V \st \bma \mid M\}\,.$$ 
By \cref{p:Lr-quasitree}, $\Lrq$ is a quasi-tree and by \cref{p:qtree-induced}, $\Lr(I)$ is a quasi-forest, which is connected and thus a quasi-tree.  In view of \cref{t:res-by-quasitree}, we can then conclude that $\Ll$ supports a free resolution of $I^r$.

Suppose $M \in \LCM(I^r)$ and ${\Ll}_M\neq \emptyset$. If $M = {m_i}^r$ for some $i\in [q]$, then
${\Ll}_M$ is a point by \cref{p:irredundant-gens-Ir}~$(i)$, and
hence is connected.

\smallskip

Assume $M\ne
{m_i}^r$ for all $i\in [q]$. Note that this implies $q>1$.

The facets of ${\Ll}_M$ are the maximal sets, under inclusion, among the
sets 
\begin{equation}
\label{e:list}
F^r_1\cap V_M, \ldots, F^r_{q}\cap V_M, G^r_1\cap V_M,  \ldots, 
G^r_{q}\cap V_M.
\end{equation}
Note that not all these sets  are facets of ${\Ll}_M$, but  all
facets of ${\Ll}_M$ are among those listed in \eqref{e:list}.
We refer to the facets of ${\Ll}_M$ of form
$F^r_i \cap V_M$ as {\it the first layer},  and those of the form $G^r_i \cap V_M$ as {\it the second layer}. We refer to $B^r\cap
V_M$ as the {\it base} of ${\Ll}_M$. The base $B^r\cap V_M$
could become empty, depending on the choice of $M$.

We use the faces in \eqref{e:list} to argue the connectedness of
${\Ll}_M$ as follows: Claim~1 below shows that any facet of
${\Ll}_M$ in the second layer is connected to a facet in the first
layer. Claim~2 implies that any two facets of ${\Ll}_M$ that are in
the first layer connect through the nonempty base.  The combination of
these two facts implies that ${\Ll}_M$ is connected, which will end
our proof.

\subsection*{\bf Claim 1: (The second layer facets of ${\Ll}_M$
  intersect the first layer facets)} {\it For any $i\in [q]$ we have:}
$$G^r_{i}\cap V_M\ne \emptyset\implies G^r_{i}\cap F^r_{i}\cap
  V_{M}\ne \emptyset .$$

\subsubsection*{Proof of Claim~1} 
Assume $G^r_{i}\cap V_M\ne \emptyset$ for some $i\in [q]$.
By \cref{d:Lrq}, there exists $a\in [q]$ such that $$\bd = (r-1)\be_i +
\be_a \in G^r_{i}\cap V_M\,.$$

If $i\ne a$, then $\bd\in F^r_i$ as well, and hence
$G^r_{i}\cap F^r_{i}\cap V_M \ne \emptyset$ as desired.

Assume $a=i$. Then $\bd\in V_M$ implies ${m_i}^r\mid M$. Since $M\neq
{m_i}^r$, \cref{p:irredundant-gens-Ir} guarantees that there exists
$j\in [q]$ with $j \neq i$ and $m_j \mid M$ so that $(r-1)\be_i+\be_j
\in V$.

 Since ${m_i}^r\mid M$ and $m_j\mid M$, we see that ${m_i}^{r-1}m_j\mid
 M$.  Indeed, we set $M={m_i}^rM'$ and, since $m_j$ is square-free, we
 have $m_j\mid m_iM'$, hence ${m_i}^{r-1}m_j\mid {m_i}^rM'=M$.
Thus we have
$$(r-1)\be_i+\be_j \in G^r_{i}\cap F^r_{i}\cap V_M.$$

\subsection*{\bf Claim 2: (The first layer facets connect through the
  base of ${\Ll}_M$)} {\it For any $i,j\in [q]$ with $i\ne j$ we
  have:}
$$ \mbox{\rm{if}} \quad (F^r_i\cup G^r_{i})\cap V_M\ne \emptyset\quad\text{and}\quad
 (F^r_j\cup G^r_{j})\cap V_M\ne \emptyset \quad \mbox{\rm{then}} \quad B^r \cap V_M\ne
 \emptyset .$$

 \subsubsection*{Proof of Claim~2}
 Assume for some $i,j\in [q]$ with $i\ne j$ that 
$$ (F^r_i\cup G^r_{i})\cap V_M\ne \emptyset \qand
 (F^r_j\cup G^r_{j})\cap V_M\ne \emptyset.$$
Without loss of generality, assume $\deg(m_i)\ge \deg(m_j)$. We choose 
 $$\ba = (a_1, \ldots, a_q) \in (F^r_i\cup G^r_{i})\cap V_M\qand
 \bb = (b_1, \ldots, b_q) \in (F^r_j\cup G^r_{j})\cap V_M$$
 such that 
 \begin{equation}
 \label{e:choose-a}
 \deg(\bma)=\min\{\deg(\m^\bd)\mid \ \bd\in (F^r_i\cup G^r_{i})\cap V_M\}\,.
 \end{equation}
 Assume, by way of contradiction, that $B^r\cap V_M=\emptyset$.  
   Since $\ba, \bb \in V_M$, we have then $\ba, \bb \notin B^r$.
   Therefore, $$s <a_i,b_j \le r \qand \bma, \bmb \mid M.$$

 Set $$\ba'=\ba-a_i\be_i\in \N^{r-a_i}_q \qand \bc := (c_1, \ldots, c_q) =s \be_i + (a_i-s) \be_j + \sum_{1\le k\le
   q, k\ne i} a_k\be _k\in \Nrq.$$
 
 Since $b_j > s \geq r-s$, by \cref{l:non-div}~$(ii)$
 and \cref{r:connection},
 \begin{equation}
 \label{e:div2}
 \m^{\bc} ={m_i}^{s}{m_j}^{a_i-s}\m^{\ba'} \mid \lcm(\bma, \bmb) \mid M.
 \end{equation}
 
  Moreover $\bc\in B^r$, because
\begin{align*}
c_i&=s &\\
c_j&= a_i-s+a_j\le a_i-s+(r-a_i)=r-s \le s &\mbox{since } a_i+a_j \leq r\\
c_k&=a_k\leq r - a_i<r-s\le s &\mbox{for all } k \neq i,j.
\end{align*}
 
If $\Ll=\Lrq$, then $V=\Nrq$. Since \eqref{e:div2} shows $\bc\in V_M$, we conclude $\bc\in B^r\cap V_{M}$, a contradiction. This finishes the proof  of part (1). 

Assume now $\Ll=\Lr(I)$. Since we assumed $B^r\cap V_{M}=\emptyset$, we must have  $\bc \notin V$. \cref{d:LrI} implies then $$\m^{\bc'} \mid \m^{\bc}
\qforsome \bc':=(c_1', \ldots, c_q') \in V.$$

Note that $\m^{\bc'}\mid M$ by \eqref{e:div2} and hence $\bc'\in
V_M$. Since $B^r\cap V_{M}=\emptyset$, we must have $\bc'\notin
B^r$. Using the notation in \cref{d:LrI}, let $V_a$ be the equivalence
class of which $\bc'$ is a representative. If $\m^{\bc'}=\m^{\bc}$,
then $\bc\in V_a$ as well, and hence $V_a\cap B^r\neq\emptyset$. Then
Step 1 in \cref{d:LrI} implies $\bc'\in B^r$, a contradiction. Hence
$\m^{\bc'}\ne \m^{\bc}$.
 
 Since  $\bc'\notin B^r$, there exists $k\in [q]$ such that
 $c'_k>s\ge r-s$. Since $c_i=s$ and $\m^{\bc'}\mid \m^{\bc}$, 
 \cref{l:non-div}~$(i)$ implies $k=i$.  In particular,
 $c'_i>s$, and thus  
 $$
   \bc'\in (F^r_i\cup G^r_i)\cap V_M\,. 
 $$
Since $\m^{\bc'}$ strictly divides
$\m^{\bc}={m_i}^{s}{m_j}^{a_i-s}\m^{\ba'}$, using \cref{l:non-div}~$(iii)$, we have 
 $$\deg(\m^{\bc'})<\deg(\m^\bc) \leq
\deg(\bma),$$ contradicting the assumption made in
\eqref{e:choose-a}. This finishes the proof of part (2). 
\end{proof}

\begin{remark}
	One may feel that part $(1)$ of the statement above is weaker than $(2)$, since $\LL^r(I)\subseteq \Lrq$. However, the remarkable aspect of $(1)$ is that, before labeling, $\Lrq$ does not depend on the ideal $I$. Thus the same topological structure, depending only on $r$ and $q$, supports a free resolution of the $r^{th}$ power of {\bf any} ideal generated by $q$ square-free monomials. The idea is that $\Lrq$ provides an alternative notion to the Taylor complex for powers of $I$: when $r=1$, $\Lrq$ is the Taylor simplex, and when $r>1$, $\Lrq$ is significantly smaller than the Taylor simplex but still supports a resolution of $I^r$. 
\end{remark}


\section{Bounds on Betti numbers}\label{s:bounds}

 
 One of the key applications of \cref{t:main} is that we are able to improve the bounds on Betti numbers for powers of ideals from that given by the standard Taylor resolution of $I^r$ since $$\Lr(I)
\subseteq \Lrq \subseteq \Taylor(I^r).$$ This section contains computations that illustrate the extent to which our results improve the Taylor bounds. 

When $r=2$, we were able to provide a concrete formula for the number
of faces of $\Lr(I)$ in~\cite{L2}, and as a result we provided
 bounds for Betti numbers of $I^2$, which are shown in \cref{p:last} to be sharp. 
 When $r>2$, however, such
formulas are not as easy to write and the numbers are very large even
for small examples. In this case we shift our attention to $\Lrq$. While the
bound on the Betti numbers stemming from $\Lr(I)$ is dependent on the
relations among the generators of $I$, one can use the structure of
$\Lrq$ to get a general, albeit weaker, bound on $\beta_t(I^r)$.

\begin{theorem}[{\bf Bounds on Betti numbers of $I^r$}]\label{t:counting}
  If $I$ is a square-free monomial ideal with $q$ minimal generators,
  then the Betti numbers of $I^r$ for $r \geq 2$ are bounded above by
$$\beta_t(I^r) \leq q\left( {{q-1}\choose{t}} + {{f}\choose{t+1}} - {{b}\choose{t+1}}\right) + {{b}\choose{t+1}}$$
  where $t \geq 0$, $b$ is the coefficient of $x^r$ in the expansion of 
$$(1 + x + x^2 + \cdots + x^{\left\lceil{\frac{r}{2}}\right\rceil})^q,$$ 
         and
$$f = \frac{{{q+r-1}\choose{r}} -b-q}{q} + b.$$
In particular,
$\pd (I^r) \leq \max \{ q-1, f-1\}.$
\end{theorem}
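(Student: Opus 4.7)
The plan is to apply \cref{t:main}(1), which states that $\Lrq$ supports a free resolution of $I^r$, and then combine this with \cref{t:BPS} to deduce $\beta_t(I^r)\le f_t(\Lrq)$ for every $t\ge 0$. The work then reduces to computing (an upper bound for) the number of $t$-faces of $\Lrq$ in terms of $b$ and $f$, and identifying $\dim(\Lrq)$ for the projective-dimension estimate.

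First, I would verify that the stated value of $b$ equals $|B^r|$ via a standard generating-function argument: the number of points $(a_1,\dots,a_q)\in\Nrq$ with $a_j\le s$ for all $j$ is exactly the coefficient of $x^r$ in $(1+x+\cdots+x^s)^q$. Next, I would partition $\Nrq$ into three classes: the vertices of $B^r$; the vertices of $A_i^r := F_i^r \setminus B^r$ for $i\in[q]$; and the singletons $\{r\be_i\}$. The key observation is that, since $s=\lceil r/2\rceil$ satisfies $2s\ge r$, no point of $\Nrq$ can have two coordinates strictly exceeding $s$. Hence any vertex outside $B^r$ has exactly one coordinate $a_i$ with $a_i>s$, and that coordinate either equals $r$ (giving a singleton $r\be_i$) or lies strictly between $s$ and $r$ (giving a point of $A_i^r$). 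By symmetry $|A_i^r|$ is independent of $i$, and the identity $|\Nrq|=\binom{q+r-1}{r}$ yields $|A_i^r|=\bigl(\binom{q+r-1}{r}-b-q\bigr)/q$, whence $|F_i^r|=b+|A_i^r|=f$.

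The third step is to count the $t$-dimensional faces of $\Lrq$ by partitioning them into three disjoint families according to which facets contain them. Type (a): faces contained in $B^r$, contributing $\binom{b}{t+1}$. Type (b): faces contained in some $F_i^r$ but not in $B^r$; such a face must contain a vertex of $A_i^r$, and the observation above forces the face to lie in the unique $F_i^r$, so summing over $i$ gives $q\bigl(\binom{f}{t+1}-\binom{b}{t+1}\bigr)$. Type (c): faces contained in some $G_j^r$ but in no $F_i^r$; I would verify that the only vertex of $G_j^r$ lying outside every $F_i^r$ is $r\be_j$, because any other vertex $(r-1)\be_j+\be_k$ (with $k\ne j$) satisfies $a_j=r-1\le r-1$ and $a_k=1\le s$, placing it in $F_j^r$. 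Hence each type-(c) face is a $t$-face of $G_j^r$ containing $r\be_j$, of which there are $\binom{q-1}{t}$; summing over $j$ (disjointly, since $r\be_j\in G_j^r$ only) yields $q\binom{q-1}{t}$. Adding the three contributions gives exactly the stated bound.

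For the projective-dimension bound, $\pd(I^r)\le\dim(\Lrq)$ because $\Lrq$ supports a free resolution. By \cref{p:Lrq} the facets of $\Lrq$ have sizes $f$ (each $F_i^r$) and $q$ (each $G_j^r$), so $\dim(\Lrq)=\max\{f,q\}-1=\max\{f-1,q-1\}$, as claimed. I expect the main obstacle to be ensuring that the three families of $t$-faces are truly disjoint and exhaustive across the degenerate cases listed in \cref{p:Lrq}: when $r\in\{2,3\}$ (where $F_i^r=B^r$, so family (b) vanishes consistently with $f=b$), and when $r=q=2$ (where $B^r$ is not itself a facet but its subsets are still faces of $\Lrq$). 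The unifying structural facts---the partition of $\Nrq$ described above together with the inclusion $G_j^r\setminus\{r\be_j\}\subseteq F_j^r$---hold uniformly for all $r\ge 2$, and once these are in place the inclusion-exclusion collapses to the clean formula in the theorem.
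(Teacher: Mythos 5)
Your proposal is correct and follows essentially the same route as the paper: bound $\beta_t(I^r)$ by the $f$-vector of $\Lrq$ via \cref{t:main}, compute $b=|B^r|$ and $f=|F_i^r|$ exactly as stated, and partition the $t$-faces into the same three families (faces in $B^r$, faces in some $F_i^r$ but not $B^r$, and faces containing some $r\be_i$, which you phrase equivalently as faces in $G_j^r$ lying in no $F_i^r$). Your extra care about disjointness, exhaustiveness, and the degenerate cases is a harmless refinement of the paper's argument.
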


\begin{proof} Let $s=\left\lceil{\frac{r}{2}}\right\rceil$.
Note that $\beta_t(I^r)$ is bounded above by the number of faces of
dimension $t$ of $\Lrq$. To count the faces of dimension $t$, we use the
sets $B^r, F_i^r, $ and $G_i^r$ from \cref{d:Lrq}.

Observe that  the coefficient $b$ of $x^r$ in the expansion of
$$(1 + x + x^2 + \cdots + x^s)^q$$ is exactly the number of
$q$-tuples $(a_1,\ldots,a_q)$ with $a_i\leq s$ and $a_1+\cdots+a_q=r$,
in other words $b=|B^r|$. 

Note that the vertices of $\Lrq$ are formed by selecting $r$ of the original $q$ generators, so using the standard formula for counting with repetition, we have
$$|V(\Lrq)|= {{q+r-1}\choose{r}}.$$
Now $|F_i^r| = |F_j^r|$ for all $i,j$, so let $f=|F_i^r|$. Since
$|G_i^r \setminus F_i^r| =1$, there are $|V(\Lrq)| -q$ vertices in
$\cup _{i=1}^q F_i^r$.  Also $F_i^r \cap F_j^r = B^r$ for all $i,j$ such that $i \not = j$,
so we have

\begin{equation}
\label{e:f}
f=|F_i^r|= {\frac{1}{q}} \big( ( |V(\Lrq)| -q ) - |B^r|\big) +|B^r| = \frac{{{q+r-1}\choose{r}} -q -b}{q} + b.
\end{equation}

To count the number of faces of dimension $t$, that is, faces with
$t+1$ vertices, in $\Lrq$, we separate the faces into three distinct
types.

\begin{enumerate}
\item Faces containing ${m_i}^r$ for some $i$: By the definition of
  $\Lrq$, faces of this type must be contained in $G_i^r$. Since the
  vertex corresponding to ${m_i}^r$ has been fixed, $t$ additional
  vertices of $G_i^r$ are needed. There are $q-1$ such vertices since
  all vertices of $G_i^r$ have the form $(r-1)\be_i + \be_j$ where $1
  \leq j \leq q$. Since there are $q$ choices for $i$, there are
$$q {{q-1}\choose{t}}$$
such faces.

\item Faces contained in $B^r$: Setting $b=|B^r|$ as above, there are 
$${{b}\choose{t+1}}$$
such faces.

\item Faces contained in $F_i^r$ but not in $B^r$:
Recalling that  $f=|F_i^r|$  and $B^r \subseteq F_i^r$, there are
$$q\left( {{f}\choose{t+1}} - {{b}\choose{t+1}} \right)$$
such faces.
\end{enumerate}

Combining the three cases, we see that there are
$$q\left( {{q-1}\choose{t}} + {{f}\choose{t+1}} -
{{b}\choose{t+1}}\right) + {{b}\choose{t+1}}$$ faces of $\Lrq$ of
dimension $t$. Thus for $I^r$,
$$\beta_t(I^r) \leq q\left( {{q-1}\choose{t}} + {{f}\choose{t+1}} - {{b}\choose{t+1}}\right) + {{b}\choose{t+1}}.$$

In particular, if $t>
q-1$ and  $t+1 > f$, we must have $\beta_t(I^r)=0$. Thus
\begin{equation}\label{e:qf}
\pd (I^r) \leq \max \{ q-1, f-1 \}. 
\end{equation}
\end{proof}

\begin{corollary}\label{c:smallr} If an ideal $I$ is minimally
  generated by $q$ square-free monomials, then the Betti numbers of $I^r$ are bounded by
        $$\beta_t (I^r) \leq q {{q-1}\choose{t}} + {{b}\choose{t+1}}$$
        for $r=2,3$, where $b$ is as defined in \cref{t:counting}.  In the case
        where $r =2$, $b = {q\choose 2}$ and the bound reduces to the bound given in \cite{L2}.
\end{corollary}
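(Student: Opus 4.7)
The plan is to leverage Proposition \ref{p:Lrq}, which shows that when $r=2$ and $q>2$, or when $r=3$ and $q\geq 2$, the first-layer facets $F_i^r$ coincide with the base $B^r$, so $\Lrq$ reduces to $\langle B^r, G_1^r, \ldots, G_q^r\rangle$. Consequently, in the identity \eqref{e:f} for $f=|F_i^r|$ we obtain $f=b$, which makes the middle contribution $q\bigl(\binom{f}{t+1}-\binom{b}{t+1}\bigr)$ in the bound of Theorem \ref{t:counting} vanish. The bound then collapses to
$$\beta_t(I^r) \leq q\binom{q-1}{t} + \binom{b}{t+1},$$
which is the desired inequality.

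Rather than invoking Theorem \ref{t:counting} as a black box, one can re-derive this count directly from the simplified structure of $\Lrq$: every face either contains some vertex $r\be_i$, in which case it lies in the unique facet $G_i^r$ and is determined by $t$ additional vertices chosen from the $q-1$ points of $G_i^r\setminus\{r\be_i\}$, or else it lies entirely in $B^r$. Because $G_i^r\setminus\{r\be_i\}\subseteq B^r$ when $r\in\{2,3\}$, these two families are disjoint and partition the $t$-dimensional faces of $\Lrq$, yielding exactly $q\binom{q-1}{t}+\binom{b}{t+1}$ of them. By Theorem \ref{t:main}, together with the bound $\beta_t(I^r)\leq f_t(\Lrq)$ coming from \cref{t:BPS}, this count bounds $\beta_t(I^r)$.

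For the $r=2$ case, set $s=\lceil 2/2\rceil = 1$, so that $b$ is the coefficient of $x^2$ in $(1+x)^q$, namely $\binom{q}{2}$; substituting into the inequality recovers the bound from \cite{L2}. The main step is really just an identification of when $F_i^r=B^r$ in $\Lrq$; the only mild obstacle is checking the low-$q$ degenerate cases (in particular $r=2$ with $q\leq 2$ and $r=1$), where the description of $\Lrq$ differs in Proposition \ref{p:Lrq}. In each of these cases $\Lrq$ is either a single vertex or has two facets meeting in a single point, and the claimed formula is verified by direct inspection against the explicit simplicial complex.
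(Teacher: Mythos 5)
Your proposal is correct and takes essentially the same route as the paper: the paper's proof is precisely the observation that $F_i^r=B^r$ for $r=2,3$, hence $f=b$ and the middle term in \cref{t:counting} vanishes, followed by the computation $b=\binom{q}{2}$ for $r=2$. Your direct recount and your check of the degenerate low-$q$ cases are consistent with (and subsumed by) the general formula, so they add reassurance but no new argument.
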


\begin{proof}
When $r=2$ or $r=3$, then $F_i^r=B^r$ for all $i$, so $b=f$ and the result follows immediately from simplifying the formula in \cref{t:counting}. Moreover, when $r=2$, the coefficient of $x^2$ in the binomial expansion of $(1+x)^q$ is ${q \choose 2}$ as stated. The reduction is then evident.
\end{proof}

Notice that when $r=2$, the bound is known to be sharp; it agrees with
the result in \cite{L2}.  In \cref{p:last} we characterize the
values of $r$ and $q$ for which these bounds are sharp, i.e. can be
realized by some ideal.

\begin{example} As a first example, we examine cases where $r$ or $q$ is small.
We first consider $b$ and $f$ for small values of $q$.

\begin{itemize}
\item For $q=2$, computing $(1+x+ x^2 + \cdots + x^s)^2$ shows that
  $b=1$ when $r=2s$ is even and $b=2$ when $r=2s-1$ is odd.  If $r$ is
  even, we then have

  \begin{equation}\label{e:first}
    f = \frac{{{2+r-1}\choose{r}} -2 -1}{2} +1
      = \frac{r+1-2-1}{2} +1
      = \frac{r}{2}=s.
  \end{equation}
  
A similar computation shows that when $r$ is odd, $f=s$ as well.
\item If $q=3$ and $r=3$ or $r=4$ (so that $s=2$), then computing $(1+x+x^2)^3$ yields $b = 7$ and $f=7$ when $r=3$ and $b=6$ and $f=8$ when $r=4$.
\end{itemize}

Applying the equations from \cref{t:counting} yields:
\begin{itemize}
\item For $r=2$, any $q$, and any $t$,
$$\beta_t(I^r) \leq  q{{q-1}\choose{t}} + {{\frac{1}{2} q(q-1)}\choose{t+1}}.$$

\item For any $r$, $q=2$, and any $t \geq 2$, if $s = \lceil{ r/2\rceil}$,
$$\beta_t(I^r) \leq 2{{s}\choose{t+1}}. $$

\item For $r=q=3$ and any $t$, 
$$\beta_t(I^r) \leq 3 {{2}\choose{t}} + {{7}\choose{t+1}}.$$

\item For $r=4$, $q=3$ and any $t \geq 3$, 
$$\beta_t(I^r) \leq 3{{8}\choose{t+1}} -2{{6}\choose{t+1}}.$$
\end{itemize}

\end{example}

\begin{remark}
\label{r:f>q}
In \eqref{e:qf} it is useful to understand which of the integers
$q-1$, $f-1$ achieves the maximum.  For small
values of $q$ and $r$, it is possible to have $q \ge f$.
However, the
opposite holds in most cases. More precisely, we show below that if $r$,
$q$ satisfy any of the following assumptions:
\begin{enumerate}
\item $q=2$ and $r\ge 5$;
\item $q=3$ and $r\ge 3$;
\item $q\ge 4$ and $r\ge 2$, 
\end{enumerate}
then $f>q$, and hence $$\pd(I^r)\le f-1=\dim \Lrq\,. $$

In the case $q=2$, \eqref{e:first} shows $f=s=\lceil{\frac{r}{2}}\rceil$. If $r\ge 5$, we must
have $s >2$, thus $f=s>2=q$. This
settles~(1).

Now suppose $q>2$. If $r=2$, then \cref{c:smallr} shows that $f=b=
{q\choose 2}$. When $q\ge 4$, we have $f={q\choose 2}>q$.  Thus, to
show~(2) and~(3) it remains to consider the case when $q\ge 3$ and
$r\ge 3$. Observe that when $r\geq 3$, $s\geq 2$.  In this scenario,
since  $b$ is the coefficient of  $x^r$ in $(1+x+\dots +x^s)^q$ and $q>2$,  then   $b\geq 2$ and  
 $bq-b> q$. 

To see that  $f>q$ holds, we will show $f-q>0$, which by \eqref{e:f} amounts to
$$\frac{\binom{q+r-1}{r}-q- b}{q} +b -q 
= \frac{\binom{q+r-1}{r}-q-b+bq-q^2}{q}>0\,.$$
It is sufficient to show the numerator is positive, so we observe the following, where the first inequality results from $bq-b > q$ and the second follows since $r \geq 3$:
\begin{align*}
\binom{r+q-1}{q-1}-q-b+bq-q^2 
   &> \binom{r+q-1}{q-1}-q^2 \\
   &\geq \binom{q+2}{q-1} - q^2  \\
  &= \frac{(q+2)(q+1)q}{6}-q^2=\frac{q(q-1)(q-2)}{6}>0\,. & 
\end{align*}
This ends our argument.

\end{remark}

\begin{example}
In general, the bounds from \cref{t:counting} will be considerably
smaller than those provided by the Taylor complex, which is a simplex,
and the bound on the projective dimension will also decrease
significantly.  We display this phenomenon in the table below.

\bigskip

\renewcommand{\arraystretch}{1}
\begin{center}
\begin{tabular}{ |p{1.2cm}|p{1.7cm}|p{1.7cm}|p{1.7cm}|p{1.7cm}|p{1.7cm}|p{1.7cm}|}
 \hline
 \multicolumn{7}{|c|}{Bound Comparisons} \\
 \hline
 & \multicolumn{2}{|c|}{$q=2$, $r=3$} & \multicolumn{2}{|c|}{$q=3$, $r=3$} &
 \multicolumn{2}{|c|}{$q=3$, $r=4$} \\
 \hline
 & \cref{t:counting}&$3$-simplex&\cref{t:counting}&$9$-simplex&\cref{t:counting}&$14$-simplex\\
 \hline
 $\beta_0 (I^r)\leq $& 4&4&10&10&15&15\\
 $\beta_1 (I^r)\leq$ & 3&6&27&45&60&105\\  
 $\beta_2 (I^r)\leq$ & 0&4&38&120&131&455\\
 $\pd (I^r)\leq$     & 1&3&6&9&7&14\\
 \hline
\end{tabular}
\end{center}
\bigskip
\end{example}

The bounds on Betti numbers and projective dimension given by the complex $\Lrq$ in \cref{t:counting} are most effective when the generating set $\{ \bma \mid \ba \in \Nrq\}$ does not contain redundancies. When there are redundancies, using $\Lr(I)$ will yield improved bounds. We illustrate how to use this improvement by continuing \cref{e:square}.

  \begin{example} \label{e:square2}
Let $I=(xy,yz,zw,xw)=(m_1,m_2,m_3,m_4)$ as in \cref{e:square}.
Counting faces of size $i$ in the complex $\LL^2(I)$ provides a bound
on the $i^{th}$ Betti number of $I^2$.  Note that in general, these
improvements in the Betti numbers follow from knowledge of the
redundancies in $\{ \bma \mid \ba \in \Nrq\}$ and can often be
computed from that information using the equivalence classes without
explicitly constructing $\LL^r(I)$.  For instance, a comparison of the
bounds is summarized in the table below for $I^2$ and for $I^3$ using
the first of the two vertex sets for $\LL^3(I)$ given in
\cref{e:square}.

\renewcommand{\arraystretch}{1}
\begin{center}
\begin{tabular}{ |p{1.2cm}|p{0.7cm}|p{1.55cm}|p{1.55cm}|p{1.65cm}|p{0.7cm}|p{1.65cm}|p{1.55cm}|p{1.65cm}| }
 \hline
 \multicolumn{9}{|c|}{Bound Comparisons} \\
 \hline
 & \multicolumn{4}{|c|}{$r=2$} & \multicolumn{4}{|c|}{$r=3$}  \\
 \hline
 & $\LL^2(I)$ &$8$-simplex& $\quad \LL^2_4$  & $9$-simplex &  $\LL^3(I)$&$15$-simplex &$\quad \LL^3_4$ &$19$-simplex\\
&&&(Thm. 6.1)&&&&(Thm. 6.1)&\\
 \hline
$\beta_0 (I^r)\leq $& 9&9&10&10&16&16&20&20\\
 $\beta_1 (I^r)\leq$ & 20&36& 27 & 45 & 74&120 &132&190\\  
 $\beta_2 (I^r)\leq$ & 18&84&32&120&224&560&572&1,140\\
 $\pd (I^r)\leq$     & 4&8&5&9&11&15&15&19\\
 \hline
\end{tabular}
\end{center}

\end{example} 

\section{Extremal Ideals: When does $\Lrq$ support a minimal resolution?}
\label{s:min}
Based upon the above work, a natural question that arises is the
following: for given $r$ and $q$, can one find ideals $I$ for which
$I^r$ has a {\it minimal} free resolution supported on $\Lrq$ itself?
When $r=1$, $\Lrq$ is the Taylor complex, which one can easily see
always supports a minimal free resolution of the ideal generated by
$q$ variables $(x_1,x_2,\ldots,x_q)$. In the case where $r>1$,
\cref{p:last} describes exactly when the bounds for Betti numbers in
\cref{t:counting} and \cref{c:smallr} are sharp, in the sense that
there exist ideals for which equality is attained. We call the ideals
which realize these bounds $q$-{\it extremal ideals} and denote them by
$\E_q$. In fact we can show a much stronger statement in
\cref{t:extremal-betti}: the powers $\Erq$ have maximal Betti numbers
among the ideals $I^r$ where $I$ is generated by $q$ square-free
monomials.

\begin{definition}[{\bf Extremal ideals}]\label{d:extremal}
Let $q$ be a positive integer. For every set $A$ with $\emptyset \neq
A \subsetneq [q]$, introduce a variable $x_A$, and consider the
polynomial ring
$$S_{\E}=\sfk\big [ x_A \st \emptyset \neq A \subseteq [q]\big ]$$
over a field $\sfk$.  For each $i \in [q]$ define a square-free
monomial $\e_i$ in $S_{\E}$ as
  $$\e_i= \prod_{\substack{ A \subseteq [q]\\ i \in A}} x_A.$$ The square-free monomial ideal $\E_q =
  (\e_1,\ldots, \e_q)$ is called a {\bf $\pmb{q}$-extremal ideal}.
  \end{definition}

When it is unlikely to lead to confusion, we will simplify the
notation by writing $x_1$ for $x_{\{1\}}$, $x_{12}$ for $x_{\{1,2\}}$,
etc., and refer to a $q$-extremal ideal simply as an extremal ideal. The ring $S_{\E}$ has $2^{q} - 1$ variables, corresponding to the
power set of $[q]$ (excluding $\emptyset$), and each $\e_i$ is the
product of $2^{q-1}$ variables; that is, those corresponding to the
subsets of $[q]$ that contain $i$.  The following example illustrates how this works for $\E_4$.

\begin{example} When $q=4$, the ideal $\E_4$ is generated by the  monomials
  $$\begin{array}{ll} \e_1&=x_1 x_{12} x_{13} x_{14} x_{123} x_{124}
    x_{134}  x_{1234}  ; \\ \e_2&=x_{2} x_{12} x_{23}
    x_{24} x_{123} x_{124} x_{234} x_{1234}
    ;\\ \e_3&=x_{3} x_{13} x_{23} x_{34} x_{123} x_{134} x_{234}
    x_{1234} ;\\ \e_4&=x_{4} x_{14} x_{24} x_{34}
    x_{124} x_{134} x_{234} x_{1234} \\
  \end{array}$$
in the polynomial ring $\sfk[x_1, x_2, x_3, x_4, x_{12}, x_{13}, x_{14}, x_{23}, x_{24}, x_{34}, x_{123}, x_{124}, x_{134}, x_{234}, x_{1234}]$. 
\end{example}

Using the terminology of \cite{PV}, it naturally follows that $\E_q$
is precisely the {\it nearly Scarf ideal} of a $(q-1)$-simplex: we see
this by matching the variable $x_{[q]\smallsetminus B}$ with the face
$B$ of the simplex.

Following \cref{n:rqs}, let $r$ and $q$ be positive integers and $I$
an ideal generated by square-free monomials
$m_1,\ldots,m_q$. If $\ba=(a_1,\ldots,a_q)\in
\NN^q$, set
$$\bma={m_1}^{a_1}\cdots {m_q}^{a_q} \qand \pmea={\e_1}^{a_1}\cdots {\e_q}^{a_q}.$$
Observe
\begin{equation}
\label{e:pmea}
\pmea =\prod_{j\in [q]} \prod_{\substack{ A \subseteq [q]\\ j \in A}}  {x_A}^{a_j}= \prod_{\emptyset \neq A\subseteq [q]} (x_A)^{\sum_{j\in A}a_{j}}\,.
\end{equation}
The $r^{th}$ powers $I^r$ and $\Erq$ are generated by monomials of the
form $\bma$ and $\pmea$, respectively, with $\ba\in
\Nrq$. \cref{p:mingens} demonstrates that for $\Erq$, the elements
$\pmea$ with $\ba\in \Nrq$ form a minimal generating set. In fact, all
faces of $\Lrq$ are necessary in $\Lr(\E_q)$; i.e., none may be
removed when constructing $\Lr(\E_q)$.

\begin{proposition}\label{p:mingens}
  Let $r$ and $q$ be positive integers, and $\ba=(a_1,\ldots,a_q), \bb=(b_1,\ldots,b_q) \in \NN^q$. Then
  \begin{enumerate}
  \item[(i)] $\pmeb\mid \pmea$ $\iff$ $b_i \leq a_i$ for every $i\in [q]$.
  \item[(ii)] If $\ba, \bb\in \Nrq$,  then $\pmeb\mid \pmea$ $\iff$ $\bb=\ba$.
    \end{enumerate}
  In particular, if $\ba,\bb \in \Nrq$, then then $\Lrq=\Lr(\E_q)$.
\end{proposition}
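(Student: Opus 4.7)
The plan is to read everything off the product formula \eqref{e:pmea}, which rewrites $\pmea$ as $\prod_{\emptyset \neq A \subseteq [q]} x_A^{\sigma_A(\ba)}$ where $\sigma_A(\ba) := \sum_{j \in A} a_j$. Since the $x_A$ are distinct variables in the polynomial ring $S_{\E}$, divisibility between two such monomials is equivalent to coordinate-wise inequality of the exponent vectors. Thus $\pmeb \mid \pmea$ if and only if $\sigma_A(\bb) \le \sigma_A(\ba)$ for every nonempty $A \subseteq [q]$.

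For part $(i)$, one direction is immediate: if $b_i \le a_i$ for every $i$, then summing over any $A$ gives $\sigma_A(\bb) \le \sigma_A(\ba)$, so $\pmeb \mid \pmea$. Conversely, specializing $A = \{i\}$ in the equivalence above yields $b_i \le a_i$ for every $i \in [q]$. This is the only genuinely computational step and it is a one-line observation; I do not expect any obstacle here.

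For part $(ii)$, assume $\ba, \bb \in \Nrq$, so $\sum a_i = \sum b_i = r$. If $\pmeb \mid \pmea$, then by $(i)$ we have $b_i \le a_i$ for all $i$; but equality of the two sums then forces $b_i = a_i$ for each $i$, i.e., $\bb = \ba$. The converse is trivial.

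For the final claim that $\Lrq = \Lr(\E_q)$, I would simply trace through \cref{d:LrI} applied to $I = \E_q$. By $(ii)$, the relation $\ba \sim \bb \iff \pmea = \pmeb$ on $\Nrq$ is the identity, so every equivalence class $V_i$ in Step~1 is a singleton and the choice of representative is forced. In Step~2, again by $(ii)$, no strict divisibility $\pme^{\bc_j} \mid \pme^{\bc_i}$ holds for distinct $\bc_i, \bc_j \in \Nrq$, so no vertex is removed. Hence $V = \Nrq$ and the induced subcomplex of $\Lrq$ on $V$ is $\Lrq$ itself, giving $\Lr(\E_q) = \Lrq$. The whole argument is essentially bookkeeping once the product formula \eqref{e:pmea} is in hand, so I do not anticipate a real obstacle.
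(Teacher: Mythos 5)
Your proposal is correct and follows essentially the same route as the paper: both arguments read divisibility off the product formula \eqref{e:pmea} as the coordinate-wise inequalities $\sum_{j\in A}b_j \le \sum_{j\in A}a_j$ for all nonempty $A$, specialize to singletons for $(i)$, use the equal-sum condition for $(ii)$, and derive the final claim directly from \cref{d:LrI}. No issues.
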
 
   
\begin{proof}
   To prove $(i)$, use \eqref{e:pmea} to justify the first equivalence below.
 $$\pmeb\mid \pmea \iff
  \sum_{j\in A}b_{j} \leq \sum_{j\in A}a_{j} \qforeach \emptyset \neq A \subseteq [q] \iff
  b_{j} \leq a_{j} \qforeach j \in [q].$$

  Now $(ii)$ follows from $(i)$ and the added assumption 
  $$b_1 + \cdots +b_q = a_1 + \cdots +a_q = r.$$

  The final claim follows directly from  \cref{d:LrI}. 
\end{proof}
   
In general, given the discussions above (see
also~\cite{M}), for every $r$ and $q$ we have
\begin{equation}\label{e:inclusions}
\LL^r(\E_q) = \Lrq \subseteq \Taylor(\Erq),
\end{equation}
where we assume that every vertex $\ba$ of $\LL^r_q$
  is labeled with a generator $\pmea$ of ${\E_q}^r$, and each face $\sigma$ is
  labeled with the lcm of the labels of its vertices, denoted $M_{\sigma}$.
  The following observation will be useful when working with the monomial label $M_{\sigma}$ of $\sigma\in \Taylor(\Erq)$. 
  \begin{remark}
  Let $\sigma=\{\pme^{\ba_1}, \dots, \pme^{\ba_t}\}\in \Taylor(\Erq)$ and set ${\ba_i}=(a_{i1}, a_{i2}, \dots, a_{iq})$ for $i\in [t]$. Using 
  \eqref{e:pmea}, we then have  
  \begin{equation}
  \label{e:Msigma}
  M_\sigma=\lcm\Big ( \prod_{\emptyset \neq A\subseteq [q]}(x_A)^{\sum_{j\in A}a_{ij}}: i\in [t]\Big )=\prod_{ \emptyset \neq A \subseteq [q]} (x_A)^{\underset{1\le i\le t}{\max}  \sum_{j\in A} a_{ij}}\,.
  \end{equation}
  Furthermore,  if $\bc=(c_1, \dots, c_q)\in \Nrq$, \eqref{e:pmea} and \eqref{e:Msigma} give
  \begin{equation}
  \label{e:divide-lcm}
\pme^{\bc}\mid M_\sigma\iff \sum_{j\in A}c_j\le \underset{1\le i\le t}{\max}  \sum_{j\in A} a_{ij}\qquad\text{for all}\quad \emptyset\ne A\subseteq [q].
  \end{equation}
\end{remark}  

We show in \cref{t:extremal-betti} that powers of extremal ideals
attain maximal Betti numbers among powers of all square-free monomial
ideals with the same number of generators. To prove this, we first
define a ring homomorphism $S_{\E}\to S$ and discuss its properties.
  
\begin{definition}[{\bf The ring homomorphism $\psi_I$}]\label{d:psi}
  Let $I$ be an ideal of the
  polynomial ring $S=\sfk[x_1,\ldots,x_n]$ minimally generated by
  square-free monomials $m_1,\ldots,m_q$. For each $k \in [n]$, set
  $$A_k= \{j \in [q] \st x_k \mid m_j\}\,.$$
  We have thus
  $$
j\in A_k\iff x_k\mid m_j\,.
  $$
  Define $\psi_I$ to be the
  ring homomorphism $$\psi_I \colon S_\E \to S \qwhere
  \psi_I(x_A)=\begin{cases}
  \displaystyle \prod_{\substack{k\in[n] \\ A=A_k}} x_k & \mbox{if } A=A_k \mbox{ for some
  } k \in [n],\\ 1 & \mbox{otherwise}.
  \end{cases}
$$
\end{definition}

Before proceeding directly with our work on extremal ideals, we illustrate how the map $\psi_I$ works in a sample case where there are four generators and seven variables. 

 \begin{example}\label{e:psi} 
   Let $I$ be the ideal of $\sfk[x_1, \ldots , x_7]$ generated by the
   square-free monomials
   $$m_1 = x_1x_2x_5x_7, \quad m_2 = x_2x_3x_7, \quad m_3 = x_3x_4x_6, \quad
   m_4 = x_1x_4.$$ Since $n = 7$ and $q = 4$, it follows that
   $$A_1=\{1,4 \}, \quad A_2 =\{1,2 \}, \quad A_3 =\{2,3 \}, \quad A_4 =\{3,4 \}, \quad A_5 =\{1 \}, \quad A_6 =\{3 \}, \quad A_7 =\{1,2 \}.$$

The function
$$\psi_I: \sfk\big{[}x_A\colon \emptyset\ne A\subseteq [q]\big{]} \to \sfk[x_1,
  \ldots, x_7]$$ maps 
$$\begin{array}{llll}
  \psi_I(x_{\{1,4\}}) = x_1,
& \psi_I(x_{\{1,2\}}) = x_2x_7, 
& \psi_I(x_{\{2,3\}}) = x_3,
& \psi_I(x_{\{3,4\}}) = x_4,\\  
&&&\\
  \psi_I(x_{\{ 1\}}) = x_5,
& \psi_I(x_{\{ 3\}}) = x_6,  & \mbox{and}  
& \psi_I(x_A) = 1 \,\, \mbox{otherwise}.     
\end{array}   
$$
Observe that 
$$
\psi_I(\e_1)=\psi_I \Big (\prod_{\substack{A\subseteq [q] \\ 1\in A}}x_A \Big)=\prod_{\substack{A\subseteq [q] \\ 1\in A}}\psi_I(x_A)=\psi_I(x_{\{1,4\}})\psi_I(x_{\{1,2\}})\psi_I(x_{\{1\}})=x_1(x_2x_7)x_5=m_1\,.
$$
\end{example}

Generalizing the properties of $\psi_I$ from the example, we arrive at the following lemma. 
 \begin{lemma}  Let $I$ be an ideal of the
   polynomial ring $S=\sfk[x_1,\ldots,x_n]$, minimally generated by
   square-free monomials $m_1,\ldots,m_q$. Then
   \begin{enumerate}
   \item[(i)] $\psi_I(\pmea)=\bma$ for each $\ba\in \Nrq$;
   \item[(ii)] $\psi_I(\Erq)S  =I^r$ for every $r>0$;
   \item[(iii)] $\psi_I$ preserves least
     common multiples, that is: 
$$
   \psi_I(\lcm(\pme^{\ba_1}, \dots, \pme^{\ba_t}))=\lcm(\bm^{\ba_1}, \dots, \bm^{\ba_t}) \qforall \ba_1,\ldots,\ba_t \in \Nrq, \quad t\ge 1.
$$     
   \end{enumerate}
 \end{lemma}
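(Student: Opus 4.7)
My plan is to reduce everything to a tracking of exponents. The key identity I will establish first is a factorization of $\bma$ analogous to \eqref{e:pmea}: since each $m_j$ is square-free, $m_j = \prod_{k\,:\, x_k\mid m_j} x_k = \prod_{k\,:\, j\in A_k} x_k$, so
$$\bma = \prod_{j\in[q]} m_j^{a_j} = \prod_{j\in[q]}\prod_{k\,:\, j\in A_k} x_k^{a_j} = \prod_{k\in[n]} x_k^{\sum_{j\in A_k} a_j}.$$
Together with the formula $\pmea = \prod_{\emptyset\ne A\subseteq [q]} x_A^{\sum_{j\in A}a_j}$ from \eqref{e:pmea}, this places $\bma$ and $\pmea$ on exactly parallel footing, with the bijection $k\leftrightarrow A_k$ matching the variable sets.

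For part~(i), I would simply apply $\psi_I$ to \eqref{e:pmea}. Using that $\psi_I$ is a ring homomorphism, the factors corresponding to subsets $A$ not of the form $A_k$ are killed (they map to $1$), and the remaining factors can be reindexed by $k\in[n]$ via $A=A_k$. The result is exactly the factorization of $\bma$ established above. Part~(ii) is then an immediate consequence: since $\Erq = (\pmea : \ba \in \Nrq)$ in $S_\E$ (and minimally so by \cref{p:mingens}) and $I^r = (\bma : \ba \in \Nrq)$ in $S$, applying $\psi_I$ to a generating set of $\Erq$ and extending the image to $S$ gives a generating set of $I^r$.

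For part~(iii), I would invoke the lcm formula \eqref{e:Msigma}:
$$\lcm(\pme^{\ba_1},\dots,\pme^{\ba_t}) = \prod_{\emptyset \ne A\subseteq [q]} x_A^{\max_{1\le i\le t}\sum_{j\in A} a_{ij}}.$$
Applying $\psi_I$ and performing the same collapse-and-reindex as in part~(i) yields $\prod_{k\in[n]} x_k^{\max_i \sum_{j\in A_k} a_{ij}}$. On the other hand, taking the lcm of the $\bm^{\ba_i}$ using the Step~1 factorization produces the same expression, since the lcm of monomials is computed variable-by-variable as the maximum of exponents. The two sides therefore agree.

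I do not expect a genuine obstacle here; the only point requiring care is the bookkeeping in the reindexing $A\leftrightarrow A_k$. This is where the definition of $\psi_I$ is tailored precisely to make the exponent of $x_k$ in $\psi_I(\pmea)$ equal to $\sum_{j\in A_k} a_j$, which is the exponent of $x_k$ in $\bma$. Because $\psi_I$ sends distinct relevant $x_A$'s to products of disjoint subsets of the $x_k$'s (the fibers of $k\mapsto A_k$), no exponents interfere, and both the product operation in part~(i) and the $\max$ operation in part~(iii) commute with $\psi_I$.
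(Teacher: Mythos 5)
Your proposal is correct and follows essentially the same route as the paper: the paper first computes $\psi_I(\e_j)=m_j$ by the same reindexing $A\leftrightarrow A_k$ and then derives $(i)$ and $(ii)$, and its proof of $(iii)$ is exactly your application of $\psi_I$ to \eqref{e:Msigma} followed by the variable-by-variable computation of the lcm over $k\in[n]$. The only cosmetic difference is that you factor $\bma=\prod_{k\in[n]}x_k^{\sum_{j\in A_k}a_j}$ up front rather than deriving it along the way, which changes nothing of substance.
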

 
 \begin{proof}
   By
   \cref{d:extremal,d:psi}, for every $j\in[q]$
 $$
   \psi_I(\e_j)
   = \psi_I\Big (\prod_{\substack{A\subseteq [q] \\ j\in A}} x_A \Big)
   =\prod_{\substack{A\subseteq [q] \\ j\in A}} \psi_I(x_A)
   =\prod _{\substack{A\subseteq [q] \\ j\in A}} \prod_{\substack{k\in[n]\\A=A_k}} x_k
   =\prod_{\substack{k\in[n]\\j \in A_k}}x_k
   =\prod_{\substack{k\in[n]\\x_k \mid m_j}}x_k=m_j. 
 $$

It follows that $\psi_I(\pmea)=\bma$ for all $\ba\in \Nrq$, which
   establishes $(i)$ and also $(ii)$.  It remains to show $(iii)$. 
 
Set ${\ba_i}=(a_{i1}, a_{i2}, \dots, a_{iq})$, for $i\in [t]$. Using \eqref{e:Msigma} in the first equality below, we have: 

 \begin{align*}
\psi_I(\lcm(\pme^{\ba_1}, \dots, \pme^{\ba_t}))&=\psi_I\Big{(}\prod_{ \emptyset \neq A \subseteq [q]} (x_A)^{\underset{1\le i\le t}{\max}  \sum_{j\in A} a_{ij}}\Big{)}    \\
 &=\prod_{\emptyset \neq A \subseteq [q]} \Big (\prod_{\substack{k\in[n] \\ A=A_k}} x_k \Big )^{\underset{1\le i\le t}{\max} \sum_{j\in A} a_{ij}}\\
 &=\prod_{k\in [n], A_k\ne \emptyset}(x_k)^{\underset{1\le i\le t}{\max}\sum_{j\in A_k} a_{ij}}\\
 &=\lcm \Big (\prod_{k\in [n], A_k\ne \emptyset} (x_k)^{\sum_{j\in A_k}a_{1j}},\dots,  \prod_{k\in [n], A_k\ne \emptyset} (x_k)^{\sum_{j\in A_k}a_{tj}}\Big )\\
 &=\lcm \Big (\prod_{j\in [q]}\prod_{\substack{k\in[n]\\j \in A_k}} (x_k)^{a_{1j}}, \dots, \prod_{j\in [q]}\prod_{\substack{k\in[n]\\j \in A_k}}(x_k)^{a_{tj}} \Big)\\
 &=\lcm \Big(\prod_{j\in [q]}m_j^{a_{1j}}, \dots, \prod_{j\in [q]} m_j^{a_{tj}}\Big)\\
 &=\lcm(\bm^{\ba_1},\dots, \bm^{\ba_t}).
\end{align*} 
\end{proof}

\cref{t:extremal-betti} below demonstrates why the ideals from
\cref{d:psi} are called \textit{extremal}: they have the
greatest Betti numbers among all ideals minimally generated by $q$
square-free monomials.  The following lemma provides the technical
preliminaries necessary for the proof of \cref{t:extremal-betti}.

\begin{lemma} 
\label{l:Tors} Let $I$ be an ideal minimally generated by $q$ square-free monomials in a polynomial ring $\sfk[x_1, \ldots, x_n]$. With notation as in \cref{d:psi}, 
if $S$ is viewed as an $S_{\E}$-module via the ring homomorphism $\psi_I \colon S_{\E}\to S$, then 
$$S_{\E}/\Erq\otimes_{S_{\E}}S\cong S/I^r\quad\text{and}\quad \Tor_i^{S_{\E}}(S_{\E}/\Erq, S)=0\quad\text{for all $i>0$}.
$$ 
\end{lemma}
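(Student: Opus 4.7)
The first isomorphism is a general fact: for any ring homomorphism $\varphi\colon R\to T$ and any ideal $J\subseteq R$, one has $R/J\otimes_R T \cong T/\varphi(J)T$. Applying this with $\varphi=\psi_I$ and $J=\Erq$, and invoking part~(ii) of the preceding lemma (which gives $\psi_I(\Erq)S=I^r$), immediately yields $S_\E/\Erq\otimes_{S_\E} S \cong S/I^r$.

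For the Tor-vanishing, my plan is to exploit the fact that the same simplicial complex $\Lrq=\Lr(\E_q)$ supports free resolutions of both $\Erq$ over $S_\E$ and $I^r$ over $S$, and that $\psi_I$ transports one to the other. Let $\mathbb F_\bullet$ denote the $S_\E$-free resolution of $\Erq$ supported on $\Lrq$ supplied by \cref{t:main}, with basis $\{e_\sigma : \sigma\in \Lrq\}$ and differentials given by formula~\eqref{e:Taylor-diff} using the $S_\E$-labels $M_\sigma^\E = \lcm\{\pme^{\ba} : \ba\in\sigma\}$. Tensoring with $S$ over $S_\E$ via $\psi_I$ yields a complex $\mathbb F_\bullet\otimes_{S_\E} S$ with the same bases $e_\sigma\otimes 1$, whose differentials are obtained by applying $\psi_I$ to the coefficients. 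By parts~(i) and~(iii) of the preceding lemma, $\psi_I(M_\sigma^\E) = \lcm\{\bm^\ba : \ba\in\sigma\}=M_\sigma^I$ is exactly the $S$-label coming from the generators of $I^r$, and hence $\psi_I(M_\sigma^\E / M_\tau^\E) = M_\sigma^I / M_\tau^I$ for every subface $\tau\subset\sigma$. Therefore $\mathbb F_\bullet\otimes_{S_\E} S$ coincides with the homogenization of the simplicial chain complex of $\Lrq$ with $I^r$-labels, which by \cref{t:main}(1) is a free resolution of $I^r$ over $S$.

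Consequently, applying $-\otimes_{S_\E} S$ to the augmented free resolution $\mathbb F_\bullet \to S_\E \to S_\E/\Erq\to 0$ of $S_\E/\Erq$ produces a complex of the form $\mathbb F_\bullet \otimes S \to S \to S/I^r\to 0$ which is exact in positive degrees by the previous paragraph. This gives $\Tor_i^{S_\E}(S_\E/\Erq,S)=0$ for all $i\ge 1$.

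The one non-formal step is the identification of $\mathbb F_\bullet\otimes_{S_\E} S$ with the $S$-complex supported on $\Lrq$ with $I^r$-labels; this is precisely what parts~(i) and~(iii) of the preceding lemma are designed to furnish, so in fact the argument reduces to an application of \cref{t:main} once $\psi_I$ is shown to preserve least common multiples.
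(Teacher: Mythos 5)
Your argument is correct, and the first isomorphism is handled exactly as in the paper. For the Tor-vanishing, however, you take a slightly different route: you base-change the resolution of $\Erq$ supported on $\Lrq$ (supplied by \cref{t:main}) and identify the result with the resolution of $I^r$ supported on $\Lrq$ with the $I^r$-labels, again via \cref{t:main}(1). The paper instead uses the full Taylor simplex $\Taylor(\Erq)$ on the generating set $\{\pme^{\ba}: \ba\in\Nrq\}$; after tensoring, the resulting complex is the Taylor resolution of $S/I^r$ built on the possibly non-minimal generating set $\{\bm^{\ba}:\ba\in\Nrq\}$, which is still exact by \cref{r:larger-Taylor}. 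Both arguments hinge on the same two facts — that $\psi_I$ sends $\pme^{\ba}$ to $\bm^{\ba}$ and preserves least common multiples, so that the quotients $M_\sigma/M_\tau$ appearing in the differential \eqref{e:Taylor-diff} transport correctly — so the mechanism is identical; the difference is only in which labeled complex one chooses. The paper's choice is more economical in its dependencies, needing only the classical Taylor construction, whereas yours leans on the heavier \cref{t:main}; since that theorem is established well before this lemma, there is no circularity, and your version has the mild aesthetic advantage of using a single complex ($\Lrq$) consistently throughout the section.
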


\begin{proof}
 First, note that 
$$
 \frac{S_{\E}}{\Erq}\otimes_{S_{\E}} S\cong \frac{S}{(\Erq)S}=\frac{S}{\psi_I(\Erq)S}=\frac{S}{I^r}\,.
 $$ To compute $\Tor_i^{S_{\E}}(S_{\E}/\Erq, S)$, use the Taylor
 complex $\Taylor(\Erq)$, which supports a free resolution $\mathbb F$
 of $S_{\E}/\Erq$; see \eqref{e:Taylor-diff} for a description of the
 differentials of $\mathbb F$.  Since the homomorphism $\psi_I$
 changes the labels $\pmea$ to the labels $\bma$ and preserves least
 common multiples, the chain complex $\mathbb F\otimes_{S_{\E}}S$ is
 isomorphic to a homogenization of the chain complex associated to the
 simplex with vertices corresponding to $\ba\in \Nrq$ and labeled with
 the monomials $\bma$. This is a version of the Taylor resolution of
 $S/I^r$ defined starting with a possibly non-minimal generating set
 of $I^r$. Such a non-minimal version is a free resolution of $S/I^r$
 as well, hence $\Tor_i^{S_{\E}}(S_{\E}/\Erq, S)=0$ when $i>0$.  (See
 \cref{r:larger-Taylor}.)
\end{proof}

  \begin{theorem}[{\bf Powers of extremal ideals have maximal Betti numbers}]
\label{t:extremal-betti} Given positive integers $r$ and $q$, 
$$\beta_i^S(I^r)\le \beta_i^{S_{\E}}(\Erq)$$ for any $i\ge 0$ and any
ideal $I$ minimally generated by $q$ square-free monomials in a polynomial ring
$S$.\end{theorem}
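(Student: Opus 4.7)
The plan is to deduce the inequality directly from the preceding lemma, which supplies the only nontrivial ingredient, namely the vanishing of $\Tor_i^{S_{\E}}(S_{\E}/\Erq, S)$ for $i>0$ and the isomorphism $S_{\E}/\Erq\otimes_{S_\E} S \cong S/I^r$, where $S$ is regarded as an $S_{\E}$-module via $\psi_I$. These two facts together say that a free $S_{\E}$-resolution of $S_{\E}/\Erq$ becomes a free $S$-resolution of $S/I^r$ after applying $-\otimes_{S_\E} S$.

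Concretely, I would begin by choosing a minimal free $S_{\E}$-resolution $\mathbb{F}_\bullet$ of $S_{\E}/\Erq$, so that $\rank_{S_{\E}} \mathbb{F}_i = \beta_i^{S_{\E}}(S_{\E}/\Erq)$. Forming the complex $\mathbb{F}_\bullet \otimes_{S_{\E}} S$ of free $S$-modules, the lemma gives $H_0 = S/I^r$ and $H_i = 0$ for $i>0$. Therefore $\mathbb{F}_\bullet \otimes_{S_{\E}} S$ is a (possibly non-minimal) free resolution of $S/I^r$ over $S$ in which the rank of the term in homological degree $i$ equals $\beta_i^{S_{\E}}(S_{\E}/\Erq)$. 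Because $\beta_i^S(S/I^r)$ is by definition the minimum rank of the degree-$i$ term over all free resolutions of $S/I^r$, we obtain
$$\beta_i^S(S/I^r) \;\le\; \beta_i^{S_{\E}}(S_{\E}/\Erq)\qquad \text{for all } i\ge 0.$$

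To convert this into the stated bound on the Betti numbers of the ideals themselves, I would invoke the standard shift coming from the short exact sequences $0\to I^r\to S\to S/I^r\to 0$ and $0\to \Erq\to S_{\E}\to S_{\E}/\Erq\to 0$, which yield $\beta_i^S(I^r)=\beta_{i+1}^S(S/I^r)$ and $\beta_i^{S_{\E}}(\Erq)=\beta_{i+1}^{S_{\E}}(S_{\E}/\Erq)$ for every $i\ge 0$. Substituting gives $\beta_i^S(I^r)\le \beta_i^{S_{\E}}(\Erq)$, as desired. There is no real obstacle here: the content of the argument is packaged entirely in the Tor-vanishing lemma, and the only thing that has to be checked is that tensoring a minimal resolution over $S_{\E}$ with $S$ yields a resolution (not necessarily minimal) of $S/I^r$, which is an immediate consequence of that vanishing.
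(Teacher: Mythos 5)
Your proposal is correct and follows essentially the same route as the paper: tensor a minimal free $S_{\E}$-resolution of $S_{\E}/\Erq$ with $S$ along $\psi_I$, use the Tor-vanishing lemma to see the result is a free resolution of $S/I^r$, and compare ranks (the paper absorbs your explicit degree shift into the identity $\beta_i(\Erq)=\operatorname{rank}\mathbb{F}_{i+1}$). No gaps.
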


\begin{proof}
 Let $\mathbb F$ be a minimal free resolution of $S_{\E}/\Erq$ over $S_{\E}$. Then  \cref{l:Tors} shows that $\mathbb F\otimes_{S_{\E}}S$ is a free resolution of $S/I^r$ over $S$. Consequently,
$$
\beta_i^{S_{\E}}(\Erq)=\text{rank}_{S_{\E}}(\mathbb F_{i+1})=\text{rank}_{S}(\mathbb F_{i+1}\otimes_{S_{\E}}S)\ge \beta_i^S(I^r).
$$
\end{proof} 

 In view of \cref{t:extremal-betti}, the homogenized chain complex of any (cell) complex that supports a minimal free resolution of $\Erq$ can be thought of as an upper bound for the minimal free resolution of the $r^{th}$ power of any ideal minimally generated by $q$ square-free monomials. \cref{p:minimal} establishes when our simplicial complex $\Lrq$ supports a minimal free resolution of $\Erq$. 

\begin{proposition}[{\bf When $\Lrq$ supports a  minimal free resolution of $\Erq$}]\label{p:minimal}
Let $r$ and $q$ be positive integers. The following statements are equivalent: 
\begin{enumerate}
\item The simplicial complex $\Lrq$
supports a minimal free resolution of the ideal $\Erq$.
\item $\pd_{S_\E}(\Erq)=\dim \Lrq$. 
\item One of the following conditions holds:
\begin{itemize}
\item  $q=1$ and $r\ge 1$;
\item  $q=2$ and $1\le r\le 4$;
\item  $q\ge 3$ and $1\le r\le 2$.
\end{itemize}
\end{enumerate}
\end{proposition}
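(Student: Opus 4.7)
The plan is to establish the chain of implications $(1) \Rightarrow (2) \Rightarrow (3) \Rightarrow (1)$. The implication $(1) \Rightarrow (2)$ is immediate from the minimality part of \cref{t:BPS}: if $\Lrq$ supports a minimal free resolution of $\Erq$, then $\beta_i^{S_\E}(\Erq) = f_i(\Lrq)$ for every $i$, whence $\pd_{S_\E}(\Erq) = \dim \Lrq$.

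For $(3) \Rightarrow (1)$, I would use the BPS minimality criterion in \cref{t:BPS}: the resolution $\mathbb F_{\Lrq}$ is minimal if and only if no face $\sigma$ of $\Lrq$ admits a proper subface $\sigma' \subsetneq \sigma$ with $M_{\sigma'} = M_\sigma$. By \eqref{e:divide-lcm}, this is equivalent to showing that for each face $\sigma$ and each vertex $\ba \in \sigma$ there exists $\emptyset \ne A \subseteq [q]$ with $\sum_{j \in A} a_j > \max_{\bb \in \sigma \setminus \{\ba\}} \sum_{j \in A} b_j$. The cases $q=1$ and $r=1$ are immediate (the latter because $\Lrq$ is then the Taylor complex on $\E_q$, whose face labels $\prod_{A \cap \sigma \ne \emptyset} x_A$ are pairwise distinct). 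For $q = 2$ and $2 \le r \le 4$, the complex $\Lrq$ is a simple path along which the second coordinate strictly increases, so all face labels are distinct. For $q \ge 3$ and $r = 2$, I would handle $B^2$ and each $G^2_i$ separately: for $\ba = \be_i + \be_j \in B^2$ take $A = \{i, j\}$ (noting that $\ba$ is the unique vertex of $B^2$ with $\sum_A a_k = 2$); for $\ba = 2\be_i \in G^2_i$ take $A = \{i\}$; and for $\ba = \be_i + \be_j \in G^2_i$ take $A = \{j\}$.

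For $(2) \Rightarrow (3)$, I would argue the contrapositive: if $(r,q)$ is not as in $(3)$, then $\pd_{S_\E}(\Erq) < \dim \Lrq$. When $q = 2$ and $r \ge 5$, note that $\E_2 = x_{12}(x_1, x_2)$ in $S_\E = \sfk[x_1, x_2, x_{12}]$, so $\Erq = x_{12}^r(x_1, x_2)^r \cong (x_1, x_2)^r$ as $S_\E$-modules; since $\sfk[x_1, x_2] \hookrightarrow S_\E$ is flat, Auslander--Buchsbaum yields $\pd_{S_\E}(\Erq) = 1$, whereas $\dim \Lrq \ge |F^r_1| - 1 = s - 1 \ge 2$. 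When $q \ge 3$ and $r \ge 3$, I would exhibit a dominated vertex in every top-dimensional facet of $\Lrq$: the vertex $\ba^* := (r-2)\be_1 + \be_2 + \be_3$ lies in $F^r_1$ (and equals the unique vertex of $B^3$ of support $\{1,2,3\}$ when $r = 3$, in which case $F^r_1 = B^3$ by \cref{p:Lrq}). A direct check via \eqref{e:divide-lcm}, using the witnesses $(r-1)\be_1 + \be_2$ and $(r-1)\be_1 + \be_3$ for singleton intersections with $\{1,2,3\}$ and $s \be_2 + (r-s)\be_3$ for $A \supseteq \{2,3\}$, shows $\pme^{\ba^*} \mid M_{F^r_1 \setminus \{\ba^*\}}$. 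Applying the analogous construction at each index $i \in [q]$, every top-dimensional facet of $\Lrq$ carries its own dominated vertex $\ba^*_i$, and algebraic Morse cancellation of the pairs $(F^r_i, F^r_i \setminus \{\ba^*_i\})$ eliminates all generators in homological degree $d = \dim \Lrq$, yielding $\beta_d^{S_\E}(\Erq) = 0$ and hence $\pd_{S_\E}(\Erq) < \dim \Lrq$.

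The main obstacle will be the final step: ensuring that the $q$ cancellation pairs can be matched simultaneously in an acyclic way. I expect the acyclicity to follow from the structural observation that $|F^r_i \setminus \{\ba^*_i\}| > |B^r| = |F^r_i \cap F^r_j|$ for all $j \ne i$, so that no codimension-one subface $F^r_i \setminus \{\ba^*_i\}$ is contained in any other top facet, which rules out upward paths in the modified Hasse diagram that could close into a cycle. When $r = 3$ this obstacle disappears entirely, since $B^3$ is the unique top facet and a single cancellation suffices.
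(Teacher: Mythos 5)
Your overall architecture coincides with the paper's: $(1)\Rightarrow(2)$ is immediate, $(3)\Rightarrow(1)$ is a BPS label-distinctness check, and $(2)\Rightarrow(3)$ is proved contrapositively by locating a vertex of a top-dimensional facet $F_i^r$ whose label divides the lcm of the remaining labels. Your $(3)\Rightarrow(1)$ verification is essentially the paper's (the paper proves the slightly stronger statement $\pme^{\bc}\mid M_\sigma\iff\bc\in\sigma$ for $q\ge 3$, $r=2$; your facet-by-facet choice of witness sets $A$ amounts to the same computation). Where you genuinely diverge is in how the existence of a dominated vertex is converted into $\pd_{S_{\E}}(\Erq)<\dim\Lrq$. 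The paper argues directly on the differential: if $\pd=f-1$, some element $e=\sum\alpha_F e_F$ with a unit coefficient $\alpha_{F_1^r}$ must satisfy $\partial(e)\in(S_{\E})_{\ge 1}\mathbb F_{f-2}$, and the fact that $F_1^r\smallsetminus\{\bc\}$ is a codimension-one face of no other top facet (the paper's \eqref{e=}) forces the unit $\pm\alpha_{F_1^r}$ to survive as a coefficient, a contradiction. You instead invoke algebraic discrete Morse theory with the matching $\{(F_i^r\smallsetminus\{\ba_i^*\},F_i^r)\}_{i\in[q]}$; this is legitimate (the matched coefficients are $\pm 1$ and the pairs are label-homogeneous), but it imports machinery the paper does not develop here, and the acyclicity you flag as the "main obstacle" is exactly the content of the paper's \eqref{e=}. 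Your proposed justification does close the gap: a $V$-path step would force $F_{j'}^r\smallsetminus\{\ba_{j'}^*\}\subseteq F_j^r\cap F_{j'}^r=B^r$, i.e.\ $f-1\le b$, and for $q\ge 3$, $r\ge 4$ one has $f-b=|F_i^r\smallsetminus B^r|\ge q-1\ge 2$ since $(r-1)\be_i+\be_j\in F_i^r\smallsetminus B^r$ for all $j\ne i$ (using $s\le r-2$); the single-pair case $r=3$ is trivially acyclic as you note. Your dominated vertex $\ba^*=(r-2)\be_1+\be_2+\be_3$ with witnesses $(r-1)\be_1+\be_2$, $(r-1)\be_1+\be_3$, $s\be_2+(r-s)\be_3$ checks out against \eqref{e:divide-lcm} and is a uniform alternative to the paper's two separate constructions ($\e_1^{r-s}\e_2^{s}$ versus $\e_1^{r-2}\e_2\e_3$). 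Finally, your treatment of $q=2$, $r\ge 5$ via $\E_2^r\cong(x_1,x_2)^r$ and $\pd_{S_{\E}}(\E_2^r)=1<s-1=\dim\LL^r_2$ is a clean shortcut that the paper does not use, as its uniform argument absorbs this case. In short: correct in substance, with the one step you yourself identified as delicate being fixable exactly along the lines you indicate, and structurally parallel to the paper's \eqref{e=}.
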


\begin{proof}

(1)$\implies$(2): This implication is clear.

(2)$\implies$(3): Assume that $\pd_{S_\E}(\Erq)=\dim \Lrq$ and the values of $r$ and $q$ do not satisfy the conditions in (3). In particular, we must have $q\ge 2$ and $r\ge 5$, or $q\ge 3$ and  $r=3,4$. By \cref{r:f>q} and \cref{e:f}, for all $i,j \in [q]$,  
\begin{equation}\label{e:star}
|F_i^r| = f > q =|G_j^r|
\end{equation}
and so $F_1^r, \ldots, F_q^r$ are the facets of $\Lrq$ of highest dimension, with the caveat that $F_1^r=\cdots =F_q^r$ when $r=3=q$, and thus $\dim \Lrq=f-1$. Since  we assumed $\pd_{S_\E}(\Erq)=\dim \Lrq$, we have $\pd_{S_\E}(\Erq)=f-1$ as well.  Let $\mathbb F$ denote the free resolution supported on $\Lrq$, and let $\partial$ denote its differential, which is described in \eqref{e:Taylor-diff}.  Since $\pd_{S_\E}(\Erq)=f-1$ and a minimal free resolution of $\Erq$ is isomorphic to a direct summand of $\mathbb F$, there must exist a basis element $e\in \mathbb F_{f-1}\smallsetminus (S_\E)_{\geqslant 1}\mathbb F_{f-1}$ such that 
\begin{equation}
\label{e:min}
\partial(e)\in (S_\E)_{\geqslant 1}\mathbb F_{f-2}\,.
\end{equation}
As in \eqref{e:Taylor-diff}, let $e_\sigma$ denote the basis element in $\mathbb F$ corresponding to $\sigma\in \Lrq$.  We write
$$
e=\sum_{F\in \Lrq, |F|=f} \alpha_Fe_{F}
$$
with $\alpha_F\in S_\E$. The assumption  $e\notin(S_\E)_{\geqslant 1}\mathbb F_{f-1}$ implies that $\alpha_F$ is a unit for some $F\in \Lrq$ with $|F|=f$. By \cref{e:star}, we see that  $F=F_i^r$ for some $i\in [q]$. Without loss of generality, assume $i=1$. 

Recall that $M_\sigma$ denotes the $\lcm$ of the monomial labels of the vertices in $\sigma\in \Lrq$.
\begin{claim}
 There exists $\bc\in F_1^r\smallsetminus G_1^r$ such that $M_{F_1^r}=M_{F_1^r\smallsetminus \{\bc\}}$.
\end{claim}

\noindent {\it Proof of Claim.} Assume first  $r>4$, and $q
\geq 2$, so that $s=\left\lceil{\frac{r}{2}}\right\rceil \geq 3$. Let
$\ba,\bb, \bc \in \Nrq$ be such that
$$\pmea={\e_1}^{r-s}{\e_2}^{s}, \quad
  \pmeb={\e_1}^{r-s+2}{\e_2}^{s-2} \qand
  \pme^{\bc}={\e_1}^{r-s+1}{\e_2}^{s-1},
$$ 
so that $\ba, \bb, \bc$ are distinct vertices of $\Lrq$.
Note that
$
\pme^{\bc} \mid \lcm(\pmea,\pmeb)
$. 
Indeed, after removing the common factors, this divisibility is equivalent to 
$$\e_1\e_2\mid \lcm(\e_1^2, \e_2^2)\,,$$
and can be verified using \eqref{e:divide-lcm}. 

Now let $r=3$ or $r=4$, and $q\ge 3$. Then $s=2$, and if one sets
$$\pmea={\e_1}^{r-2}{\e_2}^{2}, \quad 
\pmeb={\e_1}^{r-1}{\e_3} \qand
\pme^{\bc}={\e_1}^{r-2}{\e_2}\e_3,
$$ then we see that $\pme^{\bc} \mid \lcm(\pmea,\pmeb)$. Indeed, after removing the common factors, this divisibility is equivalent to 
$$
\e_2\e_3\mid \lcm(\e_2^2, \e_1\e_3)\,,
$$
and can be verified using \eqref{e:divide-lcm}. 

In both cases, we have $\ba,\bb, \bc\in F_1^r$ and $\bc\notin
G_1^r$. The divisibility $\pme^{\bc} \mid \lcm(\pmea,\pmeb)$
establishes the conclusion of the Claim.

We finish the proof of (2)$\implies$(3) as follows.  We have 
\begin{equation}
\label{e:partial}
\partial(e)=\sum_{F\in \Lrq, |F|=f} \alpha_F\partial(e_{F})=\sum_{F\in \Lrq, |F|=f} \alpha_F\left(\sum_{\bc'\in F}\pm \frac{M_F}{M_{F\smallsetminus \{\bc'\}}}e_{F\smallsetminus \{\bc'\}}\right)\,.
\end{equation}
Let $\bc $ be as in the Claim and let $F\in \Lrq$ with $|F|=f$. By \cref{e:star}, we have $F=F_j^r$ for some $j$.  If $\bc'\in F$, observe
\begin{equation}
\label{e=}
F_1^r\smallsetminus \{\bc\}=F\smallsetminus\{\bc'\}\iff \text{ $F_1^r=F$ and $\bc=\bc'$.}
\end{equation}
To prove this statement, we refer to \cref{sec:Lrq} for basic properties of the sets $F_i^r$, $G_i^r$ and $B^r$. Indeed, \eqref{e=} is clear when  $r=3$, since  $F_1^r=B^r=F_j^r$ for all $j\in [q]$ in 
this case.  Assume now $r>3$, and recall that $q\ge 2$.  Assuming $F_1^r\smallsetminus \{\bc\}=F_j^r\smallsetminus\{\bc'\}$ and $j\ne 1$, we use  $F_1^r\cap F_j^r=B^r$, and conclude
$
F_1^r=B^r\cup \{\bc\}$. 
Since $B^r\cap G_1^r=\emptyset$ and $\bc\notin G_1^r$, this contradicts the fact that $F_1^r\cap G_1^r\ne\emptyset$ when $r>3$ and $q\ge 2$. Thus, \eqref{e=} must hold. 

In view of the Claim, \eqref{e=} and \eqref{e:partial}, we see that the coefficient of $e_{F_1^r\smallsetminus \{\bc\}}$ in $\partial(e)$ is equal to  $\pm \alpha_{F_1^r}$ hence it is a unit. This contradicts \eqref{e:min}.

  (3)$\implies (1)$: \cref{t:main} shows that $\Lrq$ supports a
  free resolution of $\Erq$ for all $r,q \ge 1$.  To show minimality, according to
  \cref{t:BPS}, it suffices to show that $M_{\sigma}\ne M_{\sigma'}$
  for any faces $\sigma, \sigma'\in \Lrq$ with $\sigma\ne \sigma'$,
  or, in other words, that each monomial label appears only once.
  
  If $q=1$, then $I^r=({m_1}^r)$ for all $r$, and all complexes in
  \eqref{e:inclusions} are one point, so each supports a minimal
  resolution by default.
    
 If $q=2$, then $\E_2=(x_{1}x_{12},x_{2}x_{12})$, and  $\LL^2(\E_2)$,
 $\LL^3(\E_2)$, and $\LL^4(\E_2)$ are shown below. Observe that each monomial
 label appears once in each complex, hence $\LL^2(\E_2)$,
 $\LL^3(\E_2)$, and $\LL^4(\E_2)$ support a minimal free resolution of
 ${\E_2}^2$, ${\E_2}^3$ and ${\E_2}^4$, respectively.

\begin{center}
\tiny{
\begin{tabular}{ccc}
\begin{tikzpicture}
\tikzstyle{point}=[inner sep=0pt]
\node (a)[point,label=left:${x_{1}}^2{x_{12}}^2$] at (0,2) {};
\node (b)[point,label=left:${x_{1}}{x_{2}}{x_{12}}^2$] at (0,1) {};
\node (c)[point,label=below:${x_{2}}^2{x_{12}}^2$] at (0,0) {};
\node (g)[label=right: ${x_{1}}^2{x_{2}}{x_{12}}^2$] at (0,1.5) {};
\node (e)[label=right:  ${x_{1}}{x_{2}}^2{x_{12}}^2$] at (0,.5) {};
\draw (a.center) -- (b.center);
\draw (b.center) -- (c.center);
\filldraw [black] (a.center) circle (1pt);
\filldraw [black] (b.center) circle (1pt);
\filldraw [black] (c.center) circle (1pt);
\end{tikzpicture}
&
\begin{tikzpicture}
\tikzstyle{point}=[inner sep=0pt]
\node (a)[point,label=left:${x_{1}}^3{x_{12}}^3$] at (-1,2) {};
\node (b)[point,label=left:${x_{1}}^2{x_{2}}{x_{12}}^3$] at (-1,0) {};
\node (c)[point,label=right:${x_{1}}{x_{2}}^2{x_{12}}^3$] at (1,0) {};
\node (d)[point,label=right:${x_{2}}^3{x_{12}}^3$] at (1,2) {};
\node (g)[label=right: ${x_{1}}{x_{2}}^3{x_{12}}^3$] at (1,1) {};
\node (e)[label=left:  ${x_{1}}^3{x_{2}}{x_{12}}^3$] at (-1,1) {};
\node (f)[label=below:  ${x_{1}}^2{x_{2}}^2{x_{12}}^3$] at (0,0) {};
\draw (a.center) -- (b.center);
\draw (b.center) -- (c.center);
\draw (c.center) -- (d.center);
\filldraw [black] (a.center) circle (1pt);
\filldraw [black] (b.center) circle (1pt);
\filldraw [black] (c.center) circle (1pt);
\filldraw [black] (d.center) circle (1pt);
\end{tikzpicture}
&
\begin{tikzpicture}
\tikzstyle{point}=[inner sep=0pt]
\node (a)[point,label=left:${x_{1}}^4{x_{12}}^4$] at (-2,2) {};
\node (b)[point,label=left:${x_{1}}^3{x_{2}}{x_{12}}^4$] at (-2,0) {};
\node (c)[point,label=right:${x_{1}}{x_{2}}^3{x_{12}}^4$] at (2,0) {};
\node (d)[point,label=right:${x_{2}}^4{x_{12}}^4$] at (2,2) {};
\node (g)[label=right: ${x_{1}}{x_{2}}^4{x_{12}}^4$] at (2,1) {};
\node (e)[label=left:  ${x_{1}}^4{x_{2}}{x_{12}}^4$] at (-2,1) {};
\node (f)[label=above:  ${x_{1}}^2{x_{2}}^2{x_{12}}^4$] at (0,0) {};
\node (g)[label=below:  ${x_{1}}^3{x_{2}}^2{x_{12}}^4$] at (-1,0) {};
\node (h)[label=below:  ${x_{1}}^2{x_{2}}^3{x_{12}}^4$] at (1,0) {};
\draw (a.center) -- (b.center);
\draw (b.center) -- (f.center);
\draw (f.center) -- (c.center);
\draw (c.center) -- (d.center);
\filldraw [black] (a.center) circle (1pt);
\filldraw [black] (b.center) circle (1pt);
\filldraw [black] (c.center) circle (1pt);
\filldraw [black] (d.center) circle (1pt);
\filldraw [black] (f.center) circle (1pt);
\end{tikzpicture}\\

$\LL^2(\E_2)$&$\LL^3(\E_2)$& $\LL^4(\E_2)$
\end{tabular} }
\end{center} 

Now assume $q\geq 3$ and $1 \leq r \leq 2$. We will show that, for
every $\bc \in \Nrq$ and $\sigma\in \Lrq$,
\begin{equation}\label{e:sigma} \pmec \mid M_\sigma \iff \bc
    \in \sigma.
  \end{equation}

 When $r=1$ observe that for every $i \in [q]$,
  \begin{equation}
\label{e:e}
\e_i\nmid \lcm(\e_{k_1}, \dots, \e_{k_p}) \qforall i\in
  [q], \quad k_1, \dots, k_p\in [q]\smallsetminus \{i\}.
  \end{equation}
  This can be seen by noting that the right-hand side of
  \eqref{e:divide-lcm} becomes $1\le 0$ for $A=\{i\}$, $\pmec=\e_i$
  and $\sigma=\{\e_{k_1}, \dots, \e_{k_p}\}$. Therefore
  $\LL^1_q=\LL^1(\E_q)=\Taylor(\E_q),$ and \eqref{e:sigma} follows
  immediately.

  Now let $r=2$, $q\geq 3$, $\sigma \in \LL^2_q$ and $\bc \in \N_q^2$, with 
  $\pmec\mid M_\sigma$. Since $r=2$, $\pmec=\e_i\e_j$ for some $i,j \in [q]$.
  Pick $A=\{i,j\}$ in the right-hand side of \eqref{e:divide-lcm}.
  
  \begin{itemize}
 \item If $i=j$, then $\pmec={\e_i}^2$ and $A=\{i\}$. By
   \eqref{e:divide-lcm} we must have $\bc\in \sigma$.

 \item If $i \neq j$, then by \eqref{e:divide-lcm} there exists $\bb
   \in \sigma$ with
 $$\pmeb=\e_i\e_j, \ {\e_i}^2 \mbox{ or } {\e_j}^2.$$ If
   $\pmeb=\e_i\e_j$ then $\bc=\bb\in\sigma$, as desired.  Suppose $\pmec \notin \sigma$, so without loss of generality 
   $\pmeb={\e_i}^2$.  As $\sigma\in \LL^2_q$, we must
   have $\sigma\subseteq G_i^2$. So there exist $k_1, \dots, k_p$
   in $[q]\smallsetminus \{i,j\}$ such that
$$M_\sigma=\lcm({\e_i}^2, \e_i\e_{k_1}, \dots,
  \e_i\e_{k_p})=\e_i\lcm(\e_i,\e_{k_1}, \dots, \e_{k_p})\,.
$$
Since $\e_i\e_j\mid M_\sigma$, it follows that $\e_j\mid \lcm(\e_i,\e_{k_1}, \dots, \e_{k_p})$, which contradicts \eqref{e:e}. Thus $\bc \in \sigma$.

  \end{itemize}
  
 We have now proved the statement in~\eqref{e:sigma}, and by
 \cref{t:BPS} we conclude that $\LL^2_q$ supports a minimal free
 resolution of ${\E_q}^2$ for every $q\geq 3$.
 
\end{proof}

A direct consequence of our work in  \cref{p:minimal} is the following statement.

\begin{proposition}[{\bf When $\Lrq$ supports a  minimal resolution of some $I^r$}]\label{p:last}
  If  $r$ and $q$ are positive integers, then $\Lrq $ supports a minimal free resolution of
   $I^r$ for some ideal $I$ minimally generated by $q$ square-free monomials if and only if one of the following holds:
\begin{itemize}
\item  $q=1$ and $r\ge 1$;
\item  $q=2$ and $1\le r\le 4$;
\item  $q\ge 3$ and $1\le r\le 2$.
\end{itemize}
\end{proposition}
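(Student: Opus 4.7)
The plan is to deduce this proposition directly from the two main tools already established: \cref{p:minimal}, which characterizes when $\Lrq$ supports a minimal free resolution of the specific ideal $\Erq$, and \cref{t:extremal-betti}, which says that the extremal ideals $\E_q$ realize the maximum possible Betti numbers among $r^{th}$ powers of square-free monomial ideals on $q$ generators. The key observation is that, once these two results are in hand, the equivalence in \cref{p:last} is essentially a ``squeeze'' argument at the level of Betti numbers versus the face count of $\Lrq$.

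For the backward direction, suppose $(r,q)$ satisfies one of the three listed conditions. Then I would simply take $I=\E_q$, which by \cref{d:extremal} is minimally generated by $q$ square-free monomials, and invoke \cref{p:minimal} to conclude that $\Lrq$ supports a minimal free resolution of $\Erq=I^r$.

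For the forward direction, assume $\Lrq$ supports a minimal free resolution of $I^r$ for some $I$ minimally generated by $q$ square-free monomials. Let $f_t$ denote the number of $t$-dimensional faces of $\Lrq$. Since $\Lrq$ supports a free resolution of $I^r$ by \cref{t:main}(1), the Betti numbers satisfy $\beta_t^S(I^r)\le f_t$; the assumption of minimality then upgrades this to the equality $\beta_t^S(I^r)=f_t$ for every $t\ge 0$. Applying \cref{t:main}(1) to the ideal $\E_q$ as well gives $\beta_t^{S_\E}(\Erq)\le f_t$, and \cref{t:extremal-betti} provides the comparison $\beta_t^S(I^r)\le \beta_t^{S_\E}(\Erq)$. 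Putting these together yields
$$f_t \;=\; \beta_t^S(I^r) \;\le\; \beta_t^{S_\E}(\Erq) \;\le\; f_t,$$
so equality must hold throughout. In particular $\beta_t^{S_\E}(\Erq)=f_t$ for every $t$, which is exactly the statement that the free resolution of $\Erq$ supported on $\Lrq$ is minimal. Then \cref{p:minimal} forces $(r,q)$ to satisfy one of the three listed conditions, completing the argument.

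Since everything reduces to combining \cref{p:minimal} and \cref{t:extremal-betti}, there is no genuine technical obstacle here; the only point requiring a moment of care is verifying that ``$\Lrq$ supports a minimal free resolution of $I^r$'' really does imply the numerical equality $\beta_t^S(I^r)=f_t$ (rather than merely $\le$), which is immediate from the definition of minimality via the face-indexed basis of $\mathbb F_{\Lrq}$ described in \eqref{e:Taylor-diff}.
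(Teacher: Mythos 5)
Your argument is correct and is essentially the paper's own proof, just written out in more detail: the paper likewise reduces the question to whether $\Lrq$ supports a minimal free resolution of $\Erq$ via the Betti number bound of \cref{t:extremal-betti}, and then invokes \cref{p:minimal}. The squeeze $f_t=\beta_t^S(I^r)\le\beta_t^{S_\E}(\Erq)\le f_t$ is exactly the implicit content of the paper's one-line reduction.
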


\begin{proof} 
For any square-free monomial ideal
$I$ with $q$ minimal generators the Betti numbers of $I^r$ are
  bounded above by the  Betti numbers of
  $\Erq$ by \cref{t:extremal-betti}. So the question is reduced to when
  $\Lrq = \LL ^r (\E_q)$ supports a minimal free resolution of $\Erq$. The
  rest follows from \cref{p:minimal}.
\end{proof}

When $r=2$, the
fact that these bounds are sharp had been previously announced in
\cite{L2}. The search for sharp(er) bounds when $r>2$ is
continued in \cite{MFO}.

\subsubsection*{Acknowledgments} 

The research leading to this paper was initiated during the week long
workshop ``Women in Commutative Algebra'' (19w5104) which took place
at the Banff International Research Station (BIRS). The authors would
like to thank the organizers and acknowledge the
hospitality of BIRS and the additional support provided by the
National Science Foundation, DMS-1934391.

The authors are extremely grateful to the anonymous referee for their careful
reading of this paper, excellent suggestions, as well as detecting and
correcting a mistake in the proof of
\cref{p:irredundant-gens-Ir}. 

For this work Liana \c Sega was supported in part by a grant from the
Simons Foundation (\#354594), and Susan Cooper and Sara Faridi were
supported by the Natural Sciences and Engineering Research Council of
Canada (NSERC).

The computations for this project were done using
the computer algebra program Macaulay2~\cite{M2}.

For the last author, this material is based upon work
  supported by and while serving at the National Science
  Foundation. Any opinion, findings, and conclusions or
  recommendations expressed in this material are those of the authors
  and do not necessarily reflect the views of the National Science
  Foundation.


\end{document}